\numberwithin{equation}{section}
\newtheorem{theorem}{Theorem}[section]
\newtheorem*{theorem*}{Theorem}
\newtheorem{lemma}[theorem]{Lemma}
\newcommand{\DD}{\mathbb{D}}
\newcommand{\RR}{\Rightarrow}
\newtheorem{proposition}[theorem]{Proposition}
\newtheorem{corollary}[theorem]{Corollary}
\theoremstyle{definition}
\newtheorem{definition}[theorem]{Definition}
\newtheorem{remark}[theorem]{Remark}
\theoremstyle{definition}
\newtheorem{exmp}[theorem]{Example} 
\theoremstyle{definition}
\newtheorem{question}[theorem]{Question}
\begin{document}

\title[\resizebox{3.9in}{!}{  \large INTERPOLATING SEQUENCES FOR PAIRS OF SPACES}]{\large INTERPOLATING SEQUENCES FOR PAIRS OF SPACES}

\author[\resizebox{1.35in}{!}{\small GEORGIOS TSIKALAS}]{\small GEORGIOS TSIKALAS}
\thanks{Partially supported by National Science Foundation Grant DMS 2054199}

\address{DEPARTMENT OF MATHEMATICS AND STATISTICS, WASHINGTON UNIVERSITY IN ST. LOUIS, ST. LOUIS, MO, 63136}
\email{gtsikalas@wustl.edu} 
\subjclass[2010]{46E22} 
\keywords{interpolating sequences, pairs of spaces, complete Pick space, complete Pick factor, weak separation}
\small
\begin{abstract}
    \small
    We characterize interpolating sequences for pairs of reproducing kernels $(s, \ell)$, where $s$ is a complete Pick factor of $\ell.$ This answers a question of Aleman, Hartz, M\raise.5ex\hbox{c}Carthy and Richter.

\end{abstract}
\maketitle

 \section{INTRODUCTION} 
\small
\subsection{The single kernel setting}
\large 
Let $\mathcal{H}$ denote a reproducing kernel Hilbert space on a nonempty set $X.$  Let $\text{Mult}(\mathcal{H})$ denote the multiplier algebra of $\mathcal{H},$ that is the set of all functions $\phi$ on $X$ that multiply $\mathcal{H}$ into itself. A sequence $\{\lambda_i\}\subset X$ is called an \textit{interpolating sequence for} $\text{Mult}(\mathcal{H})$ ((IM) for short) if, whenever $\{w_i\}\subset\ell^{\infty},$ there exists a multiplier $\phi$ such that $\phi(\lambda_i)=w_i$ for all $i$. Consider also the following \textit{weighted restriction operator} associated to $\{\lambda_i\}\subset X$
$$T: f\mapsto \bigg( \frac{f(\lambda_i)}{||k_{\lambda_i}||}\bigg) ,$$
which maps $\mathcal{H}$ into the space of all complex sequences. 
$\{\lambda_i\}$ is called an \textit{interpolating sequence for} $\mathcal{H}$ ((IH) for short) if $T(\mathcal{H})=\ell^2.$
\par In general, the set of all $\text{Mult}(\mathcal{H})$-interpolating sequences will be a strict subset of the set of all $\mathcal{H}$-interpolating sequences. However, these two classes turn out to coincide in many well-studied reproducing kernel Hilbert spaces. In particular, a class of spaces that share this property is the class of all \textit{complete Pick spaces}. A result of Agler-M\raise.5ex\hbox{c}Carthy \cite{CNPkernels} says that a reproducing kernel $s$ is an \textit{irreducible complete Pick kernel} (see Section \ref{2} for details) if it has the form 
\begin{equation}  s_w(z)=\frac{f(z)\overline{f(w)}}{1-\langle b(z), b(w) \rangle_{\mathcal{K}}}, \label{1.1} \end{equation} where $f: X\to\mathbb{C}$ is non-vanishing and $b$ is a function from X into the open
unit ball of an auxiliary Hilbert space $\mathcal{K}.$
 If, in addition, f is identically $1$ and
there is a point $w_0\in X$ such that $b(w_0)=0$, we will call (\ref{1.1}) a \textit{normalized
complete Pick kernel}. This is a rather standard assumption that is convenient but not essential for our proofs. The Hardy and Dirichlet spaces of the unit disk $\mathbb{D}$ are examples of spaces with normalized complete Pick kernels. We provide further examples and background on these kernels in Section \ref{2}.
\par Interpolating sequences are often characterized by appropriate separation and Carleson
measure conditions. If $\mathcal{H}_k$ is a reproducing kernel Hilbert space with kernel $k$, then 
$$d_k(z, w)=\sqrt{1-\frac{|\langle k_z, k_w\rangle|^2}{||k_z||^2||k_w||^2}}, \hspace{0.2 cm} z,w\in X,$$
defines a pseudometric on $X$ (see \cite[Lemma 9.9]{Pick}). Actually, $d_k$ is a metric on $X$ if and only if no two kernel functions $k_z$, $k_w$ (with $z\neq w$) are linearly dependent. In general, not much is known about $d_k$ and many natural questions remain open (see \cite{DistanceF}). In the setting of the Hardy space, $d_k$ is precisely the pseudohyperbolic metric on the unit disk. The
sequence $\{\lambda_i\}\subset X$ is said to be \textit{weakly separated by} $k$
if there exists $\epsilon>0$ such that 
\begin{equation} d_k(\lambda_i, \lambda_j)\ge \epsilon, \hspace{0.2 cm} \text{ for all } i\neq j. \tag{WS} \end{equation}

\par We also say that $\{\lambda_i\}$ satisfies the \textit{Carleson measure condition for $\mathcal{H}_k$} if there
exists $C>0$ such that
\begin{equation}
   \sum_{i=1}^{\infty}\frac{|f(\lambda_i)|^2}{||k_{\lambda_i}||^2} \le C ||f||^2_{\mathcal{H}_k}, \hspace{0.3 cm} \forall f\in\mathcal{H}_k.
    \tag{CM}
\end{equation}
Letting $\delta_{\lambda}$ denote the unit point mass at $\lambda\in X$, we see that (CM) is equivalent to $\mu=\sum_{i=1}^{\infty}\frac{1}{||k_{\lambda_i}||^2}\delta_{\lambda_i} $ being a Carleson measure for the Hilbert space $\mathcal{H}_k.$ \par 
 \par
Carleson \cite{Carleson} and Shapiro-Shields \cite{ShapiroShields} proved that (IM), (IH) and (CM)+\\(WS)  all coincide in the setting of the Hardy space on $\mathbb{D}$. Bishop \cite{Bishop} and Marshall-Sundberg \cite{MarshallSundberg} showed that this is still the case if the Hardy space is replaced by the Dirichlet space on $\mathbb{D}.$
\par As already stated, (IH) and (IM) continue to be equivalent in any complete Pick space. Also, it is not hard to see that the implication (IH)$\Rightarrow$ (CM)+(WS) is valid in every reproducing kernel Hilbert space. The question whether the converse always holds true in a complete Pick space (first formulated by Agler-M\raise.5ex\hbox{c}Carthy in \cite[Question 9.57]{Pick} and by Seip in \cite[Conjecture 1, p. 33]{Seip}) remained open for more than ten years.  It was finally given an affirmative answer by Aleman, Hartz, M\raise.5ex\hbox{c}Carthy and Richter in the breakthrough paper \cite{InterpolatingPick}, as a consequence of the positive solution of the Kadison-Singer problem \cite{Kadison}. An alternative proof, using the \textit{column-row property} for complete Pick spaces, can be found in \cite{Column-Row}. See also \cite{Boe} for partial progress regarding this problem prior to \cite{InterpolatingPick}.

\small
\subsection{Pairs of kernels}
\large

In this work, we will be concerned with the concept of interpolating sequences for multipliers
between spaces, which we now define. 
\par 
Let  $k, \ell$ be two reproducing kernels on a set $X$ such that $k_z, \ell_z\neq 0 $ for all
$z\in X$. We will denote the corresponding reproducing kernel Hilbert spaces
by $\mathcal{H}_k$ and $\mathcal{H}_{\ell}$. If $\phi\in\text{Mult}(\mathcal{H}_k, \mathcal{H}_{\ell})$, then $\phi\cdot k_z\in \mathcal{H}_{\ell}$ and so the function $\phi$ satisfies a growth estimate:
\begin{equation} \label{17} |\phi(z)|=\frac{|\phi(z)k_z(z)|}{||k_z||^2}=\frac{\langle \phi k_z, \ell_z \rangle}{||k_z||^2}\le ||\phi||_{\text{Mult}(\mathcal{H}_k, \mathcal{H}_{\ell})}\frac{||\ell_z||}{||k_z||}, \hspace{0.3 cm} \forall z\in X,  \end{equation}
where $|\phi||_{\text{Mult}(\mathcal{H}_k, \mathcal{H}_{\ell})}$ denotes the norm of the multiplication operator $M_{\phi}: \mathcal{H}_{k}\to \mathcal{H}_{\ell}$.  A sequence $\{\lambda_i\}\subset X$ will be called an \textit{interpolating sequence for $\text{Mult}(\mathcal{H}_{k}, \mathcal{H}_{\ell})$} if, whenever $\{w_i\}\subset\ell^{\infty},$ there exists a multiplier $\phi\in\text{Mult}(\mathcal{H}_{k}, \mathcal{H}_{\ell})$ such that $\phi(\lambda_i)=w_i\frac{||\ell_{\lambda_i}||}{||k_{\lambda_i}||}$ for all $i$.
\par 
Aleman, Hartz, M\raise.5ex\hbox{c}Carthy and Richter investigated interpolating sequences for pairs of spaces in \cite{InterpolatingPick}. For an arbitrary pair $(k, \ell)$, it can be shown that 
$\text{Mult}(\mathcal{H}_{k}, \mathcal{H}_{\ell})$-interpolating sequences satisfy 
the Carleson measure condition (CM) for $\mathcal{H}_k$ and are weakly separated by $\ell$. One does not expect these two conditions to also be sufficient in general. But what if, in addition, we assume  $k$ to be a \textit{complete Pick factor} of $\ell$? This is the case, for example, whenever $\mathcal{H}_k$ is the Hardy space on $\mathbb{D}$ and the operator $M_z$ of multiplication by the coordinate function defines a contraction operator on $\mathcal{H}_{\ell}.$
\begin{question}[Aleman, Hartz, M\raise.5ex\hbox{c}Carthy and Richter \cite{InterpolatingPick}] \label{4}
\textit{Let $s$ be a normalized complete Pick kernel on $X$ and let $\ell=gs$, where $g$ is a kernel on $X.$ Is it true that a sequence $\{\lambda_i\}\subset X$
is interpolating for $\text{Mult}(\mathcal{H}_{s}, \mathcal{H}_{\ell})$ if and only if it satisfies the Carleson measure condition for $\mathcal{H}_s$ and is weakly separated by $\ell$?}
\end{question}
Aleman, Hartz, M\raise.5ex\hbox{c}Carthy and Richter  were able to give a positive answer \cite[Theorem 1.3]{InterpolatingPick} to Question \ref{4} under the extra assumption that $\ell$ is a power\footnote{Notice that, by \cite[Remark 8.10]{Pick} and the Schur product theorem, the expression $s^t_w(z)$ defines a reproducing kernel whenever $s$ is a
normalized complete Pick kernel and $t>0$.} of a complete Pick kernel. 
\begin{theorem}[Aleman, Hartz, M\raise.5ex\hbox{c}Carthy and Richter \cite{InterpolatingPick}] \label{400}
Let $s_1, s_2$ be normalized complete Pick kernels on $X$ such that $s_2/s_1$ is
positive semi-definite, and let $t\ge 1$. Then, a sequence is interpolating for $\text{Mult}(\mathcal{H}_{s_1}, \mathcal{H}_{s^t_2})$ if and only if it satisfies the
Carleson measure condition for $\mathcal{H}_{s_1}$ and is weakly separated by $s^t_2$ (equivalently, by $s_2$).
\end{theorem}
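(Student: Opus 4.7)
The forward direction---interpolating implies (CM) for $s_1$ and (WS) by $s_2^t$---is a general fact about any pair of kernels and is routine. Given interpolation, the open mapping theorem produces a bounded linear selection of multipliers; combined with the growth estimate (\ref{17}), a duality argument yields the Carleson measure estimate for $s_1$, while a two-point Pick-type computation forces weak separation by $s_2^t$. I would dispose of this direction first.

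For the converse, assume (CM) for $s_1$ and (WS) by $s_2^t$. A direct computation using $(1-r^t) \asymp (1-r)$ on $r \in [0,1)$ for fixed $t > 0$ shows that weak separation by $s_2^t$ is equivalent to weak separation by $s_2$, so I would henceforth work with the latter. The central engine to invoke is the single-kernel theorem in complete Pick spaces from \cite{InterpolatingPick}: a sequence is $\text{Mult}(\mathcal{H}_s)$-interpolating if and only if it satisfies (CM) for $s$ and (WS) by $s$. The structural tool available in the pair setting is the factorization
\[
s_2^t = h \cdot s_1, \qquad h := s_2^{t-1} q,
\]
where $q$ is the PSD kernel with $s_2 = q s_1$ and $s_2^{t-1}$ is PSD for $t \geq 1$ (real powers of normalized CP kernels being PSD); Schur's theorem then gives that $h$ is PSD. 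This realizes $s_1$ as a complete Pick factor of $\ell := s_2^t$, and the Ball--Trent--Vinnikov-style factorization theorem for CP multipliers identifies contractive elements of $\text{Mult}(\mathcal{H}_{s_1}, \mathcal{H}_\ell)$ with $\mathcal{H}_h$-valued analytic representations. My plan is to first solve a scalar interpolation problem using the single-kernel CP theorem for $s_1$ (modulo the separation issue below) and then dress the resulting multiplier by an $\mathcal{H}_h$-valued function to absorb the growth factor $\|\ell_{\lambda_i}\|/\|s_{1,\lambda_i}\| = \sqrt{h(\lambda_i, \lambda_i)}$ intrinsic to the pair-interpolation problem.

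The main obstacle will be bridging the gap between weak separation by $s_2$ (which we have) and weak separation by $s_1$ (which the single-kernel CP theorem for $s_1$ would require): Cauchy--Schwarz applied to $q$ yields $d_{s_1} \leq d_{s_2}$, so weak separation by $s_2$ is strictly weaker. To close this gap I would try a dyadic partition of $\{\lambda_i\}$ according to the scale $h(\lambda_i, \lambda_i)$, handling each scale via the CP interpolation theorem applied to an appropriately reweighted problem and then patching the pieces. Alternatively---and likely closer to the AHMR strategy---one formulates the interpolation problem as the surjectivity of a concrete bounded operator and verifies it directly by combining the Carleson estimate for $s_1$, off-diagonal Schur-type bounds supplied by (WS) for $s_2$, and the Kadison--Singer paving input already embedded in the single-kernel CP theorem.
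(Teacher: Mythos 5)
Your forward direction and preliminary observations (the equivalence of weak separation by $s_2^t$ and by $s_2$, and the PSD factorization $s_2^t = h\cdot s_1$) are correct, but the sufficiency direction stops short: both of your proposed "gap-closing" mechanisms are sketches that do not actually engage the central difficulty. The paper does not prove Theorem~\ref{400} directly (it is cited from \cite{InterpolatingPick}), but its own machinery resolves it cleanly, and the missing idea is visible there. The obstruction you identify---that (CM) for $s_1$ plus (WS) by $s_2$ does not give (WS) by $s_1$---is real, but the resolution is not a dyadic rescaling in $h$, nor is it a fresh Kadison--Singer input. It is that $s_2^t$ has the \emph{multiplier separation property}: since $s_2$ is a normalized complete Pick kernel, it has the $2$-point Pick property, so two $s_2$-weakly-separated points are separated by a contractive multiplier of $\mathcal{H}_{s_2}$, and a Schur-product argument (Lemma~\ref{104} in the paper, combined with $s_2^t/s_2$ being PSD for $t\ge1$) transports this multiplier to a contractive element of $\mathrm{Mult}(\mathcal{H}_{s_2^t})$ with value bounded below by a constant depending only on the separation parameter. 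Consequently (WS) by $s_2^t$ upgrades automatically to $n$-weak separation for every $n$ (Proposition~\ref{101}); that is, $s_2^t$ is an AS kernel.

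With that in hand, the paper's route is: decompose the sequence, via (CM) for $s_1$, into finitely many $s_1$-weakly-separated pieces; build a Blaschke-type product over each piece (using the single-kernel Pick property of $s_1$ and the $\ell^2$-summability of the off-diagonal Gram entries supplied by (CM)) to annihilate all points of the sequence except one from each piece; then use the multiplier-separation of $s_2^t$ to kill the remaining finitely many competitors, yielding \emph{strong} separation by $s_2^t$. The column--row argument (Theorem~\ref{30}) then gives a bounded linear interpolation operator. Your "dyadic partition in $h(\lambda_i,\lambda_i)$" idea is orthogonal to this---the difficulty is not the variation in the weight $\|\ell_{\lambda_i}\|/\|s_{1,\lambda_i}\|$, which is handled automatically once one phrases interpolation in the normalized target $w_i\|\ell_{\lambda_i}\|/\|s_{1,\lambda_i}\|$---and your "direct surjectivity with off-diagonal Schur bounds" alternative is a reasonable description of AHMR's single-kernel Kadison--Singer argument but does not say how the pair structure is used. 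The two genuinely missing ingredients in your proposal are: (1) the $s_2^t$-multiplier-separation/AS observation that turns weak separation into something strong enough to survive finitely many interferers, and (2) the column--row theorem for pairs \cite[Theorem~3.18]{Column-Row}, which is what converts the family of separating multipliers into a bounded interpolation operator.
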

Note that, for $s_1=s_2=s$ and $t=1$, their result recovers the characterization of  $\text{Mult}(\mathcal{H}_{s})$-interpolating sequences in the setting of the complete Pick space $\mathcal{H}_s.$ 
\par

\par

\small
\subsection{Main results}
\large

\par

In Section \ref{5}, we provide a complete characterization of $\text{Mult}(\mathcal{H}_{s}, \mathcal{H}_{\ell})$-interpolating sequences, thus extending Theorem \ref{400}. Surprisingly, the conditions of Question \ref{4} turn out not to be sufficient, in general, for $\text{Mult}(\mathcal{H}_{s}, \mathcal{H}_{\ell})$-interpolation. In particular, a stronger notion of weak separation is required.
\begin{definition} \label{25}
Suppose $k$ is a reproducing kernel on a nonempty set $X$ and $\{\lambda_i\}\subset X$. For any $n\ge 2$, we say that $\{\lambda_i\}$ is \textit{$n$-weakly separated by $k$} if there exists $\epsilon>0$ such that for every $n$-point subset $\{\mu_1, \dots, \mu_n\}\subset\{\lambda_i\}$ we have
$$\text{dist}\bigg(\frac{k_{\mu_1}}{||k_{\mu_1}||}, \text{ span}\bigg\{\frac{k_{\mu_2}}{||k_{\mu_2}||}, \dots, \frac{k_{\mu_n}}{||k_{\mu_n}||}\bigg\} \bigg)\ge \epsilon. $$\end{definition}
Notice that $2$-weak separation by $k$ coincides with weak separation by $k$. \par 
We can now state our first result.
\begin{theorem} \label{7}
Suppose $s$ is a normalized complete Pick kernel and $\ell=gs$ for some (positive semi-definite) kernel $g$. Then, a sequence $\{\lambda_i\}\subset X$
is interpolating for $\text{Mult}(\mathcal{H}_{s}, \mathcal{H}_{\ell})$ if and only if it satisfies the Carleson measure condition for $\mathcal{H}_s$ and is $n$-weakly separated by $\ell$, for every $n\ge 2.$
\end{theorem}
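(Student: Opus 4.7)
For the ``only if'' direction the Carleson measure condition is the standard consequence, recalled in the introduction, that $\text{Mult}(\mathcal{H}_s,\mathcal{H}_\ell)$-interpolation implies (CM) for $\mathcal{H}_s$. For $n$-weak separation by $\ell$, fix any $n$-tuple $\{\mu_1,\dots,\mu_n\}\subset\{\lambda_i\}$ and apply the interpolation hypothesis to the data $w_1=1$, $w_j=0$ for $j\ge 2$, producing $\phi\in\text{Mult}(\mathcal{H}_s,\mathcal{H}_\ell)$ with $\phi(\mu_1)=\|\ell_{\mu_1}\|/\|s_{\mu_1}\|$, $\phi(\mu_j)=0$ for $j\ge 2$, and $\|\phi\|_{\text{Mult}(\mathcal{H}_s,\mathcal{H}_\ell)}\le M$, where $M$ is the interpolation constant. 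The adjoint identity $M_\phi^\ast \ell_z=\overline{\phi(z)}\,s_z$ then shows that $M_\phi^\ast$ annihilates $V:=\text{span}\{\ell_{\mu_j}:j\ge 2\}$ while $\|M_\phi^\ast \ell_{\mu_1}\|=\|\ell_{\mu_1}\|$. Decomposing $\ell_{\mu_1}=P_V\ell_{\mu_1}+(I-P_V)\ell_{\mu_1}$ yields $\text{dist}(\ell_{\mu_1}/\|\ell_{\mu_1}\|,\,V)\ge 1/M$, a bound independent of $n$ and of the particular subset chosen.

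For the ``if'' direction I would follow the template of \cite{InterpolatingPick}. First apply the Marcus--Spielman--Srivastava theorem (the Kadison--Singer solution) to the Carleson measure $\mu=\sum_i\|s_{\lambda_i}\|^{-2}\delta_{\lambda_i}$ on $\mathcal{H}_s$, partitioning $\{\lambda_i\}=\bigsqcup_{k=1}^{N}\{\lambda_i\}_{i\in I_k}$ into finitely many subsequences each of which carries an arbitrarily small Carleson constant. On each piece the plan is to construct a ``dual system'' $\{\phi_i\}_{i\in I_k}\subset\text{Mult}(\mathcal{H}_s,\mathcal{H}_\ell)$ with $\phi_i(\lambda_j)=\delta_{ij}\|\ell_{\lambda_i}\|/\|s_{\lambda_i}\|$ for $i,j\in I_k$ and uniform control on $\bigl\|\sum_{i\in I_k}M_{\phi_i}M_{\phi_i}^\ast\bigr\|$; any $\ell^\infty$ data $\{w_i\}$ is then interpolated by $\sum_i w_i\phi_i$, with convergence in the strong operator topology.

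The construction of the $\phi_i$ is where the new hypothesis enters. Since $s$ is a normalized complete Pick kernel, a Pick--type theorem for pair multipliers reduces the finite-dimensional interpolation problem on a subset $F\subset\{\lambda_j\}$ to positivity of a matrix of the form $\bigl[(C^2 g(\mu_i,\mu_j)-a_i\overline{a_j})\,s(\mu_i,\mu_j)\bigr]_{\mu_i,\mu_j\in F}$, where $g=\ell/s$. The $n$-weak separation by $\ell$ yields a lower bound, uniform in $|F|$, on the Gram matrix $\bigl[\ell(\mu_i,\mu_j)/(\|\ell_{\mu_i}\|\|\ell_{\mu_j}\|)\bigr]$, and I expect that, combined with the smallness of the Carleson constant on the given Kadison--Singer piece, this forces the Pick matrix above to be positive for a constant $C$ that depends only on the $n$-weak separation constant. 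The principal obstacle is precisely this step: in \cite{InterpolatingPick} the hypothesis that $\ell$ is (a power of) a normalized complete Pick kernel furnishes additional structure on $\ell$ that makes the analogous estimate automatic, whereas here that structure is absent and the full force of the $n$-weak separation (uniform in $n$) must do this work. Once the finite-dimensional estimate is in place, passing to the infinite sequence on each piece and summing across the $N$ Kadison--Singer pieces follows the standard lines of \cite[Sections~4--6]{InterpolatingPick}.
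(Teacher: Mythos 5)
Your forward direction is correct and is in fact more elementary than the paper's. The paper deduces it from a general reformulation (its Theorem \ref{30}): interpolating for $\text{Mult}(\mathcal{H}_s,\mathcal{H}_\ell)$ $\Longleftrightarrow$ (CM) for $\mathcal{H}_s$ plus \emph{strong} separation by $\ell$, with strong separation obtained from the Open Mapping Theorem and its Lemma \ref{18}(b). Your argument via $M_\phi^*\ell_z=\overline{\phi(z)}\,s_z$ and $\|M_\phi^*\ell_{\mu_1}\|=\|\ell_{\mu_1}\|\le M\,\|(I-P_V)\ell_{\mu_1}\|$ gives the same conclusion directly, with the same uniform constant $1/M$, and is perfectly good.

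The reverse direction has a genuine gap, and it is exactly the one you flag yourself. Your plan reproduces the Kadison--Singer/Marcus--Spielman--Srivastava strategy from \cite{InterpolatingPick}: partition into pieces of small Carleson constant, build a dual system $\{\phi_i\}$ on each piece, and sum. The step you label ``the principal obstacle'' --- showing that $n$-weak separation by $\ell$ plus a small Carleson constant forces positivity of the pair-Pick matrix for a constant depending only on the separation constants --- is not established, and it is precisely the place where \cite{InterpolatingPick} leaned on $\ell$ being a power of a complete Pick kernel (via a Leech-type factorization) rather than an arbitrary kernel with a complete Pick factor. Without an argument there, the proof is incomplete; ``I expect that\dots'' is not a proof.

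The paper takes a different route that avoids this entirely. Its key technical ingredient is the column--row property for spaces with a complete Pick factor (\cite[Theorem 3.18]{Column-Row}), which yields Theorem \ref{30}: Mult-interpolating $\Longleftrightarrow$ (CM) $+$ strong separation by $\ell$. With that reduction in hand, one only needs to upgrade ``(CM) $+$ $n$-weak separation for all $n$'' to strong separation by $\ell$. For this the paper does \emph{not} invoke Kadison--Singer at all: it uses the elementary \cite[Proposition 9.11]{Pick} to write the sequence as a \emph{finite} union of sequences weakly separated by $s$, then builds an explicit Blaschke-type product of contractive $\text{Mult}(\mathcal{H}_s)$-multipliers killing all but finitely many points (uniformly, via the (CM) bound), and finally multiplies by the $\text{Mult}(\mathcal{H}_s,\mathcal{H}_\ell)$-multiplier furnished by $n$-weak separation and the Pick property of the pair $(s,\ell)$ (Lemma \ref{18}(a)). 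The product is a contractive multiplier peaking at one point and vanishing at all the others, with a value bounded below by a constant depending only on the Carleson and separation constants --- exactly strong separation. If you want to complete your proof rather than switch strategies, you would need to supply the Pick-matrix estimate you conjectured; but the column--row route is cleaner and is what makes the generality of Theorem \ref{7} tractable.
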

Passing to $n$-weak separation is a necessity and not merely an artifact of the proof of Theorem \ref{7}, as the following result shows.

\begin{theorem} \label{8}
There exists a kernel $\ell$ with a normalized complete Pick factor $s$ and the following property: \\
For every $n\ge 2$, there exists a sequence $\{\lambda_i\}\subset X$ that satisfies the Carleson measure condition for $\mathcal{H}_s$ and is $n$-weakly separated, but not $(n+1)$-weakly separated by $\ell$ (and hence, not $\text{Mult}(\mathcal{H}_{s}, \mathcal{H}_{\ell})$-interpolating). 
\end{theorem}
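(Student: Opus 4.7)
\emph{Proof plan.} The idea is to construct a single pair $(s,\ell)$ with $s$ a normalized complete Pick factor of $\ell$, and, for each $n\ge 2$, to exhibit a separate sequence. The kernel $\ell$ will come from a vector-valued Hardy construction, which supplies orthogonal ``blocks'' in the geometry of normalized kernels large enough to accommodate a near-linear dependence of rank exactly $n+1$ without inducing any lower-rank degeneracy.

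Let $K$ be a separable infinite-dimensional Hilbert space with unit sphere $S$, and take $X=\mathbb{D}\times S$. Define
\[s((z,\xi),(w,\eta)):=\frac{1}{1-z\bar w},\qquad g((z,\xi),(w,\eta)):=\langle \xi,\eta\rangle_K,\qquad \ell:=gs.\]
Here $s$ is a normalized complete Pick kernel (the Szeg\H o kernel pulled back through the first coordinate; in the notation of (\ref{1.1}), $f\equiv 1$, $\mathcal{K}=\mathbb{C}$, $b((z,\xi))=z$), and $\mathcal{H}_s\cong H^2(\mathbb{D})$. Since $g$ is positive semi-definite, $\ell$ is a valid kernel with $s$ as a complete Pick factor. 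Using $\|\eta\|_K=1$ one gets $\|\ell_{(w,\eta)}\|^2=1/(1-|w|^2)$, and the normalized inner products factor as $\langle \tilde\ell_{(w_1,\eta_1)},\tilde\ell_{(w_2,\eta_2)}\rangle=\langle\eta_1,\eta_2\rangle_K\cdot\tilde s(w_1,w_2)$.

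Now fix $n$. Choose a Carleson sequence $\{w_k\}\subset \mathbb{D}$ for $H^2$ together with ``nearby'' points $w_k'\in \mathbb{D}$ with $\tilde s(w_k,w_k')\to 1$, arranged so that $\{w_k\}\cup\{w_k'\}$ remains Carleson. Pick mutually orthonormal vectors $\{\eta_i^{(k)}:1\le i\le n,\,k\ge 1\}\cup\{\zeta_k:k\ge 1\}$ in $K$ and scalars $\alpha_k\nearrow 1$ with $\beta_k:=\sqrt{1-\alpha_k^2}$, and put
\[\eta_{n+1}^{(k)}:=\frac{\alpha_k}{\sqrt n}\bigl(\eta_1^{(k)}+\cdots+\eta_n^{(k)}\bigr)+\beta_k\zeta_k.\]
The sequence $\{\lambda_j\}$ collects the points $(w_k,\eta_i^{(k)})$ for $1\le i\le n$ and $(w_k',\eta_{n+1}^{(k)})$ over all $k\ge 1$. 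The Carleson measure condition for $\mathcal{H}_s$ reduces to Carlesonness of the first-coordinate multiset, which holds by construction. Because the $\eta$-vectors from distinct blocks $k$ are orthogonal, the full Gram matrix of $\{\tilde\ell_{\lambda_j}\}$ is block-diagonal in $k$, so every weak-separation question reduces to a single $k$-block.

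Setting $\gamma_k:=\alpha_k\tilde s(w_k,w_k')\in(0,1)$ with $\gamma_k\to 1$, the $k$-th block Gram matrix equals
\[G_k=\begin{pmatrix}I_n & (\gamma_k/\sqrt n)\,\mathbf{1}\\ (\gamma_k/\sqrt n)\,\mathbf{1}^T & 1\end{pmatrix},\]
whose spectrum is $1+\gamma_k$, the eigenvalue $1$ with multiplicity $n-1$, and $1-\gamma_k$. The eigenvalue $1-\gamma_k\to 0$ witnesses the failure of $(n+1)$-weak separation. On the other hand, each $n\times n$ principal submatrix of $G_k$ is either $I_n$ or has the analogous form with $\|b\|^2=(n-1)\gamma_k^2/n$, hence smallest eigenvalue $1-\gamma_k\sqrt{(n-1)/n}\ge 1-\sqrt{(n-1)/n}>0$, uniformly in $k$; combined with block-diagonality this yields $n$-weak separation of $\{\lambda_j\}$ with a positive constant depending only on $n$. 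The main obstacle is precisely this eigenvalue gap: one must achieve a rank-one degeneracy of the full $(n+1)$-block while keeping every $n\times n$ principal submatrix uniformly coercive. This is what forces $K$ to be infinite-dimensional, so that a fresh orthogonal $(n+1)$-dimensional subspace is available for each block $k$.
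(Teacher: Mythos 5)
Your construction is correct, and it proves the theorem along a genuinely different path from the paper. Both proofs realize $\ell$ as $g\cdot s$ with $g(\lambda,\mu)=\langle u(\lambda),u(\mu)\rangle$ for a Hilbert-space-valued $u$; the substantial difference is where the vector coordinate lives. The paper keeps $X=\mathbb{D}$ and smuggles the vector data through a piecewise-constant map $u:\mathbb{D}\to\ell^2$, which forces it to carefully engineer disjoint hyperbolic ``packets'' in $\mathbb{D}$: for each $n$, $n$ disjoint sequences $\{\lambda^{n,k}_i\}_i$ interpolating for $\mathrm{Mult}(H^2)$, with same-index points coalescing in pseudohyperbolic distance and distinct-index points uniformly separated. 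The degeneracy of the $(n+1)$-block Gram determinant then appears only in the limit $i\to\infty$, via a Schur-product squeeze. You instead enlarge $X$ to $\mathbb{D}\times S$, making the vector coordinate genuinely free; then the blocks are exactly orthogonal, the Gram matrix is block-diagonal, and the spectral computation (eigenvalues $1\pm\gamma_k$, $1$ with multiplicity $n-1$ for $G_k$; smallest eigenvalue $1-\gamma_k\sqrt{(n-1)/n}$ for the critical $n\times n$ submatrices) does in two lines what the paper does with a multi-page construction. You do pay for this cleanliness with an ``exotic'' domain $\mathbb{D}\times S$, and the paper's version lives on $\mathbb{D}$, which makes the pathology feel closer to classical function theory. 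A few cosmetic points worth tightening if you write this up: arrange $w_k, w_k'$ on a common ray so that $\tilde s(w_k,w_k')$ is a positive real (otherwise replace $\gamma_k$ with $|\gamma_k|$ throughout; the eigenvalues are determined by $|\gamma_k|$ anyway), and note explicitly that the distance-to-span bound $\ge\sqrt{c}$ follows from $G\ge cI$, which is what justifies passing from the eigenvalue estimate to $n$-weak separation. Also, the $\alpha_k<1$ and the extra vector $\zeta_k$ are optional: $\alpha_k\equiv1$ produces exact linear dependence in each $(n+1)$-block, which still destroys $(n+1)$-weak separation and makes the construction even shorter.
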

Thus, the conditions stated in Question \ref{4} are not, in general, sufficient. A natural line of inquiry then emerges: which conditions do we need to impose on a pair $(s, \ell)$ for Question \ref{4} to have a positive answer?  We investigate this in Section \ref{6}. In particular, Theorem \ref{89} tells us that, at least for ``reasonable" pairs $(s, \ell)$, the issue lies solely with the possible existence of weakly separated sequences that are not $n$-weakly separated by $\ell$ (for some $n\ge 3$). In other words, the only obstruction to Question \ref{4} having a positive answer is that $\ell$ might not possess the following (rather peculiar) property: for any fixed $n\ge 2$, a kernel $\hat{\ell}_z$ can be ``close" to the span of $n$ other kernels $\hat{\ell}_{w_1}, \hat{\ell}_{w_2}, \dots, \hat{\ell}_{w_n}$ if and only if it is ``close" to one of them. This implies, perhaps surprisingly, that the answer to Question \ref{4} is a matter that depends entirely (at least for pairs satisfying the hypotheses of Theorem \ref{89}) on the kernel $\ell$; the specific nature of the complete Pick factor $s$ turns out to be irrelevant here. Kernels for which weak separation of a sequence is always equivalent to $n$-weak separation (for \textit{every} $n$) will be said to have the \textit{automatic separation property} (also called \textit{AS property} for short). 
\par 
 The question then becomes: which kernels have the automatic separation property? This is explored in Section \ref{301}. A first class of examples is furnished by kernels satisfying a stronger property, the  \textit{multiplier separation property}. These are kernels $\ell$ such that weak separation by $\ell$ is always equivalent to weak separation\footnote{$\{\lambda_i\}$ is said to be weakly separated by $\text{Mult}(\mathcal{H}_{\ell})$ if there exists $\epsilon>0$ such that for any two points $\lambda_i\neq \lambda_j,$ we can find $\phi_{ij}\in\text{Mult}(\mathcal{H}_{\ell})$ of norm at most $1$ satisfying $\phi_{ij}(\lambda_i)=\epsilon$ and $\phi_{ij}(\lambda_j)=0.$} by Mult($\mathcal{H}_{\ell}$). Examples (to be found in subsection \ref{302}) include products of powers of $2$-point Pick kernels (Example \ref{821}) and Hardy spaces on finitely-connected planar domains (Example \ref{822}). In subsection  \ref{303}, we give a  general criterion for the AS property. The idea here (see Theorem \ref{833} for a precise statement) is that a kernel $\ell$ has the AS property if and only if any weakly separated finite union of ``sufficiently sparse" sequences forms an $\mathcal{H}_{\ell}$-interpolating sequence. As a consequence, we discover that an even larger number of well-studied spaces possess AS kernels. These include ``large" weighted Bergman spaces (Example \ref{1002}) and weighted Bargmann-Fock spaces (Example \ref{1003}). Subsection  \ref{303} culminates in Theorem \ref{1000}, which describes a large class of pairs $(s, \ell)$ for which Question \ref{4} has a positive answer (this includes all pairs $(s, \ell)$ such that $\ell$ is one of the kernels from the previous examples and $s$ is a complete Pick factor of $\ell$). \par
Finally, it should be noted that the pair $(s, \ell)$ constructed in the proof of Theorem \ref{8}, while offering  a counterexample to Question \ref{4}, is not a natural setting for the solution of interpolation problems. One might then wonder whether imposing a few weak regularity conditions (like the ones in the statement of Theorem \ref{833}) on $(s, \ell)$  would always force the pair to behave according to the manner predicted by Question \ref{4}. This doesn't seem to be the case. In particular, subsection \ref{304} contains the construction of a ``nice" holomorphic pair $(s, \ell)$ on the bidisk which provides us with a more natural counterexample to Question \ref{4}  (however, that construction is not sufficient to establish Theorem \ref{8} in its entirety).

\small
 \section{PRELIMINARIES} \label{2}

\large
\small
\subsection{Complete Pick kernels}
\large  
Let $\mathcal{H}_k$ be a reproducing kernel Hilbert space on a set $X$, with kernel $k$. For basic facts regarding the theory of reproducing kernels, see \cite{Pick} and \cite{PauRa}. Also, let $n$ be a positive integer, and let $\mathcal{M}_n$ denote the $n$-by-$n$ complex matrices. We say that $k$ has the \textit{$N$-point $\mathcal{M}_n$ Pick property} if, for
every finite sequence $\lambda_1, \dots, \lambda_N$ of $N$ distinct points in $X$, and every sequence $W_1, \dots, W_N$ in $\mathcal{M}_n$, positivity of the block matrix 
$$\bigg[k(\lambda_i, \lambda_j)(I_{\mathbb{C}^n}-W_iW^*_j)   \bigg]^N_{i,j=1} $$
implies the existence of a multiplier $\Phi$ of $\mathcal{H}_k\otimes\mathbb{C}^n$ of norm at most $1$ that satisfies
$$\Phi(\lambda_i)=W_i, \hspace{0.3 cm} \text{ for all }1\le i\le N.$$
When $n=1$, we say $k$ has the \textit{$N$-point  (scalar) Pick property}. If $k$ has the $N$-point $\mathcal{M}_n$ Pick property
for every $n$ and $N$, we say the kernel, and the corresponding Hilbert space $\mathcal{H}_k$, have
the \textit{complete Pick property}. \par 
Examples of such kernels and spaces (all proofs can be found in \cite{Pick}) are the Szeg{\H o} kernel $\frac{1}{1-z\overline{w}}$ for the Hardy space on the unit disk; the Dirichlet kernel $ \frac{-1}{z\overline{w}}\log(1-z\overline{w})$ on the
disk; the kernels $\frac{1}{(1-z\overline{w})^t}$ for $0<t<1$ on the disk; the Sobolev space $W^2_1$ on the unit interval;  and the Drury-Arveson space, the
space of analytic functions on the unit ball $\mathbb{B}_d$ of a $d$-dimensional Hilbert
space (where $d$ may be infinite) with kernel
$$k(z, w)=\frac{1}{1-\langle z, w\rangle}.$$
\par A kernel $k$ is said to be \textit{irreducible} if the underlying set $X$ cannot be partitioned into two non-empty disjoint sets $X_1, X_2$ so that $k(x_1, x_2)=0$
for all $x_1\in X_1, x_2\in X_2$. The kernel $k$ of an irreducible complete Pick space satisfies
$k(z, w)\neq 0$ for all $z, w\in X$; see \cite[Lemma 1.1]{CNPkernels}. By Theorem 3.1 of \cite{CNPkernels}, the
space $\mathcal{H}_k$ is an irreducible complete Pick space if and only if there exist a function
$f: X\to\mathbb{C}\setminus{0}$, a number $d\in\mathbb{N}\cup\{\infty\}$ and a function $b: X\to\mathbb{B}_d$, where $\mathbb{B}_d$ denotes the open unit ball of a $d$-dimensional Hilbert
space $\mathcal{K}$, so that
\begin{equation}  k_w(z)=\frac{f(z)\overline{f(w)}}{1-\langle b(z), b(w) \rangle_{\mathcal{K}}} \hspace{0.3 cm} (z, w\in X). \label{2.1} \end{equation}
(See also \cite{Knese} for a simple proof of necessity.) Finally, a kernel $k$ is normalized at $w_0\in X$ if $k(z,w_0)=1$ for all $z\in X$. One can always \textit{rescale} an irreducible complete Pick kernel (see \cite[Section 2.6]{Pick} for more
background on rescaling kernels) to achieve that in \ref{2.1} the function $f$ is the constant function $1$ and $b(w_0)=0.$
We again point out that working in normalized spaces is merely convenient, not essential for our proofs.

\small
\subsection{Complete Pick factors} \label{1030}
\large 
Suppose $k, \ell$ are reproducing kernels on $X$. Then,  $\text{Mult}(\mathcal{H}_k, \mathcal{H}_{\ell})$ is the collection of functions $\phi: X\to\mathbb{C}$ such that $(M_{\phi}f)(z)=\phi(z)f(z) $ defines a
bounded operator $M_{\phi}:\mathcal{H}_k\to \mathcal{H}_{\ell}$ . It is easy to see that for
$\phi\in\text{Mult}(\mathcal{H}_k, \mathcal{H}_{\ell})$ one has
$$M_{\phi}^*\ell_w=\overline{\phi(w)}k_w,$$
for all $w\in X$. Moreover, the multipliers $\phi$ with $||\phi||\le M$ are characterized (see \cite[Theorem 5.21]{PauRa}) by the positivity of
\begin{equation} \label{706}
M^2\ell_w(z)-\phi(z)\overline{\phi(w)}k_w(z). 
\end{equation}
We say that the pair $(k, \ell)$ has the \textit{Pick property} if, for every finite sequence of distinct points $\lambda_1, \dots, \lambda_N\in X$ and every sequence $w_1, \dots, w_N\in\mathbb{C}$, positivity of the matrix
\begin{equation}\big[\ell_{\lambda_i}(\lambda_j)-w_j\overline{w}_ik_{\lambda_i}(\lambda_j)\big]^N_{i, j=1} \label{2.2} \end{equation}
implies the existence of a multiplier $\phi\in\text{Mult}(\mathcal{H}_k, \mathcal{H}_{\ell})$ of norm at most 1 that
satisfies $$\phi(\lambda_i)=w_i, \hspace{0.3 cm} \text{ for all } 1\le i\le N.$$
Note that, as observed in \cite[Section 4]{InterpolatingPick}, if the pair $(k, \ell)$ has
the Pick property, then one can solve Pick problems with infinitely many points.  \par
Now, assume that $s$ is an irreducible complete Pick
kernel normalized at some point, hence
$$s_w(z)=\frac{1}{1-\langle b(z), b(w)\rangle},$$
where $b: X\to\mathbb{B}_d$. Assume also that $\ell$ is another kernel on $X$ such that $\ell/s$ is positive semi-definite (denoted by $\ell/s>>0$). Simple examples of such kernels are given by $\ell=s^t$, $t\ge 1$. Note that the positivity condition for $\ell/s$ is satisfied if and only if $b\in\text{Mult}(\ell\otimes \mathbb{C}^d, \ell)$ with $||M_{b}||\le 1$ (if $d=\infty$, $\mathbb{C}^d$ is treated as $\ell^{2}$), see \cite[Lemma 2.2]{Factorizationsinduced} for a proof. In recent years, kernels with a complete Pick factor have  been investigated in regard to invariant subspaces \cite{BeurlingLaxHalmos}, factorization theorems \cite{Factorizationsinduced}, \cite{freeouter} and the column-row property \cite[Section 3.8]{Column-Row}. \par 
The following result is very useful in the context of $\text{Mult}(\mathcal{H}_s, \mathcal{H}_{\ell})$-interpolating sequences. It appears as Proposition 4.4 in \cite{InterpolatingPick}, where it is proved as an application of Leech's theorem \cite[Theorem 8.57]{Pick}.
\begin{theorem} \label{21}
Suppose $\ell, s$ are kernels on $X$ such that $s$ has the complete Pick property and $\ell/s>>0$. Then, the pair $(s, \ell)$ has the Pick property.
\end{theorem}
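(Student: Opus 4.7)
By the multiplier characterization \eqref{706}, it suffices to produce a function $\phi:X\to\mathbb{C}$ with $\phi(\lambda_i)=w_i$ such that $\ell(z,w)-\phi(z)\overline{\phi(w)}\,s(z,w)$ is positive semidefinite on $X\times X$. The plan is to encode the problem as a vector-valued interpolation on $\mathcal{H}_s$ and then invoke Leech's theorem, which is available since $s$ is CNP.

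First, since $\ell/s\geq 0$, I would set $g:=\ell/s$ and apply the Kolmogorov decomposition to factor $g(z,w)=\langle E(w),E(z)\rangle_{\mathcal{E}}$ for a suitable Hilbert space $\mathcal{E}$ and function $E:X\to\mathcal{E}$. The Pick matrix positivity \eqref{2.2} then rewrites as
\begin{equation*}
\bigl[s(\lambda_j,\lambda_i)\bigl(\langle E(\lambda_i),E(\lambda_j)\rangle_{\mathcal{E}}-w_j\overline{w_i}\bigr)\bigr]_{i,j=1}^N\geq 0.
\end{equation*}

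The central step is to deduce from this the existence of a column multiplier $\Psi\in\text{Mult}(\mathcal{H}_s,\mathcal{H}_s\otimes\mathcal{E})$ with $\|M_\Psi\|\leq 1$ and $\langle \Psi(\lambda_i),E(\lambda_i)\rangle_{\mathcal{E}}=w_i$ for every $i$. This would be obtained via an application of Leech's theorem \cite[Theorem 8.57]{Pick} (equivalently, the matrix-valued Pick property of the CNP kernel $s$): the displayed scalar positivity is precisely the obstruction for the one-sided $\mathcal{E}$-valued Pick problem whose constraint at each node $\lambda_i$ is the single linear equation $\langle \Psi(\lambda_i),E(\lambda_i)\rangle=w_i$. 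I expect this step to be the main obstacle, as it requires identifying the scalar Pick datum with the correct vector-valued multiplier datum and (if $\mathcal{E}$ is infinite-dimensional) a weak-$*$ compactness argument to extract $\Psi$.

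With $\Psi$ in hand, I would set $\phi(z):=\langle \Psi(z),E(z)\rangle_{\mathcal{E}}$, so $\phi(\lambda_i)=w_i$ by construction. To verify $\|M_\phi\|_{\mathcal{H}_s\to\mathcal{H}_\ell}\leq 1$, consider the map $V:\mathcal{H}_\ell\hookrightarrow\mathcal{H}_s\otimes\mathcal{E}$ determined on kernels by $V\ell_w=s_w\otimes E(w)$; this is an isometry since $\langle V\ell_w,V\ell_{w'}\rangle=s(w',w)\,g(w',w)=\ell(w',w)$. Its adjoint acts by $(V^*F)(w)=\langle F(w),E(w)\rangle_{\mathcal{E}}$, so $V^*(M_\Psi f)(w)=\langle \Psi(w)f(w),E(w)\rangle=f(w)\phi(w)$ for every $f\in\mathcal{H}_s$ and $w\in X$; that is, $V^*M_\Psi=M_\phi$ as operators $\mathcal{H}_s\to\mathcal{H}_\ell$. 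Since $V$ is an isometry and $\|M_\Psi\|\leq 1$, the bound $\|M_\phi\|\leq 1$ follows, completing the proof.
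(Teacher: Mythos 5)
Your argument is correct and is essentially the proof the paper refers to: the paper does not reprove Theorem \ref{21} but cites it from \cite[Proposition 4.4]{InterpolatingPick} as an application of Leech's theorem, which is exactly the route you take (Kolmogorov factorization of $g=\ell/s$, Leech's theorem with $A(\lambda_i)=E(\lambda_i)^*$ and $B(\lambda_i)=w_i$ to obtain the column multiplier $\Psi$, and recovery of $\phi$ via the isometry $V\ell_w = s_w\otimes E(w)$). The only remark worth making is that since the Pick property is a statement about \emph{finite} node sets, the finite-data form of Leech's theorem suffices and no weak-$*$ compactness is actually needed here.
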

An important consequence of Theorem \ref{21} is:
\begin{theorem}[Aleman, Hartz, M\raise.5ex\hbox{c}Carthy and Richter \cite{InterpolatingPick}] \label{3} 
Suppose $\ell, s$ are kernels on $X$ such that $s$ is a normalized complete Pick kernel and $\ell/s>>0$.
\begin{itemize}
    \item[(a)] A sequence is interpolating for $\text{Mult}(\mathcal{H}_{s}, \mathcal{H}_{\ell})$ if and only if it satisfies the Carleson measure condition (CM) for $\mathcal{H}_s$ and is interpolating for
$\mathcal{H}_{\ell}$.
\item[(b)] If a sequence is weakly separated by $s$, then it is interpolating for $\text{Mult}(\mathcal{H}_{s}, \mathcal{H}_{\ell})$
if and only if it is interpolating for $\text{Mult}(\mathcal{H}_{s})$.

\end{itemize}

\end{theorem}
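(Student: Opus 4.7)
The plan is to leverage the Pick property of the pair $(s,\ell)$ provided by Theorem \ref{21}, which converts the existence of interpolating multipliers into positivity of Pick matrices. First I would introduce the normalized Gram matrices $P_{ij} = \langle\hat{\ell}_{\lambda_j},\hat{\ell}_{\lambda_i}\rangle$ and $Q_{ij} = \langle\hat{s}_{\lambda_j},\hat{s}_{\lambda_i}\rangle$ acting on $\ell^2$. A direct computation shows that for data $\phi(\lambda_i) = w_i\|\ell_{\lambda_i}\|/\|s_{\lambda_i}\|$ with multiplier norm at most $M$, the Pick matrix from (\ref{2.2}), renormalized by dividing row $i$ and column $j$ by $\|\ell_{\lambda_i}\|\|\ell_{\lambda_j}\|$, becomes $M^2 P - W Q W^*$ with $W = \text{diag}(w_i)$. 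The relevant conditions translate cleanly: $\mathcal{H}_\ell$-interpolation corresponds to $P$ being bounded with $P \geq \varepsilon I$, while (CM) for $\mathcal{H}_s$ corresponds to boundedness of $Q$.

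For the reverse direction of (a), I would assume (CM) for $\mathcal{H}_s$ and $\mathcal{H}_\ell$-interpolation, so $Q$ is bounded and $P \geq \varepsilon I$. For any $\{w_i\}\in\ell^\infty$ with $\|w\|_\infty \leq 1$, the estimate $WQW^* \leq \|Q\|_{op}\,I$ combined with $P \geq \varepsilon I$ gives $M^2 P - W Q W^* \geq (M^2\varepsilon - \|Q\|_{op})\,I \geq 0$ for any $M \geq \sqrt{\|Q\|_{op}/\varepsilon}$. Invoking the infinite-point version of the pair Pick property (noted right after Theorem \ref{21}) then produces the required multiplier.

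The forward direction of (a) is where the main difficulty lies. Open mapping applied to the surjection $\phi \mapsto (\phi(\lambda_i)\|s_{\lambda_i}\|/\|\ell_{\lambda_i}\|)$ from $\text{Mult}(\mathcal{H}_s,\mathcal{H}_\ell)$ onto $\ell^\infty$, combined with Banach-Steinhaus, provides uniformly bounded dual multipliers $\phi_i$ satisfying $\phi_i(\lambda_j) = \delta_{ij}\|\ell_{\lambda_i}\|/\|s_{\lambda_i}\|$, whose adjoints $M_{\phi_i}^*$ send $\hat{\ell}_{\lambda_j}$ to $\delta_{ij}\hat{s}_{\lambda_i}$. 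Using the full family of Pick-matrix inequalities $C^2 P \geq W Q W^*$, valid for every $\{w_i\}\in\ell^\infty$ with $\|w\|_\infty \leq 1$, I would extract both the boundedness of $Q$ (i.e.\ (CM)) and the lower bound $P \geq \varepsilon I$ (together with boundedness of $P$, this is $\mathcal{H}_\ell$-interpolation). Boundedness of $Q$ follows fairly directly by testing against single-entry $W$; however, obtaining the lower bound on $P$ requires a subtler exploitation of the full $\ell^\infty$-surjectivity of the restriction map, and this is the main obstacle I expect to encounter.

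For part (b), the crucial auxiliary observation I would first establish is that $\text{Mult}(\mathcal{H}_s) \subseteq \text{Mult}(\mathcal{H}_\ell)$ with norm-decreasing inclusion: if $\phi \in \text{Mult}(\mathcal{H}_s)$ with $\|M_\phi\|\leq M$, then $(M^2 - \phi\overline{\phi})s$ is positive semi-definite, and multiplying by $g = \ell/s$ preserves positivity by the Schur product theorem, yielding $\phi \in \text{Mult}(\mathcal{H}_\ell)$ with the same norm bound. For the $(\Leftarrow)$ direction, I would argue that $\text{Mult}(\mathcal{H}_s)$-interpolation then implies $\text{Mult}(\mathcal{H}_\ell)$-interpolation and hence $\mathcal{H}_\ell$-interpolation; combined with (CM) for $\mathcal{H}_s$ (automatic from $\text{Mult}(\mathcal{H}_s)$-interpolation) and part (a), this yields $\text{Mult}(\mathcal{H}_s,\mathcal{H}_\ell)$-interpolation, notably without using weak separation by $s$. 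For the $(\Rightarrow)$ direction, part (a) supplies (CM) for $\mathcal{H}_s$, which combined with the weak separation hypothesis and the Aleman–Hartz–McCarthy–Richter complete Pick interpolation theorem delivers $\text{Mult}(\mathcal{H}_s)$-interpolation.
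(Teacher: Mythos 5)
This theorem is cited in the paper as a result of Aleman, Hartz, M\raise.5ex\hbox{c}Carthy and Richter (Proposition 4.7 and surrounding material in \cite{InterpolatingPick}); the present paper states it without proof, so there is no in-paper argument to compare against. I will therefore assess your proposal on its own merits.

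Your reverse direction of (a) is correct and efficient: after normalizing the Pick matrix (\ref{2.2}) you arrive at $M^2 P - WQW^*$, and with $P\geq\varepsilon I$, $Q$ bounded and $WW^*\leq I$ the positivity holds for $M$ large enough; the infinite-point version of the pair Pick property then yields the multiplier. Part (b)($\Leftarrow$) is also correct: the Schur product of $(M^2-\phi\overline{\phi})s$ with $g=\ell/s$ gives the norm-decreasing inclusion $\text{Mult}(\mathcal{H}_s)\subseteq\text{Mult}(\mathcal{H}_\ell)$, and the rest follows from (a). Part (b)($\Rightarrow$) is logically sound but uses the full (CM)+(WS)$\Rightarrow$(IM) theorem for complete Pick spaces, which is a much heavier hammer than what is actually needed to derive this implication.

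The genuine gap is in the forward direction of (a), and it is more serious than you indicate. Your claim that ``boundedness of $Q$ follows fairly directly by testing against single-entry $W$'' is false: with $W=e_ie_i^*$ the inequality $C^2P\geq WQW^*$ reduces to $C^2P_{ii}\geq Q_{ii}$, i.e.\ $C^2\geq 1$, which carries no information about the off-diagonal of $Q$ or its norm. The piece you describe as ``the main obstacle'' --- the lower bound $P\geq\varepsilon I$ --- is actually the easy part: using interpolating multipliers $\psi_\epsilon$ with $\psi_\epsilon(\lambda_j)=\epsilon_j\|\ell_{\lambda_j}\|/\|s_{\lambda_j}\|$ for random signs $\epsilon_j=\pm 1$, the identities $M_{\psi_\epsilon}^*\hat{\ell}_{\lambda_j}=\epsilon_j\hat{s}_{\lambda_j}$ and an average over signs give $\sum_j|a_j|^2 = \mathbb{E}\big\|M_{\psi_\epsilon}^*\sum a_j\hat{\ell}_{\lambda_j}\big\|^2\leq C^2\big\|\sum a_j\hat{\ell}_{\lambda_j}\big\|^2$, which is exactly $P\geq I/C^2$. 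What you have not addressed at all is the upper bound on $P$, i.e.\ the Carleson measure condition for $\mathcal{H}_\ell$, which together with $P\geq\varepsilon I$ is what constitutes $\mathcal{H}_\ell$-interpolation. Neither the dual multipliers $\phi_i$ alone nor sign averaging give this: one needs a genuinely different step, such as assembling the column $\Phi=(\phi_1,\phi_2,\dots)^T$ and solving the associated vector-valued Pick problem for the pair $(s,\ell)$ (the normalized Pick matrix for data $e_j\|\ell_{\lambda_j}\|/\|s_{\lambda_j}\|$ becomes $M^2P-I$, which is positive once you know $P\geq I/M^2$); the resulting bounded column multiplier then delivers $\|\sum a_j\hat{\ell}_{\lambda_j}\|\leq\|M_\Phi\|\,(\sum|a_j|^2)^{1/2}$, i.e.\ $P$ bounded. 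Only then does $W=I$ in the scalar Pick inequality yield $Q\leq C^2P$, hence (CM) for $\mathcal{H}_s$. Without this step your argument does not establish either (CM) for $\mathcal{H}_s$ or $\mathcal{H}_\ell$-interpolation.
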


Note that, in general, a $\text{Mult}(\mathcal{H}_{s}, \mathcal{H}_{\ell})$-interpolating sequence needn't be weakly separated by $s$ (\cite[Example 4.13]{InterpolatingPick}).

\small
\subsection{Grammians}
\large 

Let $\mathcal{H}_k$ be a reproducing kernel Hilbert space on a set $X$, with kernel $k$. We write $\hat{k}_z=k_z/||k_z|| $ for the normalized kernel function at $z$. Let $\{\lambda_i\}$ be a sequence of distinct points in $X$. The Grammian, or Gram matrix,
associated with the sequence is the (infinite) matrix $G(k)=[G_{i, j}]$, where
$$G_{i, j}=\langle \hat{k}_{\lambda_i}, \hat{k}_{\lambda_j} \rangle=\frac{k(\lambda_j, \lambda_i)}{\sqrt{k(\lambda_j, \lambda_j)k(\lambda_i, \lambda_i)}}.$$
We say that the sequence $\{\lambda_i\}\subset X$ has a bounded Grammian (BG) if the Gram matrix, thought of as an operator on $\ell^2$, is bounded; we shall
say that it is bounded below (BB) if the Gram matrix is bounded below on $\ell^2$. It is known that if the Grammian of a sequence for the Szeg{\H o} kernel $s_w(z)=\frac{1}{1-z\overline{w}} $ is bounded below, then it is bounded above. This is no longer true in the Dirichlet space; see \cite{Bishop}. Sequences satisfying (BB) have also been called \textit{simply interpolating} and have been studied in \cite{ArcozziRochbergDirichlet}, \cite{ChalmoukisSimply} and  \cite{ChalmoukisOnto} in the setting of the Dirichlet space. 
\par 
The following lemma is well-known (see \cite[Chapter 9]{Pick} for a proof). 
\begin{lemma}  \label{131} 
\begin{itemize}
\item[]  
 
    \item[(a)] The Grammian is bounded (BG) if and only if the sequence satisfies the Carleson measure condition (CM) for $\mathcal{H}_k$.
    \item[(b)] The following three conditions are equivalent:
    \begin{itemize}
        \item[(i)] the Grammian is bounded and bounded below (BG)+(BB),
        \item[(ii)] the functions $\hat{k}_{\lambda_i}$ form a Riesz sequence, i.e. there exist $c_1, c_2 >0$ such that for all scalars $a_i$, 
        $$c_1\sum_{i}|a_i|^2\le \big|\big|\sum_{i}a_i\hat{k}_{\lambda_i}\big|\big|^2\le c_2\sum_{i}|a_i|^2, $$
        \item[(iii)] the sequence is interpolating for $\mathcal{H}_k$ (IH). 
    \end{itemize}

\end{itemize}
\end{lemma}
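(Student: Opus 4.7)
The plan is to set up a single auxiliary operator encoding all the relevant conditions, and then read off (a) and (b) as statements about this operator and its adjoint. Define the weighted restriction map
\[
T:\mathcal{H}_k \longrightarrow \mathbb{C}^{\mathbb{N}}, \qquad (Tf)_i \;=\; \frac{f(\lambda_i)}{\|k_{\lambda_i}\|} \;=\; \langle f,\hat{k}_{\lambda_i}\rangle.
\]
The evaluations $\langle\cdot,\hat k_{\lambda_i}\rangle$ are norm-continuous, so a closed-graph argument shows that $T$ actually takes values in $\ell^2$ if and only if it is a bounded operator $\mathcal{H}_k\to\ell^2$, which is exactly the Carleson measure condition (CM). On the $\ell^2$ side, $T^\ast$ is computed from $\langle T^\ast a, f\rangle = \langle a, Tf\rangle$ and is easily seen to act (on finitely supported sequences, then by extension) as $T^\ast a = \sum_i a_i \hat{k}_{\lambda_i}$. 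Consequently
\[
(TT^\ast)_{ij} \;=\; \langle \hat{k}_{\lambda_j},\hat{k}_{\lambda_i}\rangle \;=\; G_{ij},
\]
so the Gram matrix is literally $G = TT^\ast$.

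For part (a), I would combine the two observations above: boundedness of $T$ is equivalent to (CM), and, since $\|T\|^2 = \|T^\ast\|^2 = \|TT^\ast\| = \|G\|$, this in turn is equivalent to (BG). This gives (BG) $\iff$ (CM).

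For part (b), I would deduce the three equivalences by tracking $T$ and $T^\ast$. First, the identity $\|T^\ast a\|_{\mathcal{H}_k}^2 = \langle Ga,a\rangle_{\ell^2}$ shows that $G$ is bounded below (in the quadratic-form sense) if and only if $T^\ast$ is bounded below, which, combined with the upper bound from (BG), is precisely the Riesz sequence estimate in (ii). Hence (i) $\iff$ (ii). For (ii) $\iff$ (iii), note that (iii) says $T(\mathcal{H}_k)=\ell^2$, which in particular requires $T$ bounded (so (CM), i.e.\ (BG), holds) and surjective. By the closed range theorem, surjectivity of the bounded operator $T$ is equivalent to $T^\ast$ being bounded below, and this matches the Riesz lower bound in (ii). Conversely, if (ii) holds then $T^\ast$ is bounded and bounded below, so $T^\ast$ is injective with closed range, and another application of the closed range theorem yields $\operatorname{range}(T) = (\ker T^\ast)^\perp = \ell^2$, i.e.\ (iii).

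There is no real obstacle here, as this is standard Hilbert-space operator theory; the only point deserving mild care is the legitimacy of the formal adjoint computation $T^\ast a = \sum a_i \hat{k}_{\lambda_i}$, which should be done first on finitely supported sequences and then extended to $\ell^2$ using whichever side of the dichotomy is under discussion (either boundedness of $T$, yielding boundedness of $T^\ast$, or the Riesz upper bound, which gives convergence of the series directly).
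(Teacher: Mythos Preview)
Your argument is correct and is the standard operator-theoretic proof of this fact. The paper does not actually supply its own proof of Lemma~\ref{131}; it simply cites \cite[Chapter~9]{Pick}, where essentially the same argument (identifying the Grammian with $TT^\ast$ for the weighted restriction operator $T$ and invoking the closed graph/closed range theorems) is carried out. One cosmetic point: with the paper's convention $G_{i,j}=\langle \hat k_{\lambda_i},\hat k_{\lambda_j}\rangle$, your computation actually gives $(TT^\ast)_{ij}=\overline{G_{ij}}=G_{ji}$, but since $G$ is Hermitian this has no effect on boundedness or boundedness below.
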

We will also be making crucial use of the following result, which is part of \cite[Theorem 9.46]{Pick}. We use $\{e_i\}$ to denote the standard orthonormal basis for $\ell^2$.
\begin{theorem} \label{20}
Let $k$ be an irreducible complete Pick kernel on $X$, let $\{\lambda_i\}\subset X$, and let $G$ denote the Grammian associated with $\{\lambda_i\}$. Then, $G$ is bounded if and only if there exists is a multiplier
$\Psi\in\text{Mult}(H_s, H_s\otimes \ell^2)$ such that 
$$\Psi(\lambda_i)=e_i=\begin{pmatrix} \mbox{\hspace{0.1 cm} $\mathrm{*}$ \hspace{0.1 cm}} \\ \vdots \\  \vspace{0.06 cm} \mbox{$\mathrm{*}$} \\ 1  \vspace{0.07 cm} \\ \mbox{$\mathrm{*}$}\vspace{0.001 cm} \\ \vspace{0.001 cm}\vdots\end{pmatrix},
$$
for every $i.$
\end{theorem}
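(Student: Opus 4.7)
My plan has two asymmetric parts: the easy direction (multiplier implies bounded Grammian) is an adjoint computation, while the converse uses the complete Pick property through an operator-valued Pick theorem.

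For the $(\Leftarrow)$ direction, I would first compute the adjoint of $M_\Psi$. Unwinding the definitions ($(M_\Psi f)(z) = \Psi(z) f(z) \in \ell^2$) and using the reproducing property yields, for any $z \in X$ and $\xi \in \ell^2$,
$$M_\Psi^*(k_z \otimes \xi) = \langle \xi, \Psi(z)\rangle_{\ell^2}\, k_z.$$
Specializing to $z=\lambda_j$, $\xi = e_i$ and using the interpolation conditions $\Psi(\lambda_j) = e_j$ gives $M_\Psi^*(\hat{k}_{\lambda_j} \otimes e_i) = \delta_{ij}\, \hat{k}_{\lambda_j}$. Since $\{\hat{k}_{\lambda_i} \otimes e_i\}_i$ is an orthonormal system in $\mathcal{H}_k \otimes \ell^2$, the map $V: \ell^2 \to \mathcal{H}_k \otimes \ell^2$ defined by $Ve_i = \hat{k}_{\lambda_i} \otimes e_i$ is an isometry, and the composite $M_\Psi^* V$ (bounded by $\|M_\Psi\|$) sends $e_i$ to $\hat{k}_{\lambda_i}$. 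For any finitely supported $a = (a_i) \in \ell^2$,
$$\langle Ga, a\rangle = \Bigl\|\sum_i a_i \hat{k}_{\lambda_i}\Bigr\|^2 = \|M_\Psi^* V a\|^2 \le \|M_\Psi\|^2 \|a\|_2^2,$$
which gives $\|G\| \le \|M_\Psi\|^2 < \infty$.

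For $(\Rightarrow)$, I would invoke the operator-valued Pick interpolation theorem available under the complete Pick hypothesis on $k$, combined with the extension to infinitely many interpolation nodes noted after Theorem \ref{21}. This says: a multiplier $\Psi \in \text{Mult}(\mathcal{H}_k, \mathcal{H}_k \otimes \ell^2)$ with $\Psi(\lambda_i) = e_i$ and $\|M_\Psi\| \le c$ exists if and only if the $B(\ell^2)$-valued kernel $[k(\lambda_i, \lambda_j)(c^2 I - e_i e_j^*)]_{i,j}$ is positive semi-definite. Letting $K = [k(\lambda_i, \lambda_j)]$ and $W: \ell^2 \to \ell^2 \otimes \ell^2$ the isometry $We_i = e_i \otimes e_i$, this block kernel coincides with $c^2 (K \otimes I) - W K W^*$. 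A sandwich with $W^*$ reveals the necessary condition $G \le c^2 I$; for sufficiency I would conjugate by $(K^{1/2} \otimes I)^\dagger$ (a Moore--Penrose pseudoinverse, handled via finite truncations when $K$ is singular) to reduce the positivity to a norm bound $\|TT^*\| \le c^2$ for the auxiliary operator $T := (K^{1/2} \otimes I)^\dagger W K^{1/2}$. A direct coordinate calculation then relates $\|TT^*\|$ to $\|G\|$, and choosing $c^2$ large in terms of $\|G\|$ closes the argument.

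The main obstacle is the sufficiency step in the $(\Rightarrow)$ direction, specifically the norm bound $\|TT^*\| \le \text{const}\cdot \|G\|$: the formula for $T$ is explicit but the index bookkeeping is intricate, and the pseudoinverse must be handled via finite truncations of $K$. The subsidiary step of extracting an infinite multiplier from compatible finite interpolants via a weak-$*$ compactness argument on multiplier unit balls is standard but also requires some care.
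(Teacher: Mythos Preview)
The paper does not prove this theorem; it is quoted from \cite[Theorem 9.46]{Pick}, so there is no ``paper's own proof'' to compare against. Your $(\Leftarrow)$ direction is essentially correct: although you write ``$\Psi(\lambda_j)=e_j$ gives $M_\Psi^*(\hat k_{\lambda_j}\otimes e_i)=\delta_{ij}\hat k_{\lambda_j}$'', the off-diagonal case $i\neq j$ is actually unknown (the displayed vector has $*$'s, not $0$'s, in those slots), but you only use the diagonal case $M_\Psi^*(\hat k_{\lambda_i}\otimes e_i)=\hat k_{\lambda_i}$, which is valid.

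Your $(\Rightarrow)$ direction contains a genuine gap. By setting the targets to be the standard basis vectors $e_i$ (all other coordinates equal to $0$), you have discarded the freedom encoded in the $*$ entries, and this freedom is essential. Concretely, take a two--point sequence with normalized Grammian $G=\begin{pmatrix}1&t\\ t&1\end{pmatrix}$, $0<t<1$. The Pick block matrix for the targets $e_1,e_2$ is, after normalization, $c^2(G\otimes I)-WGW^*$, and testing it against the vector $a$ with columns $a^{(1)}=(1,-t)$, $a^{(2)}=(-t,1)$ (so that $b=W^*a=(1,1)$) gives
\[
\langle(c^2(G\otimes I)-WGW^*)a,a\rangle
= c^2\bigl(2-2t^2\bigr)-\bigl(2+2t\bigr),
\]
which forces $c^2\ge 1/(1-t)$. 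As $t\to 1$ we have $\|G\|=1+t\le 2$ while the required $c^2\to\infty$; hence no bound of the form $c^2\le f(\|G\|)$ can make the Pick matrix with targets $e_i$ positive. In other words, the hoped-for inequality $\|TT^*\|\le \text{const}\cdot\|G\|$ is false. By contrast, for the same two points the choice $\Psi\equiv(1,1)^T$ already satisfies the interpolation conditions (first coordinate $1$ at $\lambda_1$, second coordinate $1$ at $\lambda_2$) with $\|M_\Psi\|=\sqrt 2$, independently of $t$. The correct argument must therefore exploit the unspecified $*$ entries; this is what the proof in \cite{Pick} does.
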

As noted in \cite{InterpolatingPick}, a more restrictive definition of irreducibility is used in the statement given in \cite{Pick}, however our more relaxed definition suffices for the proof to go through. \par
Finally, we record a basic Hilbert space lemma (as seen in \cite[Section I]{ShapiroShields}) which will be used repeatedly throughout the paper, often without special mention. 
\begin{lemma} \label{708}
Suppose $\mathcal{H}$ is a Hilbert space and $v_0, v_1, \dots, v_n\in\mathcal{H}$. Let $d$ denote the distance from $v_0$ to the subspace spanned by $v_1, \dots, v_n$. If $v_1, \dots, v_n$ are also linearly independent, then 
$$d^2=\frac{\det[\langle v_i, v_j \rangle]_{0\le i,j\le n}}{\det[\langle v_i, v_j \rangle]_{1\le i,j\le n}}.$$
\end{lemma}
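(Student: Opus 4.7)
The plan is to realize $d^2$ as a ratio of Gram determinants via an orthogonal projection argument followed by a single elementary row operation. Let $V=\operatorname{span}\{v_1,\dots,v_n\}$. Since $v_1,\dots,v_n$ are linearly independent, the Gram matrix $A=[\langle v_i,v_j\rangle]_{1\le i,j\le n}$ is invertible, so there exist unique scalars $c_1,\dots,c_n$ for which $p:=\sum_{k=1}^{n}c_k v_k$ is the orthogonal projection of $v_0$ onto $V$. The orthogonality of $v_0-p$ to $V$ gives $\langle v_0-p,v_j\rangle=0$ for $1\le j\le n$, and therefore $\langle v_0-p,v_0\rangle=\langle v_0-p,v_0-p\rangle+\langle v_0-p,p\rangle=d^2$, since $p\in V$ makes the second summand vanish.

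Next, I would consider the $(n+1)\times(n+1)$ matrix $M=[\langle v_i,v_j\rangle]_{0\le i,j\le n}$ and perform the elementary row operation that subtracts $\sum_{k=1}^{n}c_k\cdot(\text{row }k)$ from row $0$. Such an operation preserves the determinant, and the new $(0,j)$-entry becomes
$$\langle v_0,v_j\rangle-\sum_{k=1}^{n}c_k\langle v_k,v_j\rangle=\langle v_0-p,v_j\rangle,$$
which equals $0$ for $j\ge 1$ and equals $d^2$ for $j=0$ by the previous paragraph. Expanding the resulting matrix along its first row (which now has the form $(d^2,0,\dots,0)$) yields
$$\det M=d^2\cdot\det[\langle v_i,v_j\rangle]_{1\le i,j\le n},$$
and solving for $d^2$ produces the claimed identity.

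There is no real obstacle to this argument; the lemma is classical. The only minor bookkeeping point is to match the conjugate-linearity convention of the inner product with the row operation, but since the $c_k$ appear as scalar coefficients of vectors inside the first slot of $\langle p,v_j\rangle$, they enter the row operation without conjugation under the convention that $\langle\cdot,\cdot\rangle$ is linear in the first entry.
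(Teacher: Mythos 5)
Your argument is correct. The paper states Lemma 2.6 without proof, citing Shapiro--Shields (Section~I) for it; the projection-plus-row-operation derivation you give is precisely the standard route to this Gram-determinant identity, and your note on the sesquilinearity convention correctly disposes of the only delicate bookkeeping point.
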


 \section{A characterization of $\text{Mult}(\mathcal{H}_{s}, \mathcal{H}_{\ell})$-interpolating sequences} \label{5}

\large
\subsection{Necessary and sufficient conditions}
Suppose $s$ is a normalized (irreducible) complete Pick kernel defined on a set $X$. Suppose also that $\ell$ is another kernel on $X,$ satisfying $$\ell(z, w)=s(z, w)g(z, w),\hspace{0.3 cm} z,w \in X,$$
where $g$ is a kernel. Let $\{\lambda_i\}\subset X$ and $n\ge 2$. Recall that $\{\lambda_i\}$ is \textit{$n$-weakly separated by $\ell$} if there exists $\epsilon>0$ such that for every $n$-point subset $\{\mu_1, \dots, \mu_n\}\subset\{\lambda_i\}$ we have
$$\text{dist}\big(\hat{\ell}_{\mu_1}, \text{ span}\big\{\hat{\ell}_{\mu_2}, \dots, \hat{\ell}_{\mu_n}\big\} \big)\ge \epsilon.$$
Similarly, we say that $\{\lambda_i\}$ is \textit{strongly separated by $\ell$} if there exists $\epsilon>0$ such that for every $i\in\mathbb{N}$ we have
$$\text{dist}\big(\hat{\ell}_{\lambda_i}, \text{ span}_{j\neq i}\big\{\hat{\ell}_{\lambda_j}\big\} \big)\ge \epsilon.$$

The fact that the pair $(s, \ell)$ satisfies the Pick property allows us to recast weak and strong separation by $\ell$ in terms of separation by elements of $\text{Mult}(\mathcal{H}_{s}, \mathcal{H}_{\ell})$. 
\begin{lemma} \label{18}
Suppose $s$ is a normalized complete Pick factor of a kernel $\ell$ on $X.$  Also, let $\{\lambda_i\}\subset X$ and $n\ge 2.$
\begin{itemize}
    \item[(a)] $\{\lambda_i\}$ is $n$-weakly separated by $\ell$ if and only if there exists $\epsilon>0$ such that for every $n$-point subset $\{\mu_1, \mu_2, \dots, \mu_n\}$ of $\{\lambda_i\}$ there exists a multiplier $\phi\in\text{Mult}(\mathcal{H}_{s}, \mathcal{H}_{\ell})$ of norm at most $1$ with $\phi(\mu_1)=\epsilon\frac{||\ell_{\mu_1}||}{||s_{\mu_1}||}$ and $\phi(\mu_j)=0,$ for $j=2, 3,\dots, n$.
    \item[(b)] $\{\lambda_i\}$ is strongly separated by $\ell$ if and only if there exists $\epsilon>0$ such that for every $i\in\mathbb{N}$ there exists a multiplier $\phi\in\text{Mult}(\mathcal{H}_{s}, \mathcal{H}_{\ell})$ of norm at most $1$ with
    $\phi(\lambda_i)=\epsilon\frac{||\ell_{\lambda_i}||}{||s_{\lambda_i}||}$ and $\phi(\lambda_j)=0$ for every $j\neq i$.
    
\end{itemize}
\end{lemma}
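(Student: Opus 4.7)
The plan is to reduce both parts of the lemma to direct Pick-matrix computations for the pair $(s,\ell)$, whose Pick property is guaranteed by Theorem~\ref{21}.

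For part (a), I would fix an $n$-point subset $\{\mu_1,\dots,\mu_n\}\subset\{\lambda_i\}$ and specialize the interpolation data to $w_1 = \epsilon\|\ell_{\mu_1}\|/\|s_{\mu_1}\|$ and $w_2=\cdots=w_n=0$. By Theorem~\ref{21} and the multiplier characterization~\eqref{706}, a multiplier $\phi\in\text{Mult}(\mathcal{H}_s,\mathcal{H}_\ell)$ satisfying the prescribed values with $\|\phi\|\le 1$ exists if and only if the Pick matrix~\eqref{2.2} is positive semi-definite. Since $w_j\overline{w_i}$ vanishes except at $(i,j)=(1,1)$, that matrix collapses to
$$M_\epsilon \;:=\; \bigl[\ell_{\mu_i}(\mu_j)\bigr]_{i,j=1}^n \;-\; \epsilon^2\|\ell_{\mu_1}\|^2\, E_{11},$$
where $E_{11}$ has a single $1$ in its $(1,1)$-entry and zeros elsewhere.

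The one substantive step is to identify positivity of $M_\epsilon$ with the $n$-weak separation inequality. For any $v=(v_1,\dots,v_n)\in\mathbb{C}^n$, setting $f=\sum_i v_i\ell_{\mu_i}$, the Hermitian form of $M_\epsilon$ at $v$ equals $\|f\|^2 - \epsilon^2\|\ell_{\mu_1}\|^2|v_1|^2$. Decomposing $\ell_{\mu_1} = P\ell_{\mu_1} + (I-P)\ell_{\mu_1}$, with $P$ the orthogonal projection onto $W:=\text{span}\{\ell_{\mu_2},\dots,\ell_{\mu_n}\}$, a short computation shows that the infimum of $\|f\|^2/|v_1|^2$ over $v$ with $v_1\neq 0$ equals $\|(I-P)\ell_{\mu_1}\|^2 = \|\ell_{\mu_1}\|^2\cdot\text{dist}(\hat\ell_{\mu_1},W)^2$. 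Hence $M_\epsilon\ge 0$ if and only if $\text{dist}(\hat\ell_{\mu_1},\text{span}\{\hat\ell_{\mu_2},\dots,\hat\ell_{\mu_n}\})\ge \epsilon$. (One could also derive this equivalence determinantally from Lemma~\ref{708}.) This completes (a), with the separation constant matching the multiplier constant on both sides of the equivalence.

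For part (b), the same reduction applies, but the Pick problem now carries infinitely many conditions $\phi(\lambda_j)=0$ for $j\neq i$. By the remark following Theorem~\ref{21}, the Pick property of $(s,\ell)$ extends to infinitely many points, so a norm-$\le 1$ solution exists iff every finite truncation of the associated Pick matrix is positive semi-definite. Applied to $w_i=\epsilon\|\ell_{\lambda_i}\|/\|s_{\lambda_i}\|$ and $w_j=0$ for $j\neq i$, each truncation to $\{\lambda_1,\dots,\lambda_N\}$ (for $N\ge i$) is again of the form $M_\epsilon$ studied in (a), whose positivity is equivalent to $\text{dist}(\hat\ell_{\lambda_i},\text{span}_{j\le N,\,j\neq i}\{\hat\ell_{\lambda_j}\})\ge \epsilon$. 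Letting $N\to\infty$ and using that the distance to a subspace equals the distance to its closure, positivity of every truncation corresponds exactly to strong separation of $\{\lambda_i\}$ by $\ell$. The main (minor) obstacle throughout is the identification of $M_\epsilon\ge 0$ with the distance inequality; once that is in hand, both implications become immediate consequences of the Pick property.
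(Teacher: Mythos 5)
Your proof is correct, and it matches the paper's in overall skeleton (specialize the targets to $w_1=\epsilon\|\ell_{\mu_1}\|/\|s_{\mu_1}\|$, $w_j=0$; identify Pick-matrix positivity with the distance inequality; invoke the Pick property of $(s,\ell)$, extended to infinitely many points for part (b)). The interesting difference is how you establish the middle step, the equivalence between positivity of $M_\epsilon$ and the distance bound. The paper goes through Lemma~\ref{708} to express $\text{dist}^2$ as a ratio of Gram determinants, feeds this into Sylvester's criterion for every leading principal minor, and prudently drops the separating value to $\epsilon/2$ so that all the determinant inequalities are strict (as Sylvester's test for strict positive definiteness requires); it also has to check linear independence of the kernel functions first so that the determinant in the denominator is nonzero. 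You instead evaluate the Hermitian form $\sum_{i,j}\overline{a_j}(M_\epsilon)_{ij}a_i = \|\sum_i a_i\ell_{\mu_i}\|^2 - \epsilon^2\|\ell_{\mu_1}\|^2|a_1|^2$ and minimize over the coordinates $a_2,\dots,a_n$ via the projection onto $W$. This is more elementary (no determinants, no Sylvester, no linear-independence hypothesis), and it gives the sharp equivalence $M_\epsilon\ge 0 \iff \text{dist}(\hat\ell_{\mu_1},W)\ge\epsilon$ with the \emph{same} $\epsilon$ on both sides, rather than a halved constant. Your treatment of (b) via exhausting the constraints by finite truncations and then passing to the closure of the span is exactly the right way to handle the infinite Pick problem and agrees with what the paper sketches there.
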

\begin{proof}
First, we prove (a). Let $n\ge 2$ and suppose $\{\lambda_i\}$ is $n$-weakly separated by $\ell$. We can then find $\epsilon>0$ such that for every $n$-point subset $\{\mu_1, \dots, \mu_n\}\subset\{\lambda_i\}$ we have 
$$d=\text{dist}\big(\hat{\ell}_{\mu_1}, \text{ span}\big\{\hat{\ell}_{\mu_2}, \dots, \hat{\ell}_{\mu_n}\big\} \big)\ge \epsilon.$$
Now, fix $n$ points $\{\mu_1, \dots, \mu_n\}\subset\{\lambda_i\}$ and let $m\in\{2, 3,\dots, n\}$. $n$-weak separation implies that the vectors $\{\hat{\ell}_{\mu_1}, \hat{\ell}_{\mu_2}, \dots, \hat{\ell}_{\mu_n}\}$ are linearly independent. In view of Lemma \ref{708}, we can write
$$\frac{\det\big[\langle \hat{\ell}_{\mu_i}, \hat{\ell}_{\mu_j}  \rangle\big]_{1\le i, j\le m}}{\det\big[\langle \hat{\ell}_{\mu_i}, \hat{\ell}_{\mu_j}  \rangle\big]_{2\le i, j\le m}}=\Big[\text{dist}\big(\hat{\ell}_{\mu_1}, \text{ span}\big\{\hat{\ell}_{\mu_2}, \dots, \hat{\ell}_{\mu_m}\big\} \big)\Big]^2\ge d^2\ge\epsilon^2$$
$$\RR \det\big[(1-w_j\overline{w}_i)\langle \hat{\ell}_{\mu_i}, \hat{\ell}_{\mu_j}  \rangle\big]_{1\le i, j\le m}>0, $$
where $w_1=\epsilon/2$ and $w_2=w_3=\dots=w_n=0$. Since this is true for arbitrary $m\in\{2, 3,\dots, n\}$, Sylvester's criterion tells us that the matrix 
$$\big[(1-w_j\overline{w}_i)\langle \hat{\ell}_{\mu_i}, \hat{\ell}_{\mu_j}  \rangle\big]_{1\le i, j\le n}$$
is positive semi-definite. Multiplying the previous matrix by the dyad \\ $\big[||\ell_{\mu_i}||\cdot||\ell_{\mu_j}||\big]$, we obtain the positivity of
$$\big[(1-w_j\overline{w}_i) \ell(\mu_j, \mu_i)\big]_{1\le i, j\le n},$$
which can be rewritten as 
$$\big[\ell(\mu_j, \mu_i)-v_j\overline{v}_is(\mu_j, \mu_i)\big]_{1\le i, j\le n},$$
where $v_1=\frac{\epsilon}{2}\frac{||\ell_{\mu_1}||}{||s_{\mu_1}||}$ and $v_2=v_3=\dots=v_n=0$. But $(s, \ell)$ has the Pick property, so we can deduce the existence of a multiplier $\phi\in\text{Mult}(\mathcal{H}_{s}, \mathcal{H}_{\ell})$ of norm at most $1$ such that $\phi(\mu_1)=\frac{\epsilon}{2}\frac{||\ell_{\mu_1}||}{||s_{\mu_1}||}$ and $\phi(\mu_j)=0,$ for $j=2, 3,\dots, n$. \\
We have proved one implication from part (a). For the converse, simply reverse the steps in the previous proof (the Pick property of  $(s, \ell)$ is no longer necessary). \par The proof of (b) is essentially identical to that of (a). One point worth mentioning is that the inequalities $ \det\big[(1-w_j\overline{w}_i)\langle \hat{\ell}_{\mu_i}, \hat{\ell}_{\mu_j}  \rangle\big]_{1\le i, j\le m}>0$, for all $m\ge 2,$ allow us to deduce (through Sylvester's criterion and standard approximation arguments) the positivity of the infinite matrix $\big[(1-w_j\overline{w}_i)\langle \hat{\ell}_{\mu_i}, \hat{\ell}_{\mu_j}  \rangle\big]_{i, j}$. The rest of the proof carries over without change.
\end{proof}
Now, we use the column-row property for spaces with a complete Pick factor (see \cite[Theorem 3.18]{Column-Row}) to characterize Mult$(\mathcal{H}_s, \mathcal{H}_{\ell})$-interpolating sequences in terms of the $\mathcal{H}_s$-Carleson measure condition and strong separation by $\ell$. Our argument is motivated by the proof of Theorem 4.4 in \cite{Column-Row}. 
\begin{theorem}\label{30}
Suppose $s$ and $\ell$ are as above and let $\{\lambda_i\}\subset X$. Then, $\{\lambda_i\}$ is interpolating for Mult$(\mathcal{H}_s, \mathcal{H}_{\ell})$ if and only if it satisfies the Carleson measure condition for $\mathcal{H}_s$  and is strongly separated by $\ell.$  In this case, there exists a bounded linear operator of interpolation associated with $\{\lambda_i\}$.
\end{theorem}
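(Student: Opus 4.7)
The plan is to handle the two directions separately; the sufficiency direction will be carried out by constructing the interpolating multipliers as a triple composition of multiplier-valued building blocks.

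For necessity, I would use the open mapping theorem: the evaluation $\phi \mapsto (\phi(\lambda_i)\|s_{\lambda_i}\|/\|\ell_{\lambda_i}\|)_i$ is bounded by (\ref{17}) and surjective onto $\ell^\infty$, so it admits a bounded right inverse; applied to the standard basis vectors this produces uniformly bounded peak multipliers, and the easy direction of Lemma \ref{18}(b) then yields strong separation by $\ell$. The (CM) condition for $\mathcal{H}_s$ is immediate from Theorem \ref{3}(a).

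For sufficiency, assume (CM) and strong separation by $\ell$ with constant $\epsilon$. The key step is to produce a bounded column multiplier $C \in \text{Mult}(\mathcal{H}_s, \mathcal{H}_\ell \otimes \ell^2)$ with $C(\lambda_i) = a_i e_i$, where $a_i := \|\ell_{\lambda_i}\|/\|s_{\lambda_i}\|$. The pair $(s, \ell \otimes \ell^2)$ has the Pick property by Theorem \ref{21} (since $(\ell \otimes \ell^2)/s = g \otimes I \succeq 0$), extended to infinite-point data as noted after that theorem, so existence of $C$ reduces to verifying the block Pick positivity $[K^2 \ell(\lambda_i,\lambda_j) I_{\ell^2} - a_i a_j s(\lambda_i,\lambda_j) e_i e_j^*]_{i,j} \succeq 0$ for some $K < \infty$. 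Testing against arbitrary $x = (x^{(j)}) \in (\ell^2)^N$ and then minimizing out the off-diagonal components of the matrix $X$ defined by $X_{ik} = (x^{(i)})_k$ with its diagonal $\beta_k = X_{kk}$ held fixed reduces the positivity to the scalar operator inequality
\begin{equation*}
K^2 \operatorname{diag}(d_k^2) \succeq G_s^{\mathrm{norm}},
\end{equation*}
where $d_k := \operatorname{dist}(\hat{\ell}_{\lambda_k}, \overline{\operatorname{span}}_{j \neq k} \hat{\ell}_{\lambda_j})$ and $G_s^{\mathrm{norm}}$ is the normalized Gram matrix of the $\hat{s}_{\lambda_i}$. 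The translation of the inner minimization into the distances $d_k$ proceeds through the Hilbert-space identity $d_k^2 = 1/(G_\ell^{-1})_{kk}$, a consequence of Lemma \ref{708}. Strong separation gives $d_k \ge \epsilon$ and (CM) gives $\|G_s^{\mathrm{norm}}\| < \infty$, so any $K \ge \|G_s^{\mathrm{norm}}\|^{1/2}/\epsilon$ delivers $C$.

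Once $C$ is obtained, its components $\phi_k$ form a column-bounded sequence in $\text{Mult}(\mathcal{H}_s, \mathcal{H}_\ell)$. The column-row property for $\mathcal{H}_\ell$ (Theorem 3.18 of \cite{Column-Row}, applicable because $s$ is a complete Pick factor of $\ell$) then upgrades $C$ to a bounded row multiplier $R \in \text{Mult}(\mathcal{H}_s \otimes \ell^2, \mathcal{H}_\ell)$ with symbol $R(z) = (\phi_1(z), \phi_2(z), \ldots)$. Applying Theorem \ref{20} to (CM) produces $\Psi \in \text{Mult}(\mathcal{H}_s, \mathcal{H}_s \otimes \ell^2)$ with $\Psi(\lambda_i) = e_i$. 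For $w \in \ell^\infty$, set $\phi_w := R \circ (I \otimes D_w) \circ \Psi$, where $D_w \in B(\ell^2)$ is the diagonal operator with entries $w_k$; direct evaluation gives $\phi_w(\lambda_i) = w_i a_i$, and the estimate $\|\phi_w\|_{\mathrm{Mult}} \le \|R\| \|\Psi\| \|w\|_\infty$ together with the linearity of $w \mapsto \phi_w$ yields the bounded operator of interpolation. The main obstacle is the Pick positivity check in the first step: the variational minimization over the off-diagonal entries of the test matrix is what channels the strong separation constant and the (CM) bound into the clean inequality displayed above.
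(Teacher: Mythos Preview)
Your necessity argument matches the paper's. For sufficiency, both arguments end the same way (column--row plus composition with $\Psi$ and a diagonal), but you build the column multiplier $C$ differently. The paper never checks a block Pick matrix: it first converts strong separation into uniformly bounded \emph{scalar} peak multipliers $\phi_i\in\text{Mult}(\mathcal H_s,\mathcal H_\ell)$ via Lemma~\ref{18}(b), and then simply sets $\Phi=\operatorname{diag}(\phi_1,\phi_2,\dots)\cdot\Psi$, which is automatically bounded from $\mathcal H_s$ to $\mathcal H_\ell\otimes\ell^2$ and has the right values $\Phi(\lambda_i)=a_ie_i$. Your route instead solves one vector-valued Pick problem in a single stroke, and your variational reduction to $K^2\operatorname{diag}(d_k^2)\succeq G_s^{\mathrm{norm}}$ is correct and rather elegant---it makes explicit how the interpolation constant depends on the strong-separation constant $\epsilon$ and the Carleson bound. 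The trade-off is that you are invoking a \emph{vector-valued} Pick property for the pair $(s,\ell)$: Theorem~\ref{21} is stated and used in the paper only in the scalar case, so your appeal to it (via ``$(\ell\otimes\ell^2)/s\succeq 0$'') is a slight overreach in citation. The result you need is true and follows from the same Leech-theorem argument that underlies Theorem~\ref{21}, but you should say so rather than cite the scalar statement. The paper's factorisation $\operatorname{diag}(\phi_i)\cdot\Psi$ sidesteps this entirely, needing only scalar Pick for each $\phi_i$.
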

\begin{proof} First, suppose that $\{\lambda_i\}$ is interpolating for Mult$(\mathcal{H}_s, \mathcal{H}_{\ell})$.
By Theorem \ref{3}(a), we obtain that $\{\lambda_i\}$ satisfies (CM) with respect to $\mathcal{H}_s$.  \\
Recall also (see (\ref{17})) that every $\phi\in\text{Mult}(\mathcal{H}_s, \mathcal{H}_{\ell})$ satisfies
$$|\phi(z)|\le ||\phi||_{\text{Mult}(\mathcal{H}_s, \mathcal{H}_{\ell})}\cdot\frac{||\ell_z||}{||s_z||},$$
for every $z\in X.$ We can thus define 
$$S: \text{Mult}(\mathcal{H}_s, \mathcal{H}_{\ell})\to\ell^{\infty}  $$
$$\phi\mapsto \bigg\{\phi(\lambda_i)\cdot\frac{||s_{\lambda_i}||}{||\ell_{\lambda_i}||} \bigg\}_{i\ge 1}.$$
$S$ is well-defined, linear and bounded by $||\phi||_{\text{Mult}(\mathcal{H}_s, \mathcal{H}_{\ell})}$. Since $\{\lambda_i\}$ is interpolating, $S$ is also onto $\ell^{\infty}.$ A standard application of the Open Mapping Theorem then allows us to deduce the existence of a constant $C>0$ \textit{(the constant of interpolation)}, such that for every $\{w_i\}\in\ell^{\infty}$ we can find $\phi\in \text{Mult}(\mathcal{H}_s, \mathcal{H}_{\ell})$ with the property that $\phi(\lambda_i)=w_i||\ell_{\lambda_i}||/||s_{\lambda_i}||$ and $||\phi||_{\text{Mult}(\mathcal{H}_s, \mathcal{H}_{\ell})}\le C\cdot||\{w_i\}||_{\infty}.$ In view of Lemma \ref{18}(b), this implies that $\{\lambda_i\}$ is strongly separated by $\ell.$
\par For the converse, suppose that $\{\lambda_i\}$ satisfies (CM) with respect to $s$  and is strongly separated by $\ell.$
\\ By Lemma \ref{18}(b), there exist multipliers $\{\phi_i\}\subset \text{Mult}(\mathcal{H}_s. \mathcal{H}_{\ell})$ and $M>0$ such that 
\begin{itemize}
    \item[(i)] $\phi_i(\lambda_j)=\delta_{ij}\frac{||\ell_{\lambda_i}||}{||s_{\lambda_i}||}$, for every $i, j$;
    \item[(ii)] $||\phi_i||_{\text{Mult}(\mathcal{H}_s, \mathcal{H}_{\ell})}\le M$, for every $i$.
\end{itemize}
Also, by Theorem \ref{20}, there exists a multiplier $\Psi\in\text{Mult}(\mathcal{H}_s, \mathcal{H}_s\otimes \ell^2)$ such that 
$$\Psi(\lambda_i)=\begin{pmatrix} \mbox{\hspace{0.1 cm} $\mathrm{*}$ \hspace{0.1 cm}} \\ \vdots \\  \vspace{0.06 cm} \mbox{$\mathrm{*}$} \\ 1  \vspace{0.07 cm} \\ \mbox{$\mathrm{*}$}\vspace{0.001 cm} \\ \vspace{0.001 cm}\vdots\end{pmatrix},
$$
for every $i.$\\
Consider now the bounded diagonal operator $\text{diag}\{\phi_1, \phi_2, \phi_3, \dots \}\in\text{Mult}(\mathcal{H}_s\otimes\ell^2, \mathcal{H}_{\ell}\otimes\ell^2)$ and define
$$\Phi:=\text{diag}\{\phi_1, \phi_2, \phi_3, \dots \}\cdot \Psi\in\text{Mult}(\mathcal{H}_s, \mathcal{H}_{\ell}\otimes \ell^2).$$
Notice that 
$$\Phi(\lambda_i)=\frac{||\ell_{\lambda_i}||}{||s_{\lambda_i}||}\begin{pmatrix} \mbox{\hspace{0.1 cm} 0 \hspace{0.1 cm}} \\ \vdots \\ \vspace{0.1 cm}  \mbox{0} \\ 1  \vspace{0.1 cm} \\ \mbox{0}\vspace{-0.1 cm} \\ \vspace{0.2 cm}\vdots\end{pmatrix}, $$
for every $i$. \cite[Theorem 3.18]{Column-Row} now tells us that $\Phi^{T}\in\text{Mult}(\mathcal{H}_s\otimes \ell^2, \mathcal{H}_{\ell}) $ and thus, letting $\Delta: \ell^{\infty}\to\text{Mult}(\mathcal{H}_s\otimes\ell^2)$ denote the embedding via diagonal operators, we can define 
$$T: \ell^{\infty}\to \text{Mult}(\mathcal{H}_s, \mathcal{H}_{\ell}) $$
$$\{w_i\}\mapsto \Phi^{T}\cdot \Delta(\{w_i\})\cdot\Psi.$$
$T$ is well-defined, bounded and also satisfies 
$$[T(\{w_i\})](\lambda_j)=w_j\frac{||\ell_{\lambda_j}||}{||s_{\lambda_j}||} ,$$
for every $j$. Thus, $T$ is the linear operator of interpolation with respect to $\{\lambda_i\}$ and our proof is complete.
\end{proof}

We are now ready to prove Theorem \ref{7}. Of critical
importance will be the observation (see \cite[Proposition 9.11]{Pick} for a proof) that every sequence $\{\lambda_i\}\subset X$ satisfying (CM) with respect to $\mathcal{H}_s$ can be written as a union of $n$ sequences that are weakly separated by $s,$ where $n$ is a finite integer. In this setting, it turns out that $n$-weak ((and not merely weak) separation by $\ell$ is \textit{precisely} what is missing for $\{\lambda_i\}$ to be $\text{Mult}(\mathcal{H}_s, \mathcal{H}_{\ell})$-interpolating.

\begin{proof}[Proof of Theorem \ref{7}]
By Theorem \ref{30}, every $\text{Mult}(\mathcal{H}_s, \mathcal{H}_{\ell})$-interpolating sequence satisfies (CM) with respect to $s$ and is strongly separated by $\ell,$ hence also $n$-weakly separated by $\ell$ for every $n\ge 2.$ \par
For the converse, suppose $\{\lambda_i\}\subset X$ satisfies (CM) with respect to $s$ and is $n$-weakly separated by $\ell$, for every $n\ge 2$. If $\{\lambda_i\}$ also happens to be weakly separated by $s$, then it must be interpolating for $\text{Mult}(\mathcal{H}_s)$ (and hence interpolating for $\text{Mult}(\mathcal{H}_s, \mathcal{H}_{\ell})$ by Theorem \ref{3}(b)). If not, then there exists $n\ge 2$ such that $\{\lambda_i\}$ can be written as a union of $n$ sequences that are weakly separated by $s$. In other words, there exist disjoint sequences $\{p^1_i\}, \{p^2_i\}, \dots, \{p^n_i\}\subset X $ and a number $0<c<1$ such that $\{\lambda_i\}=\cup_{k=1}\{p^{k}_j\}$
and also for every $i\neq j$, 
\begin{equation}\big|\big\langle \hat{s}_{p^1_i}, \hat{s}_{p^1_j} \big\rangle\big|^2, \text{ } \big|\big\langle \hat{s}_{p^2_i}, \hat{s}_{p^2_j} \big\rangle\big|^2,\text{ }\dots\text{ }, \text{ } \big|\big\langle \hat{s}_{p^n_i}, \hat{s}_{p^n_j} \big\rangle\big|^2\le c. \label{23} \end{equation}
Now, choose an arbitrary point from $\{\lambda_i\}$. Without loss of generality, we may choose a point $p^1_{m_1}$ from $\{p^1_i\}.$  Consider the pseudometric 
$$d_s(\lambda_1, \lambda_2)=\sqrt{1-|\langle\hat{s}_{\lambda_1}, \hat{s}_{\lambda_2} \rangle |^2}$$
associated with $\mathcal{H}_s$.
By (\ref{23}), we obtain that $d_s(p^k_i, p^k_j)\ge \sqrt{1-c},$ for every $i\neq j$ and every $k\in\{1, 2,\dots, n\}$. Now, for any fixed $k\in\{2, 3,\dots, n\}$ and $i\neq j,$ the fact that $d_s$ is a pseudometric implies
$$\sqrt{1-c}\le d_s(p^k_i, p^k_j)\le d_s(p^k_i, p^1_{m_1})+d_s(p^1_{m_1}, p^k_j). $$
Consequently, for every $k\in\{2, 3,\dots, n\}$, there exists at most one point $p^k_{m_k}$ such that $d_s(p^1_{m_1}, p^k_{m_k})<\frac{\sqrt{1-c}}{2}$ (if such a point does not exist, pick an arbitrary point of $\{p^k_i\}$ to be $p^k_{m_k}$). Hence, there exists $c'>0$ (depending only on $c$) such that for every $k\in\{1, 2,\dots, n\}$ and every $j\neq m_k$, we have 
\begin{equation} \big|\big\langle \hat{s}_{p^1_{m_1}}, \hat{s}_{p^k_j} \big\rangle\big|^2\le c'<1. \label{24} \end{equation}
Also, for every $k\in\{1, 2,\dots, n\}$ and every $j\neq m_k$, the Pick property of $s$ allows us to find a contractive multiplier $\phi^k_j\in \text{Mult}(\mathcal{H}_s)$ such that $\phi^k_j(p^k_{j})=0$ and \begin{equation} \phi^k_j(p^1_{m_1})=\sqrt{1-\big|\big\langle \hat{s}_{p^1_{m_1}}, \hat{s}_{p^k_j} \big\rangle\big|^2}. \label{941} \end{equation}
Consider now the product \begin{equation} \prod_{k=1} ^n\prod_{j\neq m_k}\phi^k_j. \label{71} \end{equation}
More precisely, we take any weak-star cluster point of the partial products. We thus obtain a contractive multiplier $\Phi\in\text{Mult}(\mathcal{H}_s)$ such that $\Phi(p^k_j)=0$, for every $j\neq m_k.$ \\
Now, since $\{\lambda_i\}$ is $n$-weakly separated by $\ell,$ Lemma \ref{18}(a) tells us that there exists a contractive multiplier $\Psi_{m_1, m_2,\dots, m_n}\in\text{Mult}(\mathcal{H}_s, \mathcal{H}_{\ell})$ such that  $\Psi_{m_1, m_2,\dots, m_n}(p^k_{m_k})=0$ for every $k\ge 2$ and also $\Psi_{m_1, m_2,\dots, m_n}(p^1_{m_1})=\epsilon\frac{\big|\big|\ell_{p^1_{m_1}}\big|\big|}{\big|\big|s_{p^1_{m_1}}\big|\big|},$ where the constant $\epsilon>0$ does not depend on the choice of points $p^1_{m_1}, \dots, p^n_{m_n}$. \\  Finally, we put 
$$\tilde{\Phi}=\Psi_{m_1, m_2,\dots, m_n}\cdot \Phi\in \text{Mult}(\mathcal{H}_s, \mathcal{H}_{\ell}).$$
This is a contractive multiplier that is zero at every point of the sequence $\{\lambda_i\}$ except $p^1_{m_1}$. We also have (by (\ref{941}))
$$\tilde{\Phi}(p^1_{m_1})=\Psi_{m_1, m_2,\dots, m_n}(p^1_{m_1})\cdot \Phi(p^1_{m_1})$$
\begin{equation} =\epsilon\frac{\big|\big|\ell_{p^1_{m_1}}\big|\big|}{\big|\big|s_{p^1_{m_1}}\big|\big|}\prod_{k=1} ^n\prod_{j\neq m_k}\sqrt{1-\big|\big\langle \hat{s}_{p^1_{m_1}}, \hat{s}_{p^k_j} \big\rangle\big|^2}.\label{26} \end{equation}
Now, the fact that $\{\lambda_i\}$ satisfies (CM) with respect to $s$ implies the existence of a constant $C>0$ such that for every $f\in \mathcal{H}_s$ the inequality 
$$\sum \frac{|f(\lambda_i)|^2}{||s_{\lambda_i}||^2}\le C ||f||^2_{\mathcal{H}_s} $$
holds true. Choosing $f=\hat{s}_{p^1_{m_1}},$ we obtain 
\begin{equation}
    \sum_{k=1}^n\sum_j \big|\big\langle \hat{s}_{p^1_{m_1}}, \hat{s}_{p^k_j} \big\rangle\big|^2 \le C.\label{27} \end{equation}
Combining (\ref{24}), (\ref{26}) and (\ref{27}), we can conclude that $$\frac{\big|\big|s_{p^1_{m_1}}\big|\big|}{\big|\big|\ell_{p^1_{m_1}}\big|\big|}\tilde{\Phi}(p^1_{m_1})$$ is bounded below by a positive number that only depends on the constants $\epsilon, c$ and $C$ and not on the specific point $p^1_{m_1}$ we started with. Thus, $\{\lambda_i\}$ is strongly separated by $\ell$. By Theorem \ref{30}, $\{\lambda_i\}$ must be $\text{Mult}(\mathcal{H}_s, \mathcal{H}_{\ell})$-interpolating. \end{proof}

\begin{remark}
In the setting of Theorem \ref{7}, suppose $\{\lambda_i\}\subset X$ is a union of $n$ disjoint sequences (where $n\ge2$) that are interpolating for $\text{Mult}(\mathcal{H}_s)$. The previous proof tells us that $\{\lambda_i\}$ is $\text{Mult}(\mathcal{H}_s, \mathcal{H}_{\ell})$-interpolating if and only if it is $n$-weakly separated by $\ell.$
\end{remark}

\subsection{A counterexample to Question \ref{4}}

Now, suppose we have a sequence $\{\lambda_i\}$ satisfying (CM) with respect to $s$ and suppose also that $n\ge 3$ is the smallest integer such that $\{\lambda_i\}$ can be written as a union of $n$ disjoint sequences that are weakly separated by $s$. Then, not even $(n-1)$-weak (let alone weak) separation by $\ell$ can, in general, guarantee that $\{\lambda_i\}$ is $\text{Mult}(\mathcal{H}_s, \mathcal{H}_{\ell})$-interpolating. This is essentially the content of Theorem \ref{8}, which we now prove.

\begin{proof}[Proof of Theorem \ref{8}]
Let $s$ be the Szeg{\H o} kernel on the unit disk $\mathbb{D}.$ For every $n\ge 3$, choose $n$ disjoint sequences $\{\lambda^{n, 1}_i\}_i, \{\lambda^{n, 2}_i\}_i, \dots, \{\lambda^{n, n}_i\}_i\subset\mathbb{D}$ that are interpolating for $\text{Mult}(\mathcal{H}_s)$ and such that their $i^{th}$ terms satisfy
\begin{equation}
    \lim_i d_s(\lambda^{n, k}_i, \lambda^{n, m}_i)=0, \label{39}
\end{equation}
for every $k, m\in\{1, 2, \dots, n\}$. We also require the existence of $c>0$ (which could depend on $n$) such that 
\begin{equation}  d_s(\lambda^{n, k}_i, \lambda^{n, m}_j)\ge c,\label{38} \end{equation}
 for all $n, i, j, k, m$ such that $i\neq j$. Finally, choose these sequences in such a way that $\lambda^{n, k}_i= \lambda^{\nu, m}_j$ is equivalent to $i=j, n=\nu$ and $k=m,$ i.e. there are no common points between them. Here is one way of constructing such sequences: let $\{a_1^1\}$ be an arbitrary point in $\DD$ and suppose that, for $m\ge 1,$ the $m$-point set $\{a^m_1, \dots, a^m_m\}$ has been determined. Then, choose $m+1$ points $a^{m+1}_1, \dots, a^{m+1}_{m+1}$ such that \begin{itemize}
     \item[(i)] $\frac{1-|a^{m+1}_i|}{1-|a^k_j|}\le \rho <1, $\hspace{0.2 cm} $\forall k\in\{1, \dots, m\}, i\in\{1, \dots, m+1\}, j\in\{1, \dots, k\}$.
     
     \item[(ii)] $d_s\big(a^{m+1}_i, a^{m+1}_j\big)\le \frac{1}{m+1},$\hspace{0.2 cm}  for all $i, j$.
 \end{itemize}
  Essentially, our sequences come in $m$-point packets (where $m$ is increasing) that are well-separated from one another but such that the points in each packet are increasingly close to each other. 
 Now, put $\{\lambda^{n, k}_i\}_{i\ge 1}=\{a^i_{c(n, k)}\}_{i\ge d(n)}$, where $c(n, k)=k+\sum^{n-1}_{j=1}j$ and $d(n)=\sum^{n}_{j=1}j$. Then, no two sequences will have any points in common. Also, by item (i), each $\{\lambda^{n, k}_i\}_{i\ge 1}$ converges to $\partial\DD$ exponentially and so must be interpolating for $\text{Mult}(\mathcal{H}_s)$. The same condition guarantees that (\ref{38}) must be valid for some constant $c>0$. Finally, item (ii) implies that (\ref{39}) is also satisfied.
 
 \par 
We now turn to the construction of the kernel $\ell$. For every $n\ge 3,$ choose $n$ linearly dependent vectors $\{v_{n, 1}, v_{n, 2},\dots, v_{n, n} \}$ in $\ell^2$ with the following property: any
choice of $n-1$ vectors among  $\{v_{n, 1}, v_{n, 2},\dots, v_{n, n} \}$ produces a linearly independent
set. Also, define $u: \mathbb{D}\to \ell^2$ by 
$$u(\lambda)=\begin{cases} 
v_{n, k}, & \text{ } \text{ if } \lambda=\lambda^{n, k}_i \text{ for some } i, n, k \\ \\
            e_1, & \text{ } \text{ for every other point } \lambda\in\mathbb{D}.
\end{cases}
$$
(The definition of $u$ on points other than $\lambda^{n, k}_i$ is not important.) Since $\lambda^{n, k}_i= \lambda^{\nu, m}_j$ is equivalent to $i=j, n=\nu$ and $k=m,$ the function $u$ is well-defined. Finally, consider the positive semi-definite kernel $g: \mathbb{D}\times\mathbb{D}\to\mathbb{C}$ given by $g(\lambda, \mu)=\langle u(\lambda), u(\mu)\rangle_{\ell^2}$ and put $\ell:=s\cdot g$.
\par Now, fix $n\ge 3$ (this will remain fixed for the rest of the proof). Since any $(n-1)$-point subset of $\{v_{n, 1}, v_{n, 2},\dots, v_{n, n} \}$ is linearly independent, there exists $\epsilon>0$ such that each vector $v_{n, j}$ always has distance greater than $\epsilon$ from the span of any $(n-2)$-point subset of the remaining $n-1$ vectors. Thus, we can deduce (in a manner identical to the proof of Lemma \ref{18}) the existence of $\epsilon_1>0$ such that for any $(n-1)$-point subset $\{\nu_1, \nu_2, \dots, \nu_{n-1}\}$ of $\{1, 2, \dots, n\}$ the matrix 
$$\big[ (1-w_j\overline{w}_i)\langle v_{n, \nu_j}, v_{n,\nu_i} \rangle \big]_{1\le i,j\le n-1},$$
is positive semi-definite, where $w_1=\epsilon_1$ and $w_2=\dots=w_{n-1}=0$. By definition of the kernel $g$, we obtain that for every such subset $\{\nu_1, \nu_2, \dots, \nu_{n-1}\}$ and every choice of $n-1$ (not necessarily distinct) integers $m_1, m_2, \dots, m_{n-1}$, the matrix 
$$\big[ (1-w_j\overline{w}_i) g(\lambda^{n,\nu_j}_{m_j}, \lambda^{n,\nu_i}_{m_i}) \big]_{1\le i,j\le n-1},$$
is positive semi-definite. Now, multiply with the corresponding Grammian associated to $s$ and apply the Schur product theorem to deduce positivity of the matrix 
    $$\big[ (1-w_j\overline{w}_i) \ell(\lambda^{n,\nu_j}_{m_j}, \lambda^{n,\nu_i}_{m_i}) \big]_{1\le i,j\le n-1}.$$
Since $(s, \ell)$ has the Pick property, this last positivity condition implies that for any $(n-1)$-point subset $\{\nu_1, \nu_2, \dots, \nu_{n-1}\}$ of $\{1, 2, \dots, n\}$ and every $m_1, m_2, \dots, m_{n-1}\ge 1$, there exists a contractive multiplier $\Phi\in\text{Mult}(\mathcal{H}_s, \mathcal{H}_{\ell})$ such that 
\begin{equation}
    \Phi(\lambda^{n, \nu_1}_{m_1})=\epsilon_1\frac{\big|\big|\ell_{\lambda^{n, \nu_1}_{m_1}}\big|\big|}{\big|\big|s_{\lambda^{n, \nu_1}_{m_1}}\big|\big|} \hspace{0.2 cm} \text{ and } \hspace{0.2 cm}\Phi(\lambda^{n, \nu_i}_{m_i})=0,
    \label{50}
\end{equation}
for all $i\in\{2, \dots, n-1\}.$  
\par Now, recall that each sequence $\{\lambda^{n,k}_i \}_i $ is interpolating for $\text{Mult}(\mathcal{H}_s)$ and hence their union $\cup_{k=1}^n\{\lambda^{n,k}_i\}$ must satisfy the Carleson measure condition for $\mathcal{H}_s$ (this is because of the elementary fact that the sum of two Carleson measures is a Carleson measure). In view of the separation condition (\ref{38}) and the Pick property of $s$, we can mimic the construction of the Blaschke-type multiplier (\ref{71}) from the proof of Theorem \ref{7} to deduce the existence of an $\epsilon_2>0$ with the property that, for every point $\lambda^{n, k}_i$, there exists a contractive multiplier $\Psi\in\text{Mult}(\mathcal{H}_s)$ such that 
\begin{equation}
    \Psi(\lambda^{n, k}_i)=\epsilon_2\hspace{0.2 cm} \text{ and } \hspace{0.2 cm}\Psi(\lambda^{n, m}_j)=0,
    \label{51}
\end{equation}
for all $m\in\{1, 2, \dots, n\}$ and every integer $j\neq i.$   \par 
Combining (\ref{50}) and (\ref{51}) (and also using the basic fact that the product of a function in $\text{Mult}(\mathcal{H}_s)$ with a function in $\text{Mult}(\mathcal{H}_s, \mathcal{H}_{\ell})$ yields a multiplier in $\text{Mult}(\mathcal{H}_s, \mathcal{H}_{\ell})$), we deduce that
for every $(n-1)$-point subset $\{\mu_1, \mu_2, \dots, \mu_{n-1}\}$ of the union $\cup_k\{\lambda^{n, k}_i\}$ there exists a multiplier $\phi\in\text{Mult}(\mathcal{H}_{s}, \mathcal{H}_{\ell})$ of norm at most $1$ such that $\phi(\mu_1)=\epsilon_1\epsilon_2\frac{||\ell_{\mu_1}||}{||s_{\mu_1}||}$ and $\phi(\mu_j)=0,$ for $j=2, 3,\dots, {n-1}$. By Lemma \ref{18}(a), we conclude that $\cup_k\{\lambda^{n, k}_i\}$ must be $(n-1)$-weakly separated by $\ell.$ 
\par We now show that $\cup_k\{\lambda^{n, k}_i\}$ is not $n$-weakly separated by $\ell.$ We proceed by contradiction; suppose instead that there exists $\epsilon>0$ such that for every 
$n$-point subset $\{\mu_1, \mu_2, \dots, \mu_n\}$ of $\cup_k\{\lambda^{n, k}_i\}$ the matrix 
$$\big[(1-w_j\overline{w}_i)\ell(\mu_i, \mu_j) \big]_{1\le i,j\le n} $$
is positive semi-definite, where $w_1=\epsilon$ and $w_2=\dots=w_n=0$. Choosing $\mu_k=\lambda^{n, k}_m$ gives us the positivity of
$$ \big[(1-w_j\overline{w}_i)\ell(\lambda^{n,i}_m, \lambda^{n,j}_m) \big]_{1\le i,j\le n}, $$
for every $m\ge 1.$ Next, multiply the previous matrix by the transpose of $[s(\lambda^{n,i}_m, \lambda^{n,j}_m)]$ (which must be positive semi-definite as well) and the dyad $[s(\lambda^{n,i}_m, \lambda^{n,i}_m)^{-1}s(\lambda^{n,j}_m, \lambda^{n,j}_m)^{-1}]$. The Schur product theorem then allows us to obtain
\begin{equation} \det\bigg[(1-w_j\overline{w}_i)g(\lambda^{n,i}_m, \lambda^{n,j}_m)\frac{|s(\lambda^{n,i}_m, \lambda^{n,j}_m)|^2}{s(\lambda^{n,i}_m, \lambda^{n,i}_m)s(\lambda^{n,j}_m, \lambda^{n,j}_m)}        \bigg]_{1\le i,j\le n}\ge 0. \label{72} \end{equation}

However, condition (\ref{39}) implies that $\lim_m  \frac{|s(\lambda^{n,i}_m, \lambda^{n,j}_m)|^2}{s(\lambda^{n,i}_m, \lambda^{n,i}_m)s(\lambda^{n,j}_m, \lambda^{n,j}_m)} =1,$ for every $i, j\in\{1, \dots, n\}$. Thus, letting $m\to\infty$ in (\ref{72}) and using the definition of $g$, we obtain
$$\det\big[(1-w_j\overline{w}_i)\langle v_{n, i}, v_{n, j}\rangle \big]_{1\le i,j\le n}\ge 0,$$
which implies 
$$\det\big[\langle v_{n, i}, v_{n, j}\rangle \big]_{1\le i,j\le n}\ge \epsilon^2||v_{n,_1}||^2\det\big[\langle v_{n, i}, v_{n, j}\rangle \big]_{2\le i, j\le n}>0,$$
a contradiction, as the vectors $\{v_{n, 1}, \dots,  v_{n, n}\}$ are linearly dependent. \par 
To sum up, we have showed that, for every $n\ge 3$, the sequence $\cup_{k=1}^n\{\lambda^{n, k}_i\}$ satisfies (CM) with respect to $s$ and is $(n-1)$-weakly separated, but not $n$-weakly separated by $\ell$. The proof is complete.
\end{proof}

\begin{remark}
Notice that the choice of the Szeg{\H o} kernel $s$ in our counterexample is not really important; all that was required for the proof to go through was a complete Pick kernel $s$ containing, for every $n\ge 3$, disjoint sequences $\{\lambda^{n, 1}_i\}_i, \{\lambda^{n, 2}_i\}_i, \dots, \{\lambda^{n, n}_i\}_i $ in the underlying set that satisfy (\ref{39}) and (\ref{38}).
\end{remark}
\begin{remark}
Here is a simpler counterexample (which only works for  \textit{fixed} $n$). Choose $n\ge 3$ and consider the kernel $s(\lambda, \mu)=\frac{1}{1-z^n\overline{w}^n}$ defined on $\DD\times\DD$. Also, let $\omega_1, \dots, \omega_n$ denote the $n^{th}$ roots of unity and choose $n$ vectors $v_1, \dots, v_n$ in $\mathbb{C}^{n-1}$ with the property that every choice of $n-1$ vectors among them produces a linearly independent set. Put
$$u(\lambda)=\begin{cases} 
v_{k}, & \text{ } \text{ if } \lambda=\frac{1}{2}\omega_k  \\ \\
            v_1, & \text{ } \text{ for every other point } \lambda\in\mathbb{D},
\end{cases}
$$
and set $g(\lambda, \mu)=\langle u(\lambda), u(\mu))\rangle$, $\ell:=s\cdot g$ and $z_k=\frac{1}{2}\omega_k$. By assumption, there exists $\epsilon>0$ such that the matrix 
$$\big[ (1-w_j\overline{w}_i)g(z_{\mu_i}, z_{\mu_j})\big]_{1\le i,j \le n-1} $$
is positive semi-definite for every $(n-1)$-point subset $\{\mu_1, \dots, \mu_{n-1}\}$ of $\{1, \dots, n\}$, where $w_1=\epsilon$ and $w_2=\dots=w_{n-1}=0$. But since $s(z_i, z_j)=4/3$ for every $i, j$, we immediately obtain the positivity of the matrix 
$$\big[ (1-w_j\overline{w}_i)\ell(z_{\mu_i}, z_{\mu_j})\big]_{1\le i,j \le n-1} .$$

Thus, the sequence $\{z_1, \dots, z_n\}$ is $(n-1)$-weakly separated by $\ell$ and also (trivially) satisfies the Carleson measure condition for $\mathcal{H}_s$. However, the kernel functions $\ell_{z_1}, \dots, \ell_{z_n}$ are linearly dependent, which implies that $\{z_1, \dots, z_n\}$ is not $n$-weakly separated. 

\end{remark}
 \begin{remark}
 Given the artificial nature of the kernel $g$, one might wonder whether more natural counterexamples to Question \ref{4} can be found. Subsection \ref{304} contains a ``nicer" counterexample involving holomorphic kernels on the bidisk. 
 \end{remark}

 \section{When is (CM)+(WS) sufficient?} \label{6}

 Once more, suppose that $s, \ell$ are two kernels on $X$ such that $\ell/s>>0$ and $s$ is a normalized complete Pick kernel. The proof of Theorem \ref{8} shows that a potential reason for the failure of the $\mathcal{H}_s$-Carleson measure condition and weak separation by $\ell$ to always be sufficient conditions for $\text{Mult}(\mathcal{H}_s. \mathcal{H}_{\ell})$-interpolation is that $X$ might contain weakly separated (by $\ell$) sequences that are not $n$-weakly separated by $\ell$ for some $n\ge 3.$ This motivates the following definition. 
 \begin{definition}
 Let $\ell$ be a kernel on a set $X.$ $\ell$ will be said to have the \textit{automatic separation property} if any sequence $\{\lambda_i\}\subset X$ that is weakly separated by $\ell$ must always be $n$-weakly separated by $\ell$ for every $n\ge 3.$ Such kernels will also be called $\textit{AS kernels.}$
 \end{definition}
 \begin{remark} The AS property is not very intuitive from a geometric viewpoint. Indeed, if $\ell$ has the automatic separation property, then, for every $n\ge 2$, a normalized kernel function $\hat{\ell}_{\mu}$ can be ``close" to the span of $n$ other normalized kernel functions if and only if it is actually ``close" to one of them (see \cite[Section 6]{Albertothesis} for a direct proof of the fact that the Szeg{\H o} kernel on $\DD$ has this property). Nevertheless, as we shall see in Section \ref{301}, it turns out that a surprisingly large number of well-known function spaces possess AS kernels.

\end{remark}
 As an immediate consequence of Theorem \ref{7}, we obtain that Question \ref{4} has a positive answer for every pair $(s, \ell)$ such that $\ell$ has the automatic separation property.
 \begin{corollary}\label{507}
 Let $s, \ell$ be two kernels on a set $X$ such that $\ell$ is an AS kernel and $s$ is a normalized complete Pick factor of $\ell.$ 
 Then, a sequence $\{\lambda_i\}\subset X$
is interpolating for $\text{Mult}(\mathcal{H}_{s}, \mathcal{H}_{\ell})$ if and only if it satisfies the Carleson measure condition for $\mathcal{H}_s$ and is weakly separated by $\ell$.
 \end{corollary}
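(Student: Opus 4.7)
The plan is to reduce the corollary to a direct combination of Theorem \ref{7} with the definition of the automatic separation property. No new analytic machinery is needed; the content of the corollary is that the AS property is exactly the bridge which upgrades weak separation (the condition appearing in Question \ref{4}) to the full $n$-weak separation condition (the condition produced by Theorem \ref{7}).

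For the forward implication, one invokes the general observation, recorded in the introduction, that for an arbitrary pair of kernels $(k,\ell)$ every $\text{Mult}(\mathcal{H}_k, \mathcal{H}_\ell)$-interpolating sequence satisfies the Carleson measure condition for $\mathcal{H}_k$ and is weakly separated by $\ell$. Applied with $k=s$, this gives both necessary conditions at once. (Alternatively, note that Theorem \ref{7} produces $n$-weak separation by $\ell$ for all $n\ge 2$, and $2$-weak separation is by definition the same as weak separation.)

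For the converse, suppose $\{\lambda_i\}\subset X$ satisfies the Carleson measure condition for $\mathcal{H}_s$ and is weakly separated by $\ell$. Since $\ell$ is an AS kernel, the definition immediately yields that $\{\lambda_i\}$ is $n$-weakly separated by $\ell$ for every $n\ge 3$; together with weak separation itself (which is $2$-weak separation) this gives $n$-weak separation by $\ell$ for every $n\ge 2$. Thus $\{\lambda_i\}$ meets both hypotheses of Theorem \ref{7}, and we conclude that $\{\lambda_i\}$ is interpolating for $\text{Mult}(\mathcal{H}_s, \mathcal{H}_\ell)$.

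There is no genuine obstacle in this argument: it is a purely formal consequence of Theorem \ref{7} and the definition of the AS property, which is exactly why this corollary is stated immediately after the AS notion is introduced. The real mathematical work now shifts to identifying natural classes of AS kernels, which is taken up in Section \ref{301}.
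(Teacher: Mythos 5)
Your proposal is correct and coincides with the paper's (essentially unwritten) argument: the corollary is stated as an immediate consequence of Theorem \ref{7}, obtained precisely by combining that theorem with the definition of the AS property, and the forward direction follows either from Theorem \ref{7} itself or from the general necessity remark in Section 1.2. Nothing is missing.
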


 We now show that, under some additional weak hypotheses on the kernels $s$ and $\ell$, Question \ref{4} having a positive answer for the pair $(s, \ell)$ is actually \textit{equivalent} to $\ell$ being an AS kernel.
 
 \begin{theorem} \label{89}
 Suppose $X$ is a topological space, $\ell$ a kernel on $X$ with a normalized complete Pick factor $s$ and the following properties are satisfied:
 \begin{itemize}
 
      \item[(Q1)] $\ell: X\times X\to\mathbb{C}$ is continuous;
      \item[(Q2)] If $\{\lambda_i\}\subset X$ satisfies $||\ell_{\lambda_i}||\to\infty$, then $||\ell_{\lambda_i}||^{-1}\ell(\lambda_i, \mu)\to 0$ for every $\mu\in X$;
      \item[(Q3)] Let $\{\lambda_i\}\subset X$. Then,  either $||\ell_{\lambda_i} ||\to\infty$ or $\{\lambda_i\}$ contains a subsequence converging to a point inside $X$;
      \item[(Q4)] If $||\ell_{\lambda_i} ||\to\infty$, then $||s_{\lambda_i} ||\to\infty$.
       \end{itemize} 
      Then, the following assertions are equivalent:
      \begin{itemize}
          \item[(i)] For every $\{\lambda_i\}\subset X$,  $\{\lambda_i\}$ is interpolating for $\text{Mult}(\mathcal{H}_s. \mathcal{H}_{\ell})$ if and only if it satisfies the Carleson measure condition for $\mathcal{H}_s$ and is weakly separated by $\ell$. 
          \item[(ii)]  
          $\ell$ has the automatic separation property.
       \end{itemize}

 \end{theorem}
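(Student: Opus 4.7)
The direction (ii) $\Rightarrow$ (i) is immediate from Corollary \ref{507}, so the content is the converse, which I plan to prove by contrapositive. Starting from a sequence $\{\lambda_i\} \subset X$ that is weakly separated by $\ell$ but not $n$-weakly separated by $\ell$, with $n \ge 3$ chosen minimal, I extract $n$-tuples $P^{(k)} = \{\mu_1^{(k)}, \dots, \mu_n^{(k)}\}$ with $d_k := \operatorname{dist}(\hat{\ell}_{\mu_1^{(k)}}, \operatorname{span}\{\hat{\ell}_{\mu_j^{(k)}}\}_{j \ge 2}) \to 0$, and apply (Q3) to pass to a subsequence on which each slot $j$ either converges to some $\mu_j^* \in X$ or escapes in the sense $\|\ell_{\mu_j^{(k)}}\| \to \infty$. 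The target is to construct from this data a sequence that satisfies (CM) for $\mathcal{H}_s$, is weakly separated by $\ell$, yet fails $n$-weak separation by $\ell$; by Theorem \ref{7} such a sequence cannot be $\text{Mult}(\mathcal{H}_s, \mathcal{H}_\ell)$-interpolating, directly contradicting (i).

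A crucial preliminary observation is that $s$ itself satisfies an analog of (Q2) for free. Using the representation $s_w(z) = 1/(1 - \langle b(z), b(w)\rangle)$, the numerator $|s(\mu, z)|$ is bounded uniformly in $z$ for fixed $\mu$ (since $|\langle b(z), b(\mu)\rangle| \le \|b(\mu)\| < 1$), while (Q4) forces $\|s_z\| \to \infty$ whenever $\|\ell_z\| \to \infty$. Hence $|\langle \hat{s}_z, \hat{s}_\mu\rangle| \to 0$ for every fixed $\mu$ along any $\ell$-escaping sequence, and this is what lets hypotheses phrased purely in terms of $\ell$ control the $s$-Grammian.

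In the case where at least one slot converges (reorder so it is slot $1$), the minimality of $n$ gives $(n-1)$-weak separation of $\{\lambda_i\}$, uniformly lower-bounding the Gramians of $\{\hat{\ell}_{\mu_j^{(k)}}\}_{j \ge 2}$; consequently the orthogonal projection coefficients $c_j^{(k)}$ in $\hat{\ell}_{\mu_1^{(k)}} - \sum_{j \ge 2} c_j^{(k)} \hat{\ell}_{\mu_j^{(k)}} \to 0$ remain bounded, and after a further subsequence $c_j^{(k)} \to c_j^*$. Taking weak limits, escaping slots contribute $0$ by (Q2) while converging slots contribute $\hat{\ell}_{\mu_j^*}$ by (Q1), yielding $\hat{\ell}_{\mu_1^*} = \sum_{j \in J_c \setminus \{1\}} c_j^* \hat{\ell}_{\mu_j^*}$. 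Unit norm of the left side forces $|J_c| \ge 2$, and WS together with (Q1) forces the points $\mu_j^*$ ($j \in J_c$) to be distinct with pairwise linearly independent normalized kernels; collectively, however, they are linearly dependent. This finite set already violates the conclusion of (i), and I extend it to an infinite counterexample by appending a sparsified $\ell$-escaping tail drawn from $\{\lambda_i\}$ (whose existence follows from (Q3) applied to the WS sequence $\{\lambda_i\}$), with inner products against the finite set controlled by (Q2) and the preliminary observation.

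In the case where every slot escapes, (Q4) gives $\|s_{\mu_j^{(k)}}\| \to \infty$ for every $j$, so both $\ell$- and $s$-pointwise decay apply. A diagonal extraction yields a subsequence of tuples $\{P^{(k_m)}\}$ for which every cross-tuple $\hat{\ell}$- and $\hat{s}$-inner product is at most $2^{-\max(m, m')}$. The $s$-Grammian of $\bigcup_m P^{(k_m)}$ then decomposes as uniformly bounded $n \times n$ diagonal blocks plus a geometrically summable off-block remainder, hence is bounded as an operator on $\ell^2$ by Schur's test; Lemma \ref{131} then delivers (CM) for $\mathcal{H}_s$. Weak separation by $\ell$ is preserved (within-tuple by inheritance, cross-tuple by construction), while each tuple $P^{(k_m)}$ still witnesses failure of $n$-weak separation, producing the desired counterexample. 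The main obstacle is the preliminary observation giving free pointwise $\hat{s}$-decay: without it, sparsification could only be arranged at the $\ell$-level, which does not suffice to bound the $s$-Grammian; a secondary technicality is the Case A weak-limit step, whose validity rests on the $(n-1)$-weak separation coefficient bound and on the compatibility of weak and pointwise convergence provided by (Q1) and (Q2).
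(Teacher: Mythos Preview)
Your argument is correct and follows the same contrapositive skeleton as the paper, but the execution differs in two substantive ways. In the convergent case, you run a weak-limit argument on the bounded projection coefficients to produce a finite linearly dependent set of limit kernels, whereas the paper works purely with Gram determinants and Lemma~\ref{708}; your route is slicker but obscures something the paper makes explicit, namely that the \emph{mixed} sub-case (some slots converge, some escape) is actually impossible---your linear dependence among $|J_c|<n$ limit kernels would contradict the $(n-1)$-weak separation inherited by continuity, so in fact $|J_c|=n$ always. In the all-escape case, your preliminary observation that normalized CP kernels automatically satisfy pointwise decay (from $|s(\mu,z)|\le 1/(1-\|b(\mu)\|)$) together with the direct Schur-test bound on the $s$-Grammian is more self-contained than the paper's appeal to \cite[Proposition~5.1]{InterpolatingPick}, and is worth keeping. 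One small wrinkle: your ``infinite extension'' in Case~A is unnecessary---the finite set $\{\mu_j^*\}_{j\in J_c}$ is already a sequence witnessing the failure of~(i), as the paper does in its Case~1---and your justification for the existence of an $\ell$-escaping tail in $\{\lambda_i\}$ via~(Q3) is not quite right, since~(Q3) is a dichotomy, not a guarantee of escape.
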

\begin{proof}
Let $\ell$ be a kernel with a complete Pick factor $s$ defined on a topological space $X$ such that properties (Q1)-(Q4) are all satisfied.  \par 
If (ii) holds, then (i) must also hold by Corollary \ref{507}. \par 
Now, suppose that (ii) fails. Thus, there exists a sequence $\{\lambda_i\}$ and $n\ge 2$ such that $\{\lambda_i\}$ is $n$-weakly separated but not $(n+1)$-weakly separated by $\ell$. This implies the existence of $n+1$ subsequences $\{\lambda^1_m\}_m, \{\lambda^2_m\}_m, \dots, \{\lambda^{n+1}_m\}_m\subset \{\lambda_i\}$ (which may contain repeated points) such that the points $\lambda^1_m, \lambda^2_m, \dots, \lambda^{n+1}_m$ are distinct from one another, for every $m$, and also   
\begin{equation}\lim_m\text{dist}\big(\hat{\ell}_{\lambda^1_m}, \text{ span}\big\{\hat{\ell}_{\lambda^2_m}, \dots, \hat{\ell}_{\lambda^{n+1}_m}\big\} \big)=0. \label{81}
\end{equation}
Since 
$$\Big[\text{dist}\big(\hat{\ell}_{\lambda^1_m}, \text{ span}\big\{\hat{\ell}_{\lambda^2_m}, \dots, \hat{\ell}_{\lambda^{n+1}_m}\big\} \big)\Big]^2=\frac{\det\big[\langle \hat{\ell}_{\lambda^i_m}, \hat{\ell}_{\lambda^j_m} \rangle \big]_{1\le i, j\le n+1}}{\det\big[\langle \hat{\ell}_{\lambda^i_m}, \hat{\ell}_{\lambda^j_m} \rangle \big]_{2\le i, j\le n+1}}, $$
condition (\ref{81}) (and the fact that all determinants involved are uniformly bounded) implies that 
\begin{equation}\lim_m \det\big[\langle \hat{\ell}_{\lambda^i_m}, \hat{\ell}_{\lambda^j_m} \rangle \big]_{1\le i, j\le n+1}=0. \label{82}
\end{equation}

By $n$-weak separation, we also obtain the existence of $\epsilon>0$ such that for every $n$-point subset $\{\mu_1, \dots, \mu_n\}$ of $\{\lambda_i\}$ we have 
 \begin{equation}\text{dist}\big(\hat{\ell}_{\mu_1}, \text{ span}\big\{\hat{\ell}_{\mu_2}, \dots, \hat{\ell}_{\mu_n}\big\} \big)>\epsilon. \label{83}
\end{equation}
We now use (Q1) and (Q2) to deduce a useful lemma about the behavior of the pseudometric $d_{\ell}$. 
\begin{lemma} \label{85}
Let $\ell$ be a kernel on $X$ satisfying (Q1) and (Q2). If $\{w_i\}$ and $\{z_i\}$ are two sequences in $X$ such that $w_i\to w$ for some $w\in X$ and $||\ell_{z_i}||\to\infty$, then 
$$d_{\ell}(w_i, z_i)\to 1.$$
\end{lemma}
\begin{proof}[Proof of Lemma \ref{85}]
Since $w_i\to w,$ continuity of $\ell$ (property (Q1)) implies that $d_{\ell}(w_i, w)\to 0.$ Also, by (Q2), we obtain that $\langle \hat{\ell}_{z_i}, f\rangle\to 0$ whenever $f$ is a finite linear combination of kernel functions. But since linear combinations of kernel functions are dense in $\mathcal{H}_{\ell},$ it must be true that $\hat{\ell}_{z_i}\to 0$ weakly in $\mathcal{H}_{\ell}$. Finally, $d_{\ell}$ is a pseudometric and so we can write 
$$-d_{\ell}(w, w_i)+d_{\ell}(w, z_i)\le d_{\ell}(w_i, z_i)\le 1,$$
where $d_{\ell}(w, w_i)\to 0$ and $d_{\ell}(w, z_i)=\sqrt{1-|\langle \hat{\ell}_{w}, \hat{\ell}_{z_i}  \rangle|^2}\to 1,$ as $\hat{\ell}_{z_i}\to 0$ weakly. Thus,
$d_{\ell}(w_i, z_i)\to 1$ and the proof is complete.
\end{proof}
We can also assume, without loss of generality, that each subsequence $\{\lambda^k_m\}_m$ either satisfies $||s_{\lambda^k_m}||\to\infty$ and $||\ell_{\lambda^k_m}||\to\infty$ or converges to a point $p_k\in X$. This is possible because of (Q3) and (Q4) (we can keep extracting subsequences until we have the desired properties). There are now three separate cases to examine.
 \par
 First, suppose that all of the sequences $\{\lambda^k_m\}_m$ converge to points $p_k$ in $X$. Combining (\ref{82}) with the continuity of $\ell$ (property (Q1)), we deduce that 
 
 \begin{equation} \det\big[\langle \hat{\ell}_{p_i}, \hat{\ell}_{p_j} \rangle \big]_{1\le i, j\le n+1}=0. 
 \label{84}
  \end{equation}
Hence, the kernel functions $\ell_{p_1}, \dots, \ell_{p_{n+1}}$ are linearly dependent. Consider the finite sequence $\{p_1, \dots, p_{n+1}\}\subset X$. This sequence trivially satisfies the Carleson measure condition for $\mathcal{H}_s$, is $n$-weakly separated (because of (\ref{83})), and hence weakly separated, but not $(n+1)$-weakly separated by $\ell$ (because of (\ref{84})). Thus, (i) fails in this case. \par
 Suppose now that at least one sequence $\{\lambda^k_m\}_m$ satisfies $||s_{\lambda^k_m}||\to\infty$ and $||\ell_{\lambda^k_m}||\to\infty$.  Suppose also that at least one sequence $\{\lambda^{k'}_m\}_m$ converges to a point $p_{k'}\in X$. We will arrive at a contradiction. Without loss of generality, we can reindex our sequences so that $\{\lambda^1_m\}, \dots, \{\lambda^r_m\}$ converge to points $p_1, \dots, p_r$ in $X,$ while $\{\lambda^{r+1}_m\}, \dots, \{\lambda^{n+1}_m\}$ satisfy $||s_{\lambda^k_m}||\to\infty$ and $||\ell_{\lambda^k_m}||\to\infty$ (where $1\le r\le n$). In view of (\ref{83}), we can write 
 \begin{equation}
     \det\big[\langle \hat{\ell}_{\lambda^i_m}, \hat{\ell}_{\lambda^j_m} \rangle \big]_{1\le i, j\le r}\ge \epsilon^2 \det\big[\langle \hat{\ell}_{\lambda^i_m}, \hat{\ell}_{\lambda^j_m} \rangle \big]_{2\le i, j\le r}\ge \dots \ge \epsilon^{2(r-1)}. 
     \label{86}
 \end{equation}
 Similarly, we obtain 
  \begin{equation}
     \det\big[\langle \hat{\ell}_{\lambda^i_m}, \hat{\ell}_{\lambda^j_m} \rangle \big]_{r+1\le i, j\le n+1} >\epsilon^{2(n-r)}. 
     \label{87}
 \end{equation}
 Now, Lemma \ref{85} tells us that (as $m\to\infty$)
   \begin{equation}
    \langle \hat{\ell}_{\lambda^i_m}, \hat{\ell}_{\lambda^j_m}\rangle \to 0, \hspace{0.2 cm} \forall i\in\{1, \dots, r\}, \text{   } \forall j\in\{r+1, \dots, n+1\}.
     \label{88}
 \end{equation}
 We can thus write 
 $$\det\big[\langle \hat{\ell}_{\lambda^i_m}, \hat{\ell}_{\lambda^j_m} \rangle \big]_{1\le i, j\le n+1}=$$ $$=\Big(\det\big[\langle \hat{\ell}_{\lambda^i_m}, \hat{\ell}_{\lambda^j_m} \rangle \big]_{1\le i, j\le r}\Big)\Big(\det\big[\langle \hat{\ell}_{\lambda^i_m}, \hat{\ell}_{\lambda^j_m} \rangle \big]_{r+1\le i, j\le n+1}\Big)+e_m,$$
 where $e_m\to 0$ as $m\to\infty$ because of (\ref{88}). Hence, letting $m\to\infty$ in the previous equality and using (\ref{86}) and (\ref{87}) gives us 
 $$\liminf_m \det\big[\langle \hat{\ell}_{\lambda^i_m}, \hat{\ell}_{\lambda^j_m} \rangle \big]_{1\le i, j\le n+1}\ge \epsilon^{2(n-1)}>0,$$
 a contradiction.  \par 
 Finally, suppose that all of our sequences $\{\lambda^k_m\}_m$ satisfy $||\ell_{\lambda^k_m}||\to\infty$ and  $||s_{\lambda^k_m}||\to\infty$. By \cite[Proposition 5.1]{InterpolatingPick}, we can extract a subsequence $\{\lambda^k_{m_j}\}_j$ from each $\{\lambda^k_m\}_m$  that is interpolating for $\text{Mult}(\mathcal{H}_s)$. Hence, the union $\cup_{k=1}^{n+1}\{\lambda^k_{m_j}\}$ will satisfy the Carleson measure condition for $\mathcal{H}_s$. By assumption, it will also be $n$-weakly separated but not $(n+1)$-weakly separated by $\ell$. Thus, (i) fails and our proof is complete.
\end{proof}
\begin{remark}
Property (Q2) is definitely satisfied whenever each kernel function $\ell_{\mu}$ is bounded on $X$, however this is not necessary in general. For instance, the kernel $\ell(\lambda, \mu)=e^{\lambda\overline{\mu}}$ of the Bargmann-Fock space on $\mathbb{C}$ satisfies (Q2), even though not every $\ell_{\mu}$ is a bounded function.
\end{remark}
\begin{remark}
There is no mention of the kernel $s$ in the statement of (ii). Thus, the answer to Question \ref{4} is entirely independent of the complete Pick factor $s,$ at least for pairs $(s, \ell)$ satisfying (Q1)-(Q4).
\end{remark}

 Before we proceed, we record the following useful lemma. It essentially says that factoring a kernel does not increase the distance between the normalized kernel functions. 
\begin{lemma} \label{944}
Suppose $g, \ell$ are two reproducing kernels on $X$ such that $\ell/g$ is positive semi-definite. Then, 
$$\text{dist}\big(\hat{g}_{\mu_1}, \text{ span}\big\{\hat{g}_{\mu_2}, \dots, \hat{g}_{\mu_n}\big\} \big)\le \text{dist}\big(\hat{\ell}_{\mu_1}, \text{ span}\big\{\hat{\ell}_{\mu_2}, \dots, \hat{\ell}_{\mu_n}\big\} \big), $$
for any $n$-point subset  $\{\mu_1, \mu_2, \dots, \mu_n\}$ of $X$ (where $n\ge 2$).
\end{lemma}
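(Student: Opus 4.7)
The plan is to exploit the factorization $\ell = g \cdot h$, where $h := \ell/g$ is a positive semi-definite kernel on $X$ by hypothesis, and reduce the desired inequality to the monotonicity of orthogonal projections under subspace inclusion.

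The first step is to construct an isometric embedding $V : \mathcal{H}_\ell \hookrightarrow \mathcal{H}_g \otimes \mathcal{H}_h$ determined on kernel functions by $V \ell_\mu = g_\mu \otimes h_\mu$. The Grammians match,
\[
\langle g_\mu \otimes h_\mu,\, g_\nu \otimes h_\nu \rangle = g(\nu, \mu)\,h(\nu, \mu) = \ell(\nu, \mu) = \langle \ell_\mu, \ell_\nu \rangle,
\]
so this formula extends linearly to an isometry from the dense span of kernel functions in $\mathcal{H}_\ell$ onto a closed subspace of $\mathcal{H}_g \otimes \mathcal{H}_h$.

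Next, I would recast the inequality in terms of projections. Denoting by $P^k$ the orthogonal projection in $\mathcal{H}_k$ onto $\text{span}\{k_{\mu_2}, \ldots, k_{\mu_n}\}$, the elementary identity
\[
\text{dist}^2\bigl(\hat{k}_{\mu_1},\, \text{span}\{\hat{k}_{\mu_2}, \ldots, \hat{k}_{\mu_n}\}\bigr) \;=\; 1 - \frac{\|P^k k_{\mu_1}\|^2}{\|k_{\mu_1}\|^2}
\]
reduces the task to establishing
\[
\frac{\|P^\ell \ell_{\mu_1}\|^2}{\|\ell_{\mu_1}\|^2} \;\le\; \frac{\|P^g g_{\mu_1}\|^2}{\|g_{\mu_1}\|^2}.
\]

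For the key step, observe that under $V$ the subspace $\text{span}\{\ell_{\mu_j}\}_{j=2}^n$ is mapped isometrically onto $\text{span}\{g_{\mu_j} \otimes h_{\mu_j}\}_{j=2}^n$, which is contained in the larger subspace $M := \text{span}\{g_{\mu_j}\}_{j=2}^n \otimes \mathcal{H}_h$, whose orthogonal projection is precisely $P^g \otimes I$. Monotonicity of projections under subspace inclusion then yields
\[
\|P^\ell \ell_{\mu_1}\|^2 \;\le\; \|(P^g \otimes I)(g_{\mu_1} \otimes h_{\mu_1})\|^2 \;=\; \|P^g g_{\mu_1}\|^2 \cdot \|h_{\mu_1}\|^2.
\]
Combined with the tensorial factorization $\|\ell_{\mu_1}\|^2 = \|g_{\mu_1}\|^2 \cdot \|h_{\mu_1}\|^2$, dividing through yields the required inequality. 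No serious obstacle is anticipated; the only step requiring any care is the setup of the embedding $V$, but this is a standard consequence of matching Grammians.
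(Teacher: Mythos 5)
Your argument is correct, and it takes a genuinely different route from the paper. The paper proves the lemma via Lemma \ref{708} (the determinant formula for distances), Sylvester's criterion, and the Schur product theorem: starting from $\text{dist}(\hat{g}_{\mu_1}, \text{span}\{\hat{g}_{\mu_j}\}) > \epsilon$, it builds the positive semi-definite matrix $[(1-w_j\overline{w}_i)\langle \hat{g}_{\mu_i}, \hat{g}_{\mu_j}\rangle]$ with $w_1=\epsilon$, $w_2=\dots=w_n=0$, Schur-multiplies by the positive semi-definite matrix $[\langle\hat\ell_{\mu_i},\hat\ell_{\mu_j}\rangle/\langle\hat g_{\mu_i},\hat g_{\mu_j}\rangle]$, and reads off the reverse distance bound. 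You instead realize the factorization $\ell = g\cdot h$ at the level of spaces by the isometry $V:\mathcal{H}_\ell\to\mathcal{H}_g\otimes\mathcal{H}_h$, $V\ell_\mu = g_\mu\otimes h_\mu$, and reduce everything to monotonicity of orthogonal projections: the image of $\text{span}\{\ell_{\mu_j}\}_{j\ge 2}$ sits inside $\text{span}\{g_{\mu_j}\}_{j\ge 2}\otimes\mathcal{H}_h$, whose projection is $P^g\otimes I$. Both proofs are roughly the same length, but your route is more structural/coordinate-free and sidesteps the implicit requirement in the Schur-product step that the entries $g(\mu_i,\mu_j)$ be nonzero (so that the ratio matrix be well-defined), whereas the paper's proof is self-contained given the determinant lemma already established and fits naturally into the Sylvester/Schur toolkit used repeatedly elsewhere in the paper. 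One small point worth spelling out if you write this up: the equality $\|P_S\ell_{\mu_1}\|_{\mathcal{H}_\ell}=\|P_N(V\ell_{\mu_1})\|$ (where $N=V(S)$ and $P_N$ is the ambient projection in $\mathcal{H}_g\otimes\mathcal{H}_h$) holds because $V\ell_{\mu_1}$ and $N$ both lie in the closed subspace $\text{ran}\,V$, so the nearest-point problem defining $P_N(V\ell_{\mu_1})$ has the same answer computed in $\text{ran}\,V$ as in the whole tensor product; this is the one place that could trip up a reader.
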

\begin{proof} 
Assume that there exists $\epsilon>0$ such that $$\text{dist}\big(\hat{g}_{\mu_1}, \text{ span}\big\{\hat{g}_{\mu_2}, \dots, \hat{g}_{\mu_n}\big\} \big)> \epsilon.$$
In view of Lemma \ref{708}, we obtain 
$$\frac{\det[\langle \hat{g}_{\mu_i},  \hat{g}_{\mu_j} \rangle]_{1\le i,j\le m}}{\det[\langle \hat{g}_{\mu_i},  \hat{g}_{\mu_j} \rangle]_{2\le i,j\le m}}>\epsilon^2, \hspace{0.3 cm} \text{ for all }m\in\{2, \dots, n\}.$$
Hence, from Sylvester's Criterion, we can deduce the positivity of the matrix
$$[(1-w_j\overline{w}_i)\langle \hat{g}_{\mu_i},  \hat{g}_{\mu_j} \rangle]_{1\le i,j\le n},$$
where $w_1=\epsilon$ and $w_2=\dots=w_n=0.$ Taking the Schur product with the positive semi-definite matrix $[\langle \hat{\ell}_{\mu_i},  \hat{\ell}_{\mu_j}\rangle/\langle \hat{g}_{\mu_i}, \hat{g}_{\mu_j}\rangle]$ then gives us

$$[(1-w_j\overline{w}_i)\langle \hat{\ell}_{\mu_i},  \hat{\ell}_{\mu_j} \rangle]_{1\le i,j\le n}>>0,$$
which implies that 
$$\text{dist}\big(\hat{\ell}_{\mu_1}, \text{ span}\big\{\hat{\ell}_{\mu_2}, \dots, \hat{\ell}_{\mu_n}\big\} \big)=\sqrt{\frac{\det[\langle \hat{\ell}_{\mu_i},  \hat{\ell}_{\mu_j} \rangle]_{1\le i,j\le n}}{\det[\langle \hat{\ell}_{\mu_i},  \hat{\ell}_{\mu_j} \rangle]_{2\le i,j\le n}}}\ge \epsilon.$$
This concludes the proof of the lemma.
\end{proof}
Let us now look at an example of a pair $(s, \ell)$ for which Question \ref{4} has a positive answer but such that $\ell$ is not an AS kernel.
\begin{exmp}
Let $n\ge 2$. Consider the restricted Szeg{\H o} kernel $s(z, w)=\frac{1}{1-z\overline{w}}$ on $\frac{1}{2}\DD$ and choose $n+1$ disjoint sequences $\{\lambda^1_m\}, \{\lambda^2_m\}, \dots, \{\lambda^{n+1}_m\}\subset\frac{1}{2}\mathbb{D}$ that converge to the boundary of $\frac{1}{2}\DD$ and such that their $m^{th}$ terms satisfy
\begin{equation}
    \lim_m \langle \hat{s}_{\lambda^i_m}, \hat{s}_{\lambda^j_m} \rangle=1, \label{90}
\end{equation}
for every $i, j$. Also, choose an orthonormal sequence $\cup_{1\le k\le n+1}\{e^k_m\}_m$ in $\ell^2$ and define $u:\frac{1}{2}\DD\to\ell^2$ by

$$u(\lambda)=\begin{cases} 
e^k_m, & \text{ } \text{ if } \lambda=\lambda^k_m \text{ for some } m\ge 1 \text{ and } k\in\{1, \dots,  n\}\\ \\
            \sum_{i=1}^n e^i_m+\frac{1}{m}e^{n+1}_m, & \text{ } \text{ if } \lambda=\lambda^{n+1}_m \text{ for some } m\ge 1 \\ \\
             e^1_1, & \text{ } \text{ for every other point $\lambda$. }
\end{cases}
$$
(The definition of $u$ on points different from $\lambda^k_m$ is not important.) Finally, set $g: \frac{1}{2}\DD\times\frac{1}{2}\DD\to\mathbb{C}$ to be equal to $g(\lambda, \mu)=\langle u(\lambda), u(\mu)\rangle$ and define $\ell:=s\cdot g$. Note that $\ell$ does not satisfy (Q3) from Theorem \ref{89}. \par
Consider now a sequence $\{\nu_i\} \subset \frac{1}{2}\DD$
that satisfies the Carleson measure condition for $\mathcal{H}_s$. The only way this can happen is if $\{\nu_i\}$ is actually a finite sequence (this is because $||s_{\nu_i}|| $ is uniformly bounded above). But then, $\{\nu_i\}$  will (trivially) be interpolating for $\text{Mult}(\mathcal{H}_s)$ and hence also for $\text{Mult}(\mathcal{H}_s, \mathcal{H}_{\ell})$. Thus, Question \ref{4} has a positive answer for the pair $(s, \ell)$.  \par 
We now show that $\ell$ is not an AS kernel. By definition of $g,$ we obtain that 
$\text{dist}\big(\hat{g}_{\mu_1}, \text{ span}\{\hat{g}_{\mu_2}, \dots, \hat{g}_{\mu_n}\}\big) $
is uniformly bounded below, where $\{\mu_1, \dots, \mu_n \}$ is any $n$-point subset of $\cup_k \{\lambda^k_m\}$.
In view of Lemma \ref{944}, this implies that   $\cup_k \{\lambda^k_m\}$ is $n$-weakly separated by $\ell$. However, notice that 
 $$\frac{\det\big[\langle g_{\lambda^i_m}, g_{\lambda^j_m} \rangle \big]_{1\le i,j\le n+1}}{\det\big[\langle g_{\lambda^i_m}, g_{\lambda^j_m} \rangle \big]_{1\le i,j\le n}} $$
 $$=\big[\text{dist}\big(g_{\lambda^{n+1}_m}, \text{ span}\{g_{\lambda^1_m}, \dots, g_{\lambda^n_m}\}\big)\big]^2 $$
 $$=\Bigg[\text{dist}\Bigg(\bigg(\sum_{i=1}^n e^i_m+\frac{1}{m}e^{n+1}_m\bigg), \text{ span}\{e^1_m, \dots, e^n_m\}\Bigg)\Bigg]^2\to 0, $$
 as $m\to \infty.$
Thus, $\det\big[\langle \hat{g}_{\lambda^i_m}, \hat{g}_{\lambda^j_m} \rangle \big]_{1\le i,j\le n+1}\to 0$ and so, in view of (\ref{90}), we obtain  $\det\big[\langle \hat{\ell}_{\lambda^i_m}, \hat{\ell}_{\lambda^j_m} \rangle \big]_{1\le i,j\le n+1}\to 0$ as $m\to\infty.$ But then, 
$$\big[\text{dist}\big(\hat{\ell}_{\lambda^{n+1}_m}, \text{ span}\{\hat{\ell}_{\lambda^1_m}, \dots, \hat{\ell}_{\lambda^n_m}\}\big)\big]^2=\frac{\det\big[\langle \hat{\ell}_{\lambda^i_m}, \hat{\ell}_{\lambda^j_m} \rangle \big]_{1\le i,j\le n+1}}{\det\big[\langle \hat{\ell}_{\lambda^i_m}, \hat{\ell}_{\lambda^j_m} \rangle \big]_{1\le i,j\le n}}\to 0,$$
as $m\to\infty$ (note that the determinant in the denominator is uniformly bounded below by $n$-weak separation). Hence, $\cup_k \{\lambda^k_m\}$ is not $(n+1)$-weakly separated by $\ell$, which implies that $\ell$ does not have the automatic separation property.

\end{exmp}

 \section{Which kernels have the automatic separation property?} \label{301}

\subsection{Separation by multipliers} \label{302}
In this subsection, we look at kernels enjoying a separation property which is stronger than the AS property. To be precise, we will be concerned with kernels $\ell$ defined on a set $X$ such that for every $\{\lambda_i\}\subset X,$ weak separation by $\ell$ implies weak separation by $\text{Mult}(\mathcal{H}_{\ell})$. 

\begin{definition} \label{832}
Let $\ell$ be a kernel on a set $X$. We will say that $\ell$ has the \textit{multiplier separation property} if, for every $\delta>0,$ there exists an $\epsilon>0$ such that, for any two points $\lambda_i\neq \lambda_j$ in $X$ satisfying $d_s(\lambda_i, \lambda_j)>\delta$, there exists $\phi_{ij}\in\text{Mult}(\mathcal{H}_{\ell})$ of norm at most $1$ satisfying $\phi_{ij}(\lambda_i)=\epsilon$ and $\phi_{ij}(\lambda_j)=0$.

\end{definition}
\begin{remark}
For any kernel $\ell$ and any sequence $\{\lambda_i\}\subset X$, weak separation by $\text{Mult}(\mathcal{H}_{\ell})$ always implies weak separation by $\ell$. This is a consequence of the positivity of (\ref{706}) (for $k=\ell$). Hence, for kernels satisfying the multiplier separation property, weak separation by the kernel always coincides with weak separation by the multiplier algebra.
\end{remark}

\par First, we show that the multiplier separation property does indeed imply the AS property.

\begin{proposition} \label{101}
Let $\ell$ be a kernel on $X$ with the multiplier separation property. Then, $\ell$ satisfies the automatic separation property.
\end{proposition}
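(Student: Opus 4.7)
Let $\{\lambda_i\}\subset X$ be weakly separated by $\ell$; I want to show it is $n$-weakly separated by $\ell$ for every $n\ge 3$. Fix $n\ge 3$ and an arbitrary $n$-point subset $\{\mu_1,\mu_2,\dots,\mu_n\}\subset\{\lambda_i\}$. Since $d_\ell(\lambda_i,\lambda_j)\ge\delta$ for some $\delta>0$ and all $i\ne j$, the multiplier separation property furnishes a uniform $\epsilon=\epsilon(\delta)>0$ together with contractive multipliers $\phi_{1,j}\in\mathrm{Mult}(\mathcal{H}_\ell)$ ($j=2,\dots,n$) satisfying $\phi_{1,j}(\mu_1)=\epsilon$ and $\phi_{1,j}(\mu_j)=0$.

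Form the finite product
$$\Phi:=\phi_{1,2}\cdot\phi_{1,3}\cdots\phi_{1,n}\in\mathrm{Mult}(\mathcal{H}_\ell),$$
which is contractive and satisfies $\Phi(\mu_1)=\epsilon^{n-1}$ while $\Phi(\mu_j)=0$ for $j=2,\dots,n$. The key step is to translate this multiplier data into the required distance estimate. Using the identity $M_\Phi^*\ell_w=\overline{\Phi(w)}\,\ell_w$, the operator $M_\Phi^*$ annihilates $\hat\ell_{\mu_j}$ for $j\ge 2$, hence the entire subspace $V:=\operatorname{span}\{\hat\ell_{\mu_2},\dots,\hat\ell_{\mu_n}\}$, while acting on $\hat\ell_{\mu_1}$ as the scalar $\epsilon^{n-1}$. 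Therefore, for every $f\in V$,
$$\epsilon^{n-1}=\|M_\Phi^*(\hat\ell_{\mu_1}-f)\|\le \|M_\Phi^*\|\cdot\|\hat\ell_{\mu_1}-f\|\le\|\hat\ell_{\mu_1}-f\|,$$
which gives $\mathrm{dist}(\hat\ell_{\mu_1},V)\ge\epsilon^{n-1}$. As the $n$-point subset was arbitrary, $\{\lambda_i\}$ is $n$-weakly separated by $\ell$ with uniform constant $\epsilon^{n-1}$, establishing the AS property.

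\textbf{Main obstacle.} There is no real obstacle here: the proof is a clean two-step argument (build the separating multiplier by a finite product, then apply $M_\Phi^*$ to a kernel function). The only point requiring mild care is the passage from "$\Phi$ is a contractive multiplier with prescribed values" to the geometric distance inequality; this is done via the standard action of $M_\Phi^*$ on reproducing kernels, together with the observation that the norm bound on $M_\Phi^*$ forces the distance from $\hat\ell_{\mu_1}$ to any vector in $V$ to be at least $\epsilon^{n-1}$. Note that the separation constant degrades as $n$ grows (from $\epsilon$ to $\epsilon^{n-1}$), but this is permitted by Definition \ref{25}, which allows the separation constant to depend on $n$.
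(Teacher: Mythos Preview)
Your proof is correct and follows essentially the same approach as the paper: build the contractive product multiplier $\Phi=\prod_{j=2}^n\phi_{1,j}$ and use it to force $\mathrm{dist}(\hat\ell_{\mu_1},V)\ge\epsilon^{n-1}$. The only cosmetic difference is that the paper extracts the distance bound via the dual characterization $1/\mathrm{dist}=\inf\{\|f\|:f(\mu_j)=0,\ f(\mu_1)=\|\ell_{\mu_1}\|\}$ applied to $f=\epsilon^{-(n-1)}\Phi\hat\ell_{\mu_1}$, whereas you use the equivalent adjoint argument $M_\Phi^*\ell_w=\overline{\Phi(w)}\ell_w$ directly.
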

\begin{proof}
Let $n\ge 3$ and suppose $\{\lambda_i\}\subset X$ is weakly separated by $\ell.$ Thus, there exists $\delta>0$ such that $d_s(\lambda_i, \lambda_j)>\delta$ for every $i\neq j$.  By assumption, there exists $\epsilon>0$ such that for every $i\neq j$, we can find $\phi_{ij}\in\text{Mult}(\mathcal{H}_{\ell})$ of norm at most $1$ such that $\phi_{ij}(\lambda_i)=\epsilon$ and $\phi_{ij}(\lambda_j)=0$. Now, suppose $\{\mu_1, \dots, \mu_n\}$ is an arbitrary $n$-point subset of $\{\lambda_i\}$. Consider the contractive multiplier $\Phi:=\prod_{i=2}^n\phi_{\mu_1\mu_i}$, which satisfies $\Phi(\mu_2)=\dots=\Phi(\mu_n)=0$ and $\Phi(\mu_1)=\epsilon^{n-1}$. But then, we will have $||\Phi\hat{\ell}_{\mu_1}||_{\mathcal{H}_{\ell}}\le 1$ and so we can write 
$$\frac{1}{\text{dist}\big(\hat{\ell}_{\mu_1}, \text{ span}\big\{\hat{\ell}_{\mu_2}, \dots, \hat{\ell}_{\mu_n}\big\} \big) }$$ $$=\inf\Big\{||f||: f\in\big(\text{ span}\big\{\hat{\ell}_{\mu_2}, \dots, \hat{\ell}_{\mu_n}\big\}\big)^{\perp}, \hspace{0.2 cm} \langle f, \hat{\ell}_{\mu_1} \rangle =1 \Big\} $$
$$=\inf\big\{||f||: f\in\mathcal{H}_{\ell}, \hspace{0.2 cm}  f(\mu_2)=\dots=f(\mu_n)=0, \hspace{0.2 cm}  f(\mu_1)=||\ell_{\mu_1}|| \big\} $$
$$\le \frac{||\Phi\hat{\ell}_{\mu_1}||}{\epsilon^{n-1}} $$
$$\le \frac{1}{\epsilon^{n-1}},$$
hence $\{\lambda_i\}$ must be $n$-weakly separated by $\ell$. This concludes the proof. 
\end{proof}
Suppose now that we have a kernel (or a collection of kernels) with the multiplier separation property. Our next result shows that performing certain operations on these kernels allows us to construct new ones possessing the same property. 

\begin{proposition} \label{105}
Suppose $\ell: X\times X\to\mathbb{C}$ and $k: S\times S\to\mathbb{C}$ are two kernels with the multiplier separation property and let $\phi: S\to X$ be a function and $a\ge 1$. Suppose also that $\rho$ denotes any of the following kernels:
\begin{itemize}
    \item[(a)] $\ell\otimes k$;
    \item[(b)] $\ell^a$ (here, we also assume that $\ell$ is non-vanishing);
     \item[(c)] $\ell\circ \phi$ (defined by $\ell\circ \phi(\lambda, \mu)=\ell(\phi(\lambda), \phi(\mu))$);
    \item[(d)] a kernel $\rho:X\times X\to\mathbb{C}$ such that both $\rho/\ell$ and $\ell^a/\rho$ are positive semi-definite;
    \item[(e)]  a kernel $\rho: X\times X\to\mathbb{C}$ such that $\ell/\rho$ is positive semi-definite and $||\phi||_{\text{Mult}(\mathcal{H}_{\rho})}\le C \sup_{x\in X}|\phi(x)|$, for some constant $C\ge 1$, for every $\phi$ in Mult$(\mathcal{H}_{\rho})$;
    \item[(f)] a kernel $\rho: X\times X\to\mathbb{C}$ such that $\mathcal{H}_{\rho}$ and $\mathcal{H}_{\ell}$ have equivalent norms.
    
\end{itemize}
  Then, $\rho$ must also have the multiplier separation property.
\end{proposition}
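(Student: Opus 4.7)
The plan for each part follows a uniform two-step recipe: (i) compare the pseudo-metric $d_\rho$ to $d_\ell$ (and $d_k$, where relevant) in order to convert a separation hypothesis on $\rho$ into one on the factor kernel(s), and (ii) show that a separating multiplier for $\ell$ (or $k$) lifts to a multiplier of $\mathcal{H}_\rho$ with norm controlled by a universal constant, so that the separation constant $\epsilon$ transfers to $\rho$ with only a quantitative loss.

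For (a), the identity $1-d_{\ell\otimes k}^2=(1-d_\ell^2)(1-d_k^2)$ forces at least one of $d_\ell(x_1,x_2)$ or $d_k(s_1,s_2)$ to exceed $\sqrt{1-\sqrt{1-\delta^2}}$ whenever $d_\rho((x_1,s_1),(x_2,s_2))>\delta$; apply multiplier separation in the appropriate factor and lift via the contractive inclusion $\phi\mapsto \phi\otimes 1$ (respectively $1\otimes\psi$) into $\text{Mult}(\mathcal{H}_\ell\otimes\mathcal{H}_k)=\text{Mult}(\mathcal{H}_{\ell\otimes k})$. For (b), the identity $1-d_{\ell^a}^2=(1-d_\ell^2)^a$ transfers separation explicitly; the lift is contractive because $(1-\psi\overline{\psi})\ell^a=[(1-\psi\overline{\psi})\ell]\cdot\ell^{a-1}$, and in the regime where $\ell^a$ is a kernel and $a\ge 1$ the factor $\ell^{a-1}$ is positive semi-definite, so the Schur product theorem applies. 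For (c), $d_{\ell\circ\phi}(s_1,s_2)=d_\ell(\phi(s_1),\phi(s_2))$ is immediate from the definitions, and the pullback $\psi\circ\phi$ of a contractive multiplier is again contractive, since positive semi-definiteness survives restriction: $[(1-\psi\overline{\psi})\ell]\circ\phi>>0$.

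Parts (d) and (e) follow from Lemma \ref{944} applied in the appropriate direction, yielding $d_\ell\le d_\rho\le d_{\ell^a}$ in (d) and $d_\rho\le d_\ell$ in (e). In the first case the identity from (b) recovers a lower bound on $d_\ell$ from $d_\rho>\delta$, while in the second $d_\rho>\delta$ directly forces $d_\ell>\delta$. The multiplier lift uses, for (d), a Schur product argument from $\rho/\ell>>0$ (giving a contractive inclusion $\text{Mult}(\mathcal{H}_\ell)\hookrightarrow\text{Mult}(\mathcal{H}_\rho)$), and, for (e), the elementary fact that a contractive multiplier of $\mathcal{H}_\ell$ is automatically bounded by $1$ pointwise on $X$, hence by hypothesis a multiplier of $\mathcal{H}_\rho$ of norm at most $C$. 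Part (f) is probably the most delicate: equivalent norms of RKHSs immediately give $\text{Mult}(\mathcal{H}_\ell)=\text{Mult}(\mathcal{H}_\rho)$ with equivalent norms, but one also needs a quantitative comparison of $d_\ell$ and $d_\rho$. My plan is to use the extremal formula $d_\ell(z,w)=\|\ell_z\|_\ell^{-1}\sup\{|\phi(z)|:\phi\in\mathcal{H}_\ell,\,\|\phi\|_\ell\le 1,\,\phi(w)=0\}$ together with the analogous expression for $\|\ell_z\|_\ell$; both quantities transform by multiplicative constants when passing from $\mathcal{H}_\ell$ to $\mathcal{H}_\rho$, yielding a two-sided bound of the form $(c_1/c_2)\,d_\rho\le d_\ell\le (c_2/c_1)\,d_\rho$. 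The main obstacle across the whole proposition is bookkeeping: keeping track of the separation constants through the metric conversions and verifying the precise direction of Lemma \ref{944} in each case.
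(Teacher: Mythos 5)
Your proposal is correct and follows essentially the same route as the paper: in each part you convert a $d_\rho$-separation hypothesis into a $d_\ell$ (or $d_k$) one via explicit kernel identities or Lemma \ref{944}, invoke multiplier separation in the factor, and transfer the separating multiplier to $\mathcal{H}_\rho$ contractively via Schur/tensor products (or Lemma \ref{104}), exactly as the paper does. The only mild divergence is (c), where you derive $d_{\ell\circ\phi}(s_1,s_2)=d_\ell(\phi(s_1),\phi(s_2))$ and the contractivity of $\psi\mapsto\psi\circ\phi$ directly from the kernel formulas (pullbacks of positive semidefinite matrices stay positive semidefinite), a slightly more elementary path than the paper's appeal to the composition isometry of \cite[Theorem 5.7]{PauRa}; both arguments are valid and rest on the same underlying fact.
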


Before we go into the proof, we require the following simple lemma.
\begin{lemma} \label{104}
Suppose $g, \ell$ are two reproducing kernels on $X$ such that $\ell/g$ is positive semi-definite. Then, $\text{Mult}(\mathcal{H}_{g})\subset \text{Mult}(\mathcal{H}_{\ell})$ and $  ||\phi||_{\text{Mult}(\mathcal{H}_{\ell})}\le  ||\phi||_{\text{Mult}(\mathcal{H}_{g})}$, for every $\phi\in\text{Mult}(\mathcal{H}_{g})$. 

\end{lemma}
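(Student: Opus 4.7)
The plan is to use the characterization (\ref{706}) of multipliers via positivity together with the Schur product theorem. Suppose $\phi\in\text{Mult}(\mathcal{H}_g)$ with $\|\phi\|_{\text{Mult}(\mathcal{H}_g)}\le M$. Applying (\ref{706}) in the case $k=\ell=g$, this is equivalent to the kernel
$$(z,w)\mapsto \bigl(M^{2}-\phi(z)\overline{\phi(w)}\bigr)g(z,w)$$
being positive semi-definite on $X\times X$.

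Next, since by hypothesis $\ell/g$ is positive semi-definite, the Schur product theorem yields that the pointwise product
$$\bigl(M^{2}-\phi(z)\overline{\phi(w)}\bigr)g(z,w)\cdot \frac{\ell(z,w)}{g(z,w)}=\bigl(M^{2}-\phi(z)\overline{\phi(w)}\bigr)\ell(z,w)$$
is also positive semi-definite. Invoking (\ref{706}) once more, this time in the case $k=\ell$, we conclude that $\phi\in\text{Mult}(\mathcal{H}_\ell)$ with $\|\phi\|_{\text{Mult}(\mathcal{H}_\ell)}\le M$. Taking the infimum over admissible $M$ gives both the containment $\text{Mult}(\mathcal{H}_g)\subset\text{Mult}(\mathcal{H}_\ell)$ and the norm inequality $\|\phi\|_{\text{Mult}(\mathcal{H}_\ell)}\le \|\phi\|_{\text{Mult}(\mathcal{H}_g)}$.

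There is essentially no obstacle here; the only minor subtlety is ensuring the algebraic manipulation $\bigl(M^{2}-\phi(z)\overline{\phi(w)}\bigr)g(z,w)\cdot\ell(z,w)/g(z,w) = \bigl(M^{2}-\phi(z)\overline{\phi(w)}\bigr)\ell(z,w)$ is literally a Schur product of two positive semi-definite kernels, which it is, since both factors (the multiplier positivity kernel and $\ell/g$) are by hypothesis positive semi-definite.
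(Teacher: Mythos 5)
Your proof is correct and takes essentially the same route as the paper: both invoke the positivity characterization of bounded multipliers (the paper's (\ref{706})) and then apply the Schur product theorem with the positive semi-definite kernel $\ell/g$ to pass from $g$ to $\ell$.
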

\begin{proof}
We know that $||\phi||_{\text{Mult}(\mathcal{H}_{g})}\le M$ if and only if the matrix $\big[\big(M^2-\phi(\lambda_i)\overline{\phi(\lambda_j)}\big)g(\lambda_i, \lambda_j)  \big]$
is positive semi-definite for any choice of points $\lambda_i$ in $X$. Taking the Schur product with $\big[(\ell/g)(\lambda_i, \lambda_j)\big]$ then yields the desired result.
\end{proof}

\begin{proof}[Proof of Proposition \ref{105}]
We only prove parts (a) and (c)-(f) (the proof of (b) is similar to that of (a)). \\
For (a), let $\delta>0$ and suppose $(\lambda_i, \mu_i), (\lambda_j, \mu_j)$ are two points in $X\times S$ satisfying $d_{\ell\otimes k}\big((\lambda_i, \mu_i), (\lambda_j, \mu_j)\big)>\delta$. Thus, we must have 
$$\frac{| (\ell\otimes k)\big((\lambda_i, \mu_i), (\lambda_j, \mu_j)\big)|^2}{(\ell\otimes k)\big((\lambda_i, \mu_i), (\lambda_i, \mu_i)\big)(\ell\otimes k)\big((\lambda_j, \mu_j), (\lambda_j, \mu_j)\big)}$$ $$=\frac{|\ell(\lambda_i, \lambda_j)k(\mu_i, \mu_j) |^2}{\ell(\lambda_i, \lambda_i)\ell(\lambda_j, \lambda_j)k(\mu_i, \mu_i)k(\mu_j, \mu_j)}$$ $$\le 1-\delta^2,$$
which implies that either $|\langle \hat{\ell}_{\lambda_i}, \hat{\ell}_{\lambda_j} \rangle|^2\le \sqrt{1-\delta^2}$ or $|\langle \hat{k}_{\mu_i}, \hat{k}_{\mu_j} \rangle|^2\le \sqrt{1-\delta^2}$. Without loss of generality, assume that $|\langle \hat{\ell}_{\lambda_i}, \hat{\ell}_{\lambda_j} \rangle|^2\le \sqrt{1-\delta^2}$, hence $$d_{\ell}(\lambda_i, \lambda_j)\ge \sqrt{1-\sqrt{1-\delta^2}}.$$ The fact that $\ell$ has the multiplier separation property then implies the existence of $\epsilon>0$ (depending only on $\delta$) and $\phi_{ij}\in\text{Mult}(\mathcal{H}_{\ell})$ of norm at most $1$ such that $\phi_{ij}(\lambda_i)=\epsilon$ and $\phi_{ij}(\lambda_j)=0.$ In view of (\ref{706}), we obtain
\begin{equation}\label{709}
  (1-\phi_{ij}(x)\overline{\phi_{ij}(y)})\ell(x, y)\ge 0,
\end{equation} 
for every choice of points $x, y\in X.$ We now extend $\phi$ to $X\times S$ by putting $\phi(x, s)=\phi(x)$, for every $(x, s)\in X\times S.$ Condition (\ref{709}) then becomes 
$(1-\phi_{ij}(x, s)\overline{\phi_{ij}(y, t)})\ell(x, y)\ge 0$, which, after taking the Schur product with $k(s, t)$, gives us 
$$(1-\phi_{ij}(x, s)\overline{\phi_{ij}(y, t)})\ell(x, y)k(s, t)$$ $$=(1-\phi_{ij}(x, s)\overline{\phi_{ij}(y, t)})(\ell\otimes k)\big((x, s), (y, t)\big)\ge 0,$$ for every choice of points $(x, s), (y, t)\in X\times S.$ Hence, $\phi_{ij}$ is a contractive multiplier of $\mathcal{H}_{\ell\otimes k},$ which concludes the proof of (a).
\par
For (c), let $\delta>0$ and assume that $s_i, s_j$ are two points in $S$ satisfying $d_{\ell\circ\phi}(s_i, s_j)>\delta$. It is known (see \cite[Theorem 5.7]{PauRa}) that there exists an isometry $\Gamma: \mathcal{H}_{\ell\circ\phi}\to\mathcal{H}_{\ell}$ satisfying $\Gamma((\ell\circ\phi)_s)=\ell_{\phi(s)},$ for every $s\in S.$ This implies that $d_{\ell\circ\phi}(s, t)=d_{\ell}(\phi(s), \phi(t)),$ for every $s, t\in S.$ Thus, we obtain $d_{\ell}(\phi(s_i), \phi(s_j))>\delta.$ But $\ell$ has the multiplier separation property, so we deduce the existence of $\epsilon>0$ (depending only on $\delta$) and $\psi_{ij}\in\text{Mult}(\mathcal{H}_{\ell})$ of norm at most $1$ such that $\psi_{ij}(\phi(s_i))=\epsilon$ and $\psi_{ij}(\phi(s_j))=0.$ Since (again by \cite[Theorem 5.7]{PauRa}) $||f||_{\mathcal{H}_{\ell\circ\phi}}=\inf\{||F||_{\mathcal{H}_{\ell}}: f=F\circ\phi \}$, for every $f\in \mathcal{H}_{\ell\circ\phi},$ we obtain $||\psi_{ij}\circ \phi||_{\text{Mult}(\mathcal{H}_{\ell\circ\phi})}\le ||\psi_{ij}||_{\text{Mult}(\mathcal{H}_{\ell})}\le 1$ and so $\psi_{ij}\circ\phi$ is a separating multiplier with the desired properties.
\par

For (d), suppose that $\rho$ is a kernel on $X$ satisfying the given hypotheses. Let $\delta>0$ and assume that $\lambda_i, \lambda_j$ are two points in $X$ with $d_{\rho}(\lambda_i, \lambda_j)>\delta$. By Sylvester's criterion, we obtain the positivity of the $2\times 2$ matrix 
$$
\begin{bmatrix}
(1-\delta^2)\rho(\lambda_i, \lambda_i) & \rho(\lambda_i, \lambda_j) \\
\rho(\lambda_j, \lambda_i) & \rho(\lambda_j, \lambda_j). 
\end{bmatrix}
$$
Taking the Schur product with the positive $2\times 2$ matrix $\big[(\ell^a/\rho)(\lambda_i, \lambda_j)\big]$ then yields the positivity of 
 $$ \begin{bmatrix}
(1-\delta^2)\ell^a(\lambda_i, \lambda_i) & \ell^a(\lambda_i, \lambda_j) \\
\ell^a(\lambda_j, \lambda_i) & \ell^a(\lambda_j, \lambda_j)
\end{bmatrix}, $$
which implies that $d_{\ell^a}(\lambda_i, \lambda_j)>\delta$. Thus, we obtain $d_{\ell}(\lambda_i, \lambda_j)>\sqrt{1-\sqrt[a]{1-\delta^2}}$. But $\ell$ has the multiplier separation property, so we deduce the existence of $\epsilon>0$ (depending only on $\delta$ and $a$) and $\phi_{ij}\in\text{Mult}(\mathcal{H}_{\ell})$ of norm at most $1$ such that $\phi_{ij}(\lambda_i)=\epsilon$ and $\phi_{ij}(\lambda_j)=0.$  By Lemma \ref{104}, $\phi_{ij}$ must also be a contractive multiplier of $\mathcal{H}_{\rho},$ which concludes the proof.

For (e), again suppose that $\rho$ is a kernel on $X$ satisfying the given hypotheses and let $\lambda_i, \lambda_j$ be two points in $X$ satisfying $d_{\rho}(\lambda_i, \lambda_j)>\delta$. Reasoning as in the proof of (d), we obtain the existence of $\epsilon>0$ (depending only on $\delta$) and $\phi_{ij}\in\text{Mult}(\mathcal{H}_{\ell})$ of norm at most $1$ such that $\phi_{ij}(\lambda_i)=\epsilon$ and $\phi_{ij}(\lambda_j)=0.$ But  $\mathcal{H}_{\ell}$ is a Hilbert function space, so, in view of our assumptions, we can write
$$||\phi_{ij}||_{\text{Mult}(\mathcal{H}_{\rho})}\le C\sup_{x\in X}|\phi_{ij}(x)|\le C||\phi_{ij}||_{\text{Mult}(\mathcal{H}_{\ell})}\le C,$$
which implies that $\phi_{ij}$ is a separating multiplier with the desired properties.

\par Finally, suppose that $\mathcal{H}_{\ell}$ and $\mathcal{H}_{\rho}$ have equivalent norms. This implies that the evaluation functionals $T_x:\mathcal{H}_{\ell}\to\mathbb{C}, \text{ } T_x(f)=f(x)$ and $S_x:\mathcal{H}_{\rho}\to\mathbb{C}, \text{ } S_x(f)=f(x)$ also have equivalent norms (with constants independent of $x\in  X$). Hence, there exist $C_1, C_2>0$ such that \begin{equation}\label{812} C_1||\ell_x||_{\mathcal{H}_{\ell}}\le ||\rho_x||_{\mathcal{H}_{\rho}}\le C_2  ||\ell_x||_{\mathcal{H}_{\ell}},\end{equation}  for every $x\in X.$ But since we know that, for any kernel $k$ and points $x,y \in X$, 
$$ \frac{1}{d_{k}(x, y)}=\frac{1}{\text{dist}\big(\hat{k}_{x}, \text{ span}\big\{\hat{k}_{y}\big\} \big) }$$ $$=\inf\Big\{||f||_{\mathcal{H}_k}: f\in\big(\text{ span}\big\{\hat{k}_{y}\big\}\big)^{\perp}, \hspace{0.2 cm} \langle f, \hat{k}_{x} \rangle =1 \Big\} $$
\begin{equation} \label{1303}=\inf\big\{||f||_{\mathcal{H}_k}: f\in\mathcal{H}_{k}, \hspace{0.2 cm}  f(y)=0, \hspace{0.2 cm}  f(x)=||k_{x}||_{\mathcal{H}_k} \big\}, \end{equation}
(\ref{812}) and the equivalence of $||\cdot||_{\mathcal{H}_{\ell}}$ and $||\cdot||_{\mathcal{H}_{\rho}}$ allow us to deduce the existence of constants $C'_1, C'_2>0$ such that $$C'_1d_{\ell}(x, y)\le d_{\rho}(x, y)\le C'_2d_{\ell}(x, y),$$
for all $x, y\in X$. This double inequality, combined with the fact that the associated multiplier norms $||\cdot||_{\text{Mult}(\mathcal{H}_{\ell})}$ and $||\cdot||_{\text{Mult}(\mathcal{H}_{\rho})}$ must also be equivalent, finishes off the proof. 
\end{proof}

\begin{remark}
Let $\ell, k$ be two kernels on $X$ with the multiplier separation property. In view of Proposition \ref{105}, the product $\ell\cdot k$ (which is the restriction of $\ell\otimes k$ along the diagonal) must also have the multiplier separation property.
\end{remark}
\begin{remark}
Assume that $\ell, \rho, k$ are kernels on $X$ such that $\ell$ and $k$ have the multiplier separation property and $\rho/\ell$ and $k/\rho$ are both positive semi-definite. Then, $\rho$ needn't even be an AS kernel. Indeed, define 
$$\rho(\lambda, \mu)=\frac{1}{1-\lambda^2\overline{\mu}^2}+\frac{1}{1-\lambda^3\overline{\mu}^3}, \hspace{0.3 cm}\ell(\lambda, \mu)=\frac{1}{1-\lambda^6\overline{\mu}^6},$$
and $$k(\lambda, \mu)=\frac{1}{(1-\lambda^2\overline{\mu}^2)(1-\lambda^3\overline{\mu}^3)},$$
for $\lambda, \mu\in \DD.$ Now, since $\frac{1}{1-\lambda^2\overline{\mu}^2}, \frac{1}{1-\lambda^3\overline{\mu}^3}$ are both factors of $\frac{1}{1-\lambda\overline{\mu}}$, \cite[Proposition 5.6]{PauRa} implies that $\ell$ is a factor of both $\frac{1}{1-\lambda^2\overline{\mu}^2}$ and $\frac{1}{1-\lambda^3\overline{\mu}^3}$. Thus, $\rho/\ell$ is positive semi-definite. Also, since $\ell$ is a CP kernel, it evidently possesses the multiplier separation property. On the other hand, note that $$\frac{k(\lambda, \mu)}{\rho(\lambda, \mu)}=\frac{1/2}{1-\langle \frac{1}{\sqrt{2}} (\lambda^2, \lambda^3), \frac{1}{\sqrt{2}} (\mu^2, \mu^3) \rangle},\hspace{0.3 cm} \lambda, \mu\in\DD,$$
and so $k/\rho$ is positive semi-definite (actually a CP kernel). Being the product of two CP kernels, $k$ must satisfy the multiplier separation property. However, note that (letting $\omega=e^{2\pi i/3}$) $$\rho_{z}-\rho_{\omega z}+\rho_{-\omega z}-\rho_{-z}=0,$$
for all $z\in\DD$. Also, any two-vector subset of $\{\rho_{z}, \rho_{\omega z}, \rho_{-\omega z}, \rho_{-z} \}$ is linearly independent if $z\neq 0.$ This implies that $\rho$ does not have the AS property.
\end{remark}

We now present examples of kernels satisfying the multiplier separation property. 
\begin{exmp}[\textit{Products of powers of $2$-point Pick kernels}] \label{821} \hfill \par
Let $k=k_1^{t_1}\otimes k_2^{t_2}\otimes\cdots\otimes k_n^{t_n}$, where $n\ge 1,$ $t_i\ge 1$ and each $k_i$ is an irreducible kernel on $X_i$ with the $2$-point scalar Pick property (note that, in view of \cite[Lemma 7.2]{Pick}, every such kernel must be nonzero on $X_i\times X_i$). If $x, y\in X_i$ satisfy $d_{k_i}(x, y)=\delta,$ then the $2$-point Pick property implies the existence of a contractive multiplier $\phi_{xy}\in\text{Mult}(\mathcal{H}_{k_i})$ such that $\phi_{xy}(x)=\delta$ and $\phi_{xy}(y)=0.$ Thus, each $k_i$ has the multiplier separation property and we also deduce, in view of Proposition \ref{105}, that the same must be true for $k$. Examples of such kernels (which are actually products of powers of complete Pick kernels) include those of the form
$$k((\lambda_1, \dots, \lambda_m), (\mu_1, \dots, \mu_m))=\prod_{i=1}^m\frac{1}{(1-\langle b_i(\lambda_i), b_i(\mu_i)\rangle)^{t_i}},$$
where $t_i\ge 1$, $b_i: X_i\to\mathbb{B}_d$ and  $(\lambda_1, \dots, \lambda_m), (\mu_1, \dots, \mu_m)$ lie in the polydomain $X_1\times X_2\times \cdots \times X_m$. 
\par If each $k_i$ is defined on the same set $X,$  Proposition \ref{105}(c) and the previous result imply that $\tilde{k}:=k_1^{t_1}k_2^{t_2}\cdots k_n^{t_n}$ must also have the automatic separation property whenever every $k_i$ is an irreducible $2$-point Pick kernel.
\end{exmp}

\begin{exmp}[\textit{Hardy spaces on planar domains}] \label{822}  \hfill \par
Suppose that $\Omega\subset\mathbb{C}$ is a domain with boundary $\partial\Omega$ consisting of a finite collection of smooth curves. Let $d\sigma$ be arclength measure on $\partial\Omega$ and define $H^2(\Omega)$ to be the closure in $L^2(\partial\Omega, d\sigma)$ of the subspace consisting of restrictions to $\partial\Omega$ of functions holomorphic on $\overline{\Omega}$ (see \cite{Abrahamse} and \cite{Duren} for the basic theory of these spaces). While the choice of the measure $d\sigma$ is not canonical, all the standard choices lead to the same space of holomorphic functions
on $\Omega$ with equivalent norms.  A fascinating result due to Arcozzi, Rochberg and Sawyer (see \cite[Corollary 13]{Rochberg}) then tells us that $H^2(\Omega)$ admits an equivalent norm with the property that with the new norm the space is a reproducing kernel Hilbert space with a complete Pick kernel. In view of Proposition \ref{105}(f), we obtain that the kernel of $H^2(\Omega)$ has the multiplier separation property.

\end{exmp}

 Examples \ref{821}-\ref{822} serve as manifestations of a general observation: the multiplier separation property will always be present in kernels obtained by performing any of the operations from Proposition \ref{105} to one or more $2$-point Pick kernels. We state this as a corollary.
\begin{corollary} \label{843}
Let $\ell$ be a kernel on $X$ defined by performing a finite number of any of the operations from Proposition \ref{105} to one or more kernels having the $2$-point Pick property. Then, $\ell$ has the multiplier separation property. 
\end{corollary}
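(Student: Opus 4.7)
The plan is a straightforward induction on the number $N$ of operations from Proposition \ref{105} used in the construction of $\ell$. The base case is $N=0$, where $\ell$ itself is a kernel with the $2$-point (scalar) Pick property; as already observed in the opening of Example \ref{821}, this case is immediate. Indeed, given $\delta>0$ and two points $x\neq y$ with $d_{\ell}(x,y)\ge \delta$, the $2$-point Pick property applied to the prescribed values $w_1=d_{\ell}(x,y)$ at $x$ and $w_2=0$ at $y$ (together with the standard $2\times 2$ positivity check) delivers a contractive multiplier $\phi_{xy}\in\text{Mult}(\mathcal{H}_{\ell})$ with $\phi_{xy}(x)=d_{\ell}(x,y)\ge \delta$ and $\phi_{xy}(y)=0$, which is exactly the content of Definition \ref{832} with $\epsilon=\delta$.

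For the inductive step, suppose the conclusion holds for all kernels built with at most $N$ operations, and let $\ell$ be built with $N+1$ operations. Then $\ell$ is obtained by applying one of the operations (a)--(f) of Proposition \ref{105} to one or two kernels, each of which is built with at most $N$ operations (operations (a) and (c) take two inputs; the rest take one). By the induction hypothesis, every input kernel has the multiplier separation property, and then Proposition \ref{105} (applied to whichever clause matches the final operation) shows that $\ell$ itself has the multiplier separation property. This completes the induction.

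There is essentially no obstacle here: all of the work is concentrated in Example \ref{821} (which handles the base case) and Proposition \ref{105} (which handles each inductive step). The only point that requires a moment's care is the bookkeeping that an operation from Proposition \ref{105} may involve two input kernels (as in (a)), an auxiliary map $\phi$ (as in (c)), or an auxiliary parameter $a$ or constant $C$ (as in (b), (d), (e)); however, in each case the operation produces a single output kernel to which the induction hypothesis can be applied on each input separately. Thus the corollary follows formally from the two previously established results.
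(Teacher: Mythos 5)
Your proof is correct and takes the same route the paper intends: the paper gives no explicit argument for Corollary~\ref{843}, treating it as an immediate consequence of the observation in Example~\ref{821} (that a $2$-point Pick kernel has the multiplier separation property) together with the closure properties in Proposition~\ref{105}; your write-up simply makes the implicit induction on the number of operations explicit. One tiny bookkeeping slip: operation (c), $\ell\circ\phi$, has only one kernel input (the auxiliary $\phi:S\to X$ is an arbitrary function, not a kernel), so only (a) genuinely has two kernel inputs, but this has no bearing on the argument.
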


\subsection{The general case} \label{303}
First, we prove a result (which mirrors Proposition \ref{105}) showing that certain operations on kernels preserve the AS property.
\begin{proposition} \label{942}
Suppose $\ell: X\times X\to\mathbb{C}$ and $k: S\times S\to\mathbb{C}$ are two AS kernels and let $\phi: S\to X$ be a function and $a\ge 1$. Suppose also that $\rho$ denotes any of the following kernels:
\begin{itemize}
    \item[(a)] $\ell\otimes k$;
    \item[(b)] $\ell^a$ (here, we also assume that $\ell$ is non-vanishing);
     \item[(c)] $\ell\circ \phi$ (defined by $\ell\circ \phi(\lambda, \mu)=\ell(\phi(\lambda), \phi(\mu))$);
    \item[(d)] a kernel $\rho:X\times X\to\mathbb{C}$ such that both $\rho/\ell$ and $\ell^a/\rho$ are positive semi-definite;
    \item[(e)] a kernel $\rho: X\times X\to\mathbb{C}$ such that $\mathcal{H}_{\rho}$ and $\mathcal{H}_{\ell}$ have equivalent norms.
    
\end{itemize}
  Then, $\rho$ must also have the AS property.
\end{proposition}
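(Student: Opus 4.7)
The plan is to prove each of the five cases by adapting the corresponding part of Proposition \ref{105}, replacing every multiplier-separation construction with an argument phrased directly in terms of the $n$-point distance $\text{dist}(\hat{k}_{\mu_1},\text{span}\{\hat{k}_{\mu_j}\}_{j\ge 2})$, with Lemma \ref{944} as the main comparison tool. Cases (c) and (e) are the most routine. For (c), the isometry $\Gamma:\mathcal{H}_{\ell\circ\phi}\to\mathcal{H}_\ell$ with $\Gamma((\ell\circ\phi)_s)=\ell_{\phi(s)}$ preserves inner products, hence preserves every Gram matrix of normalized kernel functions. Weak separation of $\{s_i\}$ by $\ell\circ\phi$ forces $\phi$ to be injective on the sequence, $\{\phi(s_i)\}$ is then weakly separated by $\ell$, AS of $\ell$ upgrades this to $n$-weak separation for every $n\ge 3$, and the isometry carries the conclusion back. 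For (e), equivalence of norms yields $\|\ell_x\|_\ell\asymp\|\rho_x\|_\rho$ uniformly in $x$, and combining this with the $n$-point analog of (\ref{1303}),
\[
\frac{1}{\text{dist}(\hat{k}_{\mu_1},\text{span}\{\hat{k}_{\mu_j}\}_{j\ge 2})}=\|k_{\mu_1}\|_k\cdot\inf\bigl\{\|g\|_k:g(\mu_1)=1,\ g(\mu_j)=0\text{ for }j\ge 2\bigr\},
\]
shows that the $n$-point distances for $\ell$ and $\rho$ are uniformly comparable. Hence weak and $n$-weak separation coincide for the two kernels, and AS transfers.

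For (d), apply Lemma \ref{944} twice: $\rho/\ell\ge 0$ gives $\text{dist}_\ell\le\text{dist}_\rho$, while $\ell^a/\rho\ge 0$ gives $\text{dist}_\rho\le\text{dist}_{\ell^a}$. The second inequality, specialized to $n=2$, says that weak separation by $\rho$ forces weak separation by $\ell^a$, and the pointwise identity $1-d_{\ell^a}(x,y)^2=(1-d_\ell(x,y)^2)^a$ (valid for $a\ge 1$) then yields weak separation by $\ell$ with a smaller but still positive constant. Applying AS of $\ell$ produces $n$-weak separation by $\ell$, and the first inequality $\text{dist}_\ell\le\text{dist}_\rho$ pushes this up to $\rho$. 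Case (b) is the special case $\rho=\ell^a$: the same chain works, using Lemma \ref{944} with $\ell^a/\ell=\ell^{a-1}$ (which is positive semi-definite in the cases of interest, for instance when $a$ is an integer by the Schur product theorem).

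The main obstacle is (a), where the separation of the pair $(i,j)$ detected by $\ell\otimes k$ may be carried by either the $\ell$- or the $k$-factor, so weak separation by $\rho=\ell\otimes k$ does not directly supply weak separation by either factor. My plan here is to exploit the tensor decomposition $\hat{\rho}_{(\lambda,s)}=\hat{\ell}_\lambda\otimes\hat{k}_s$, under which the Gram matrix of any $n$-tuple factors as the Schur product $G_\rho=G_\ell\circ G_k$. Oppenheim's inequality, together with the fact that the diagonals of $G_\ell$ and $G_k$ are all $1$, gives $\det G_\rho^{(n)}\ge\max\{\det G_\ell^{(n)},\det G_k^{(n)}\}$, while $\det G_\rho^{(n-1)}\le 1$ by Hadamard's inequality, so a uniform lower bound on one of the two factor determinants will yield the desired $n$-weak separation bound. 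I would extract this by two-coloring the pairs $(i,j)$ according to whether $|\langle\hat{\ell}_{\lambda_i},\hat{\ell}_{\lambda_j}\rangle|\le(1-\delta^2)^{1/4}$ ($L$-edges) or $|\langle\hat{k}_{s_i},\hat{k}_{s_j}\rangle|\le(1-\delta^2)^{1/4}$ ($K$-edges), noting that the factorization $|\langle\hat\rho_i,\hat\rho_j\rangle|=|\langle\hat{\ell}_{\lambda_i},\hat{\ell}_{\lambda_j}\rangle|\cdot|\langle\hat{k}_{s_i},\hat{k}_{s_j}\rangle|$ forces at least one color for every pair. The delicate step, and what I expect to be the genuine hard part, is combining AS of the two factors across the mixed-color regime so that the resulting constant is uniform over all $n$-point subsets of the sequence.
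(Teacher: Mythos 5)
Your work on (c), (d), and (e) is correct and matches the paper's own proofs almost verbatim: the isometry $\Gamma$ and the identity $\langle(\ell\circ\phi)_s,(\ell\circ\phi)_t\rangle=\langle\ell_{\phi(s)},\ell_{\phi(t)}\rangle$ for (c), the double application of Lemma \ref{944} for (d), and the $n$-point version of identity (\ref{1303}) for (e) are exactly the steps the author uses. For (b), your chain through $d_\ell$ and back up via $\ell^a/\ell=\ell^{a-1}$ reproduces the author's intended adaptation, and you are right to flag that positivity of $\ell^{a-1}$ is needed and is only automatic for integer $a$ or for infinitely divisible $\ell$; the paper is equally loose here.

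The genuine gap is in (a), and you yourself point to it. Oppenheim's inequality gives $\det G_\rho^{(n)}\ge\max\{\det G_\ell^{(n)},\det G_k^{(n)}\}$, but that lower bound is vacuous precisely in the generic ``grid'' situation: take $\ell=k$ the Szeg\H{o} kernel and the three points $(a_1,b_1),(a_1,b_2),(a_2,b_1)$ in $\DD^2$. Both projected $3$-tuples $\{a_1,a_1,a_2\}$ and $\{b_1,b_2,b_1\}$ contain a repeat, so $\det G_\ell^{(3)}=\det G_k^{(3)}=0$ and your max-bound collapses to $0$, even though a direct computation gives $\det G_\rho^{(3)}=(1-|\langle\hat r_{a_1},\hat r_{a_2}\rangle|^2)(1-|\langle\hat r_{b_1},\hat r_{b_2}\rangle|^2)$, which is uniformly bounded below. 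So the determinant inequality you invoke cannot, by itself, certify $n$-weak separation for $\ell\otimes k$; the information stored in the cross terms (pairs where the $\ell$-component collapses but the $k$-component separates, and vice versa) is thrown away. The two-coloring idea is the right starting point, but you still need a clustering/chaining step that converts the per-pair dichotomy into a partition of the $n$-tuple into an $\ell$-separated transversal together with $k$-separated fibers, followed by a tensor-compression argument (e.g.\ applying $I\otimes P$ for a suitable projection $P$) to reduce to the AS hypotheses on the individual factors. You explicitly say you expect this to be the hard part and do not carry it out; that is indeed where the proof is missing. (The paper, for its part, also omits the proofs of (a) and (b), asserting only that they are ``very similar'' to (c)--(e) with Lemma \ref{944} replacing Lemma \ref{104}; note that Lemma \ref{944} applied to the factorizations $\rho/(\ell\otimes 1)$ and $\rho/(1\otimes k)$ yields exactly your max-of-two bound and nothing more, so the adaptation is less automatic than that remark suggests.)
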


\begin{proof}
The ideas here are very similar to those used in the proof of Proposition \ref{105} (one difference being that we have to use Lemma \ref{944} in place of Lemma \ref{104}), so we only prove (c)-(e). \\
For (c), let $\{s_i\}\subset S$ be weakly separated by $\ell\circ\phi$, hence we can find $\epsilon>0$ such that any two points $s_i\neq s_j$ satisfy $d_{\ell\circ\phi}(s_i, s_j)>\epsilon$. Recall (as in the proof of Proposition \ref{105}(c)) that 
\begin{equation} \label{920}
    \langle (\ell\circ\phi)_{s}, (\ell\circ\phi)_{t} \rangle_{\mathcal{H}_{\ell\circ\phi}}= \langle \ell_{\phi(s)}, \ell_{\phi(t)} \rangle_{\mathcal{H}_{\ell}},
\end{equation}
for every $s, t\in S$. This implies that $d_{\ell}(\phi(s_i), \phi(s_j))=d_{\ell\circ\phi}(s_i, s_j)>\epsilon,$ for every $i\neq j$, hence the sequence $\{\phi(s_i)\}\subset X$ is weakly separated by $\ell$. But $\ell$ has the AS property, so $\{\phi(s_i)\}$ must also be $n$-weakly separated by $\ell,$ for every $n\ge 3.$ Thus,   we can find positive constants $\epsilon_n>0$ such that for every $n$-point subset $\{\phi(\mu_1), \dots, \phi(\mu_n)\}$ of $\{\phi(s_i)\}$ and for $w_1=\epsilon_n$, $w_2=\dots=w_n=0$, the matrix
$$[(1-w_j\overline{w}_i)\langle \hat{\ell}_{\phi(\mu_i)},  \hat{\ell}_{\phi(\mu_j)} \rangle_{\mathcal{H}_{\ell}}]_{1\le i,j\le n}$$
is positive semi-definite. In view of (\ref{920}), the same must be true for the matrix 
$$[(1-w_j\overline{w}_i)\langle \widehat{(\ell\circ\phi)}_{\mu_i},  \widehat{(\ell\circ\phi)}_{\mu_j} \rangle_{\mathcal{H}_{\ell\circ\phi}}]_{1\le i,j\le n}.$$
Lemma \ref{708} then implies that $\{s_i\}$ is $n$-weakly separated by $\ell\circ\phi,$ for every $n\ge 3,$ so the proof of (c) is complete. \par 
For (d), suppose that $\rho$ is a kernel on $X$ satisfying the given hypotheses. Let $\{\lambda_i\}\subset X$ be a sequence satisfying $d_{\rho}(\lambda_i, \lambda_j)>\epsilon>0$, for every $i\neq j$. As in the proof of Proposition \ref{105}(d), we obtain $d_{\ell}(\lambda_i, \lambda_j)>\sqrt{1-\sqrt[a]{1-\epsilon^2}}$, for every $i\neq j$. This implies that $\{\lambda_i\}$ is weakly, and hence $n$-weakly, separated by $\ell$, for every $n\ge 3$. Lemma \ref{944} then allows us to deduce that $\{\lambda_i\}$ must also be $n$-weakly separated by $\rho,$ for every $n\ge 3,$ which concludes the proof. \par

Finally, suppose that $\mathcal{H}_{\ell}$ and $\mathcal{H}_{\rho}$ have equivalent norms. As in the proof of Proposition \ref{105} (f), there exist constants $C_1, C_2>0$ such that (\ref{812}) is satisfied. But then, we also know that for any kernel $k: X\times X\to\mathbb{C}$ and any $n$-point set $\{\mu_1, \dots, \mu_n\}\subset X$ we can write
$$\frac{1}{\text{dist}\big(\hat{k}_{\mu_1}, \text{ span}\big\{\hat{k}_{\mu_2}, \dots, \hat{k}_{\mu_n}\big\} \big) }$$ 
$$=\inf\big\{||f||: f\in\mathcal{H}_{k}, \hspace{0.2 cm}  f(\mu_2)=\dots=f(\mu_n)=0, \hspace{0.2 cm}  f(\mu_1)=||k_{\mu_1}|| \big\}. $$
This equality, combined with (\ref{812}) and the equivalence of norms for $\mathcal{H}_{\ell}, \mathcal{H}_{\rho}$, implies that a sequence in $X$ is $n$-weakly separated by $\ell $ if and only if it is $n$-weakly separated by $\rho,$ for any $n\ge 2,$ and so, since $\ell$ is an AS kernel, we are done.
\end{proof}

\begin{remark}
Let $\ell, k$ be two AS kernels on $X$. In view of Proposition \ref{942}, the product $\ell\cdot k$ must have the AS property as well.
\end{remark}

Next, we establish a general criterion for the AS property, one that is closely related to the nature of interpolating sequences for $\mathcal{H}_{\ell}$. In particular, we will show that, under some mild additional assumptions, the kernel $\ell$ satisfies the AS property if and only if any weakly separated finite union of ``sufficiently sparse" sequences in $X$ forms an $\mathcal{H}_{\ell}$-interpolating sequence.

\begin{theorem}\label{833}
Suppose $X$ is a topological space, $\ell$ is a kernel on X and the following properties are satisfied:
\begin{itemize}
    \item[(Q0)] No finite collection of kernel functions $\ell_{\lambda_1}, \dots, \ell_{\lambda_m}$ can form a linearly dependent set if $|\{\lambda_1,\dots, \lambda_m\}|=m$; 
    \item[(Q1)] $\ell: X\times X\to\mathbb{C}$ is continuous;
      \item[(Q2)] If $\{\lambda_i\}\subset X$ satisfies $||\ell_{\lambda_i}||\to\infty$, then $||\ell_{\lambda_i}||^{-1}\ell(\lambda_i, \mu)\to 0$ for every $\mu\in X$;
      \item[(Q3)] Let $\{\lambda_i\}\subset X$. Then,  either $||\ell_{\lambda_i} ||\to\infty$ or $\{\lambda_i\}$ contains a subsequence converging to a point inside $X$.
    
\end{itemize}
Then, the following assertions are equivalent:
\begin{itemize}
    \item[(i)] $\ell$ has the automatic separation property.
    \item[(ii)] Let $n\ge 3$ and suppose $\{\lambda_i\}$ is a weakly separated by $\ell$ sequence that can be written as $\{\lambda_i\}=\cup^{\infty}_{j=1}\cup^n_{k=1}\{\mu^k_j\}$, where $\big|\big|\ell_{\mu^k_j}\big|\big|\to\infty$ as $j\to\infty,$ for every $1\le k\le n.$ Then, we can always find a subsequence $\{m_i\}$ such that $\cup^{\infty}_{j=1}\cup^n_{k=1}\{\mu^k_{m_j}\}$ is interpolating for $\mathcal{H}_{\ell}$.

\end{itemize}

\end{theorem}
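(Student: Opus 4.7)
The plan is to establish (i)$\iff$(ii) by emulating the structure of the proof of Theorem~\ref{89}: for the easy direction, we combine the AS hypothesis with (Q2) via a diagonal extraction that makes the Grammian close to a well-conditioned block-diagonal operator; for the harder direction, we argue by contradiction, dissecting a ``witness'' to the failure of AS via (Q3) and using (ii) in place of the invocation of \cite[Proposition~5.1]{InterpolatingPick} that appears in the proof of Theorem~\ref{89}.

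To prove (i)$\Rightarrow$(ii), I would fix the weakly separated sequence $\{\lambda_i\}=\bigcup_{j,k}\{\mu^k_j\}$ from the hypothesis. The AS property promotes weak separation to $n$-weak separation, and an elementary Hilbert space argument (using Lemma~\ref{708} together with the fact that the trace of a Gram matrix of normalized vectors is at most $n$) produces a constant $c'>0$ such that every diagonal block $[\langle \hat{\ell}_{\mu^k_j},\hat{\ell}_{\mu^l_j}\rangle]_{1\le k,l\le n}$ has minimum eigenvalue at least $c'$. On the other hand, (Q2) gives that $\hat{\ell}_{\mu^k_j}\to 0$ weakly as $j\to\infty$ for every fixed $k$. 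Choosing a summable schedule $\{\epsilon_j\}$ and selecting $m_j$ inductively so that every new off-diagonal entry $\langle \hat{\ell}_{\mu^k_{m_j}},\hat{\ell}_{\mu^l_{m_{j'}}}\rangle$ with $j'<j$ is bounded by $\epsilon_j/\sqrt{j n^2}$, I can force the off-diagonal part $E$ of the full Grammian $G$ to have Hilbert--Schmidt (hence operator) norm less than $c'/2$. Thus $G=D+E$ with $D\ge c'I$ block-diagonal and $\|E\|<c'/2$, so $G$ is bounded and bounded below; Lemma~\ref{131} then shows that $\bigcup_{j,k}\{\mu^k_{m_j}\}$ is interpolating for $\mathcal{H}_\ell$.

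For (ii)$\Rightarrow$(i), I would assume the AS property fails and take the minimal $n\ge 3$ for which some weakly separated sequence $\{\lambda_i\}$ fails to be $n$-weakly separated. Unpacking the definition gives $n$ subsequences $\{\lambda^1_m\},\dots,\{\lambda^n_m\}\subset\{\lambda_i\}$ with distinct points at each step $m$ and $\text{dist}(\hat{\ell}_{\lambda^1_m},\text{span}\{\hat{\ell}_{\lambda^2_m},\dots,\hat{\ell}_{\lambda^n_m}\})\to 0$. By (Q3) I may assume each $\{\lambda^k_m\}$ either converges in $X$ or has $\|\ell_{\lambda^k_m}\|\to\infty$, and I split into three cases. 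If all sequences converge to points $p_1,\dots,p_n$, continuity (Q1) together with weak separation forces the $p_k$ to be distinct and the vanishing distance forces $\{\ell_{p_1},\dots,\ell_{p_n}\}$ to be linearly dependent, contradicting (Q0). If $1\le r\le n-1$ sequences converge while the rest diverge, Lemma~\ref{85} makes the Grammian asymptotically block-diagonal, and the $(n-1)$-weak separation (which survives by minimality of $n$) provides uniform lower bounds on the determinants of both blocks; this forces $\liminf_m\det G_m>0$, contradicting distance $\to 0$ via Lemma~\ref{708}. If all sequences diverge, then $\bigcup_k\{\lambda^k_m\}_m$ meets the hypotheses of (ii), so there exists $\{m_i\}$ such that $\bigcup_k\{\lambda^k_{m_i}\}_i$ is interpolating for $\mathcal{H}_\ell$; Lemma~\ref{131} then gives a uniform lower bound on the Gram matrices of its $n$-point subsets, again contradicting distance $\to 0$.

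The main obstacle I anticipate lies in the quantitative bookkeeping for the diagonal extraction in (i)$\Rightarrow$(ii): one needs the block-diagonal constant $c'$ to be extracted \emph{before} the off-diagonal schedule $\{\epsilon_j\}$ is chosen, and the selection of $m_j$ must be compatible both with the uniform $n$-weak separation lower bound on diagonal blocks and with the weak convergence supplied by (Q2). The argument closes because $c'$ depends solely on the $n$-weak separation constant of $\{\lambda_i\}$ (an a~priori quantity once AS is invoked), which allows the $\{\epsilon_j\}$ to be calibrated against it from the start.
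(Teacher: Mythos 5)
Your proposal is correct and follows the same overall architecture as the paper's proof: the same three-way case analysis via (Q3) and Lemma~\ref{85} for (ii)$\Rightarrow$(i), and the same diagonal-extraction strategy for (i)$\Rightarrow$(ii), with $n$-weak separation supplying the uniform lower bound on each $n\times n$ diagonal Gram block and Lemma~\ref{85} supplying the weak vanishing that makes the off-block entries small. The one cosmetic improvement is that by controlling the Hilbert--Schmidt norm of the off-block part of the Grammian you obtain boundedness above and below in a single extraction, whereas the paper first secures (BB) inductively and then performs a second extraction to make the Grammian a Riesz sequence; both are valid, and the remaining details of your sketch (distinctness of the limit points $p_k$ via continuity and weak separation, the determinant splitting in the mixed case, and invoking Lemma~\ref{131}) match the paper.
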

Note that the extra hypotheses imposed on $\ell$ include properties (Q1)-(Q3) from the statement of Theorem \ref{89}, as well as a new condition (Q0) ensuring that no finite collection of kernel functions can be linearly dependent. This is to avoid the somewhat trivial situation where the failure of the AS property would be caused by the existence of linearly dependent kernel functions $\ell_{\lambda_1}, \dots, \ell_{\lambda_m}$ (where $m\ge 3$) such that no two kernels $\ell_{\lambda_i}, \ell_{\lambda_j}$ are linearly dependent if $i\neq j$.

\begin{proof}
First, suppose that $\ell$ satisfies the hypotheses of (ii). Working towards a contradiction, assume that there exists $\{\lambda_i\}\subset X$ and $n\ge 3$ such that $\{\lambda_i\}$ is $(n-1)$-weakly separated but not $n$-weakly separated by $\ell$. Thus, we can find a subsequence $\cup^{\infty}_{j=1}\cup^n_{k=1}\{\mu^k_j\}$ of $\{\lambda_i\}$ where $|\{\mu^1_j, \dots, \mu^n_j\}|=n$ for every $j$ (but we may have $\mu^k_j=\mu^r_i$ if $k\neq r$) and 
\begin{equation}
    \lim_j\text{dist}\big(\hat{\ell}_{\mu^1_j}, \text{ span}\big\{\hat{\ell}_{\mu^2_j}, \dots, \hat{\ell}_{\mu^n_j}\big\} \big)=0.
    \label{113}
\end{equation}
In view of Q(0), $\cup^{\infty}_{j=1}\cup^n_{k=1}\{\mu^k_j\}$ must contain infinitely many points. We can also assume, without loss of generality, that each subsequence $\{\mu^k_j\}_j$ either satisfies $\big|\big|\ell_{\mu^k_j}\big|\big|\to\infty$ or converges to a point in $X.$ This is possible because of (Q3) (we can keep extracting subsequences until we have the desired properties). \par

Now, if every $\{\mu^k_j\}_j$ converges to a point $p_k$ in $X$, we
can show (as in the proof of Theorem \ref{89}) that the kernel functions $\ell_{p_1}, \dots, \ell_{p_n}$ are linearly dependent, which contradicts (Q0). On the other hand, if at least one subsequence $\{\mu^{k'}_j\}_j$ satisfies $\big|\big|\ell_{\mu^{k'}_j}\big|\big|\to\infty$ and at least one subsequence $\{\mu^k_j\}_j$ converges to a point in $X$, we
arrive at a contradiction (using properties (Q1)-(Q3)) in precisely the same manner as in the proof of Theorem \ref{89}. Finally, if $\big|\big|\ell_{\mu^k_j}\big|\big|\to\infty$ as $j\to\infty, $ for every $k$, the sequence $\cup^{\infty}_{j=1}\cup^n_{k=1}\{\mu^k_j\}$ will satisfy the hypotheses of (ii). Thus, we can extract a subsequence $\cup^{\infty}_{j=1}\cup^n_{k=1}\{\mu^k_{m_j}\}$ of $\cup^{\infty}_{j=1}\cup^n_{k=1}\{\mu^k_j\}$ that is interpolating for $\mathcal{H}_{\ell}.$ In particular, $\cup^{\infty}_{j=1}\cup^n_{k=1}\{\mu^k_{m_j}\}$ must be $n$-weakly separated by $\ell$, which contradicts (\ref{113}). Hence, $\ell$ must be an AS kernel. \par
For the converse, assume that $\ell$ possesses the automatic separation property. Let $n\ge 3$ and suppose $\{\lambda_i\}$ is a weakly separated by $\ell$ sequence that can be written as $\{\lambda_i\}=\cup^{\infty}_{j=1}\cup^n_{k=1}\{\mu^k_j\}$, where $\big|\big|\ell_{\mu^k_j}\big|\big|\to\infty$ for every $1\le k\le n.$ Assume also, for convenience, that $|\{\mu^1_j, \dots, \mu^n_j \}|=n$ for every $j$. The AS property tells us that  $\{\lambda_i\}$ has to be $n$-weakly separated by $\ell$, for every $n\ge 3$. Arguing as in the proof of Lemma \ref{18}, we can show that there exists $\epsilon>0$ such that the  matrices 
$$\Big[(1-w_{m, k}\overline{w}_{m. r})\big\langle \hat{\ell}_{\mu^r_j}, \hat{\ell}_{\mu^k_j}  \big\rangle \Big]_{1\le k,r \le n} $$
are positive semi-definite for every $m\in\{1, \dots, n\}$ and $j\ge 1$, where $w_{k, k}=\epsilon$ and $w_{m, k}=0$ if $m\neq k$ (without the assumption that $|\{\mu^1_j, \dots, \mu^n_j \}|=n$, we would need to consider matrix blocks of non-constant size $|\{\mu^1_j, \dots, \mu^n_j \}|\le n$, but that wouldn't affect the proof in any way). Adding together the matrices corresponding to $m=1, 2,\dots, n$, we obtain the positivity condition
\begin{equation}
    \sum_{k, r=1}^n a^k\overline{a^r}\big\langle \hat{\ell}_{\mu^r_j}, \hat{\ell}_{\mu^k_j}  \big\rangle \ge c\sum_{k=1}^n|a^k|^2, \label{121}
\end{equation}
for every $j\ge 1$ and every choice of scalars $a^k, a^r$ ($c$ can be taken to be $\epsilon^2/n$). 
\\ Our next step will be to extract a subsequence  $\cup^{\infty}_{j=1}\cup^n_{k=1}\{\mu^k_{m_j}\}$ that has a bounded below Grammian. To achieve this, it suffices to find a subsequence $\{m_j\}$ such that for every $\rho\ge 1,$
\begin{equation}
  \sum^{\rho}_{i, j=1}\sum^n_{k, r=1}a^k_i\overline{a^r_j} \Big\langle \hat{\ell}_{\mu^r_{m_j}}, \hat{\ell}_{\mu^k_{m_i}} \Big\rangle > \frac{c}{2}\bigg[2-\sum_{i=1}^{\rho}2^{-i} \bigg]\sum^{\rho}_{i=1}\sum^n_{k=1} |a^k_i|^2,  
 \label{122}   
\end{equation}
for every choice of scalars $a^k_i, a^r_j$. \par We will construct $\{m_j\}$ inductively. First, choose $m_1=1$. Then, (\ref{122}) will be satisfied (for $\rho=1$) because of (\ref{121}). Now, suppose that $\rho$ integers $\{m_1, \dots, m_{\rho}\}$ have been chosen so that (\ref{122}) is satisfied. Without loss of generality, we can take our scalars $a^k_i$ to satisfy $\sum^{\rho+1}_{i=1}\sum^n_{k=1} |a^k_i|^2= 1$. By assumption, we know that $\big|\big|\ell_{\mu^k_j}\big|\big|\to\infty$ for every $k$ and hence, by Lemma (\ref{85}), there exists an integer $m_{\rho+1}$ such that 
\begin{equation}
    \big|\big\langle  \hat{\ell}_{\mu^r_{m_{\rho+1}}}, \hat{\ell}_{\mu^k_{m_i}}  \big\rangle\big|\le \frac{2^{-(\rho+2)}}{2n^2\rho},
 \end{equation}
for every $k, r\in\{1, \dots,n\}$ and $i\in\{1, \dots, \rho\}$. This implies that 
\begin{equation}
2\Bigg|\sum^{\rho}_{i=1}\sum^n_{k, r=1}a^k_i\overline{a^r_{\rho+1}} \Big\langle \hat{\ell}_{\mu^r_{m_{\rho+1}}}, \hat{\ell}_{\mu^k_{m_i}} \Big\rangle\Bigg| \le 2^{-(\rho+2)}c.
 \label{123}
\end{equation}
Now, we can combine our inductive hypothesis with (\ref{121}) and (\ref{123})  to obtain 
$$ \sum^{\rho+1}_{i, j=1}\sum^n_{k, r=1}a^k_i\overline{a^r_j} \Big\langle \hat{\ell}_{\mu^r_{m_j}}, \hat{\ell}_{\mu^k_{m_i}} \Big\rangle $$ $$= \sum^{\rho}_{i, j=1}\sum^n_{k, r=1}a^k_i\overline{a^r_j} \Big\langle \hat{\ell}_{\mu^r_{m_j}}, \hat{\ell}_{\mu^k_{m_i}} \Big\rangle+2\Re\Bigg[ \sum^{\rho}_{i=1}\sum^n_{k, r=1}a^k_i\overline{a^r_{\rho+1}} \Big\langle \hat{\ell}_{\mu^r_{m_{\rho+1}}}, \hat{\ell}_{\mu^k_{m_i}} \Big\rangle\Bigg]$$ $$+  \sum_{k, r=1}^n a^k_{\rho+1}\overline{a^r_{\rho+1}}\Big\langle \hat{\ell}_{\mu^r_{m_{\rho+1}}}, \hat{\ell}_{\mu^k_{m_{\rho+1}}}  \Big\rangle$$ $$> \frac{c}{2}\bigg[2-\sum_{i=1}^{\rho}2^{-i} \bigg]\sum^{\rho}_{i=1}\sum^n_{k=1} |a^k_i|^2-2^{-(\rho+2)}c+c\sum^n_{k=1}|a^k_{\rho+1}|^2>$$
$$>\frac{c}{2}\bigg[2-\sum_{i=1}^{\rho}2^{-i} \bigg]\sum^{\rho+1}_{i=1}\sum^n_{k=1} |a^k_i|^2-2^{-(\rho+2)}c $$
$$=\frac{c}{2}\bigg[2-\sum_{i=1}^{\rho+1}2^{-i} \bigg],$$
as desired. \par 
We have thus found a subsequence $\cup^{\infty}_{j=1}\cup^n_{k=1}\{\mu^k_{m_j}\}$ that has a bounded below Grammian. Now, since (by Lemma (\ref{85})) $\hat{\ell}_{\mu^k_{m_j}}$ converges to $0$ weakly for every $k\in\{1, \dots, n\}$, we easily see that it must have a subsequence (which we denote by $\big\{\hat{\ell}_{\nu^k_{j}}\big\}$) that is a Riesz sequence. This implies that $\cup^{\infty}_{j=1}\cup^n_{k=1}\{\nu^k_{j}\}$ must have a bounded Grammian and so it has to be interpolating for $\mathcal{H}_{\ell}$ by Lemma \ref{131}. This concludes the proof.
\end{proof}

 An apparent limitation in applying Theorem \ref{833} is that one first needs to know  that there exists a sufficient condition for $\mathcal{H}_{\ell}$-interpolation having the form \{(WS) by $\ell$ \text{ }$+$\text{ } (D)\}, with the property that condition (D) is always satisfied by any finite union of ``sufficiently sparse" sequences. Of course, establishing the sufficiency of such a condition in the first place is usually a highly non-trivial problem! Nevertheless, as there already exists a large literature on the subject of interpolating sequences in different function spaces, Theorem \ref{833} will be of use in discovering new examples of AS kernels. \par 
 First, we prove a lemma which will aid us in situations where a different metric or distance function is used instead of the one induced by the kernel.
 \begin{lemma}\label{930}
 Let $\ell$ be a kernel on the topological space $X$  satisfying conditions (Q1)-(Q3) from the statement of Theorem \ref{833} and also let  $\rho: X\times X\to [0, \infty)$ be a continuous function such that $\rho(\lambda,  \mu)=0$ if and only if $\lambda=\mu$. Assume that if $\{\mu_i\}\subset X$ converges to a point in $X$ and $\{\nu_i\}\subset X$ satisfies $||\ell_{\nu_i}||\to\infty,$ then there exists $\epsilon>0$ and $i_0\in\mathbb{N}$ such that $$\rho(\mu_i, \nu_i)>\epsilon ,$$ for all $i\ge i_0$. Finally, suppose that there exists a condition (C) such that a sequence $\{\lambda_i\}\subset X$ is interpolating for $\mathcal{H}_{\ell}$ if and only if it satisfies (C) and also there exists $\delta>0$ such that $\rho(\lambda_i, \lambda_j)>\delta$, for every $i\neq j$. Then, a sequence in $X$ is interpolating for $\mathcal{H}_{\ell}$ if and only if it satisfies (C) and is weakly separated by $\ell.$
 \end{lemma}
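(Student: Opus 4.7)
The plan is to establish both directions of the stated equivalence, with essentially all of the work concentrated in the backward direction.

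For the forward direction, the hypothesised characterisation supplies $(C)$ as soon as $\{\lambda_i\}$ is interpolating for $\mathcal{H}_\ell$, while $\ell$-weak separation is automatic for any interpolating sequence: Lemma \ref{131} supplies a bounded-below Grammian of normalised kernels, which forces $|\langle \hat\ell_{\lambda_i}, \hat\ell_{\lambda_j}\rangle|$ to stay away from $1$ whenever $i \neq j$.

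For the backward direction I would assume $\{\lambda_i\}$ satisfies $(C)$ and is $\ell$-weakly separated, then show it is $\rho$-separated (after which the hypothesised characterisation closes the argument). Suppose otherwise, and pick $\mu_k = \lambda_{i_k}$, $\nu_k = \lambda_{j_k}$ with $i_k \neq j_k$ and $\rho(\mu_k, \nu_k) \to 0$. Using $(Q3)$, refine so that each of $\{\mu_k\}$, $\{\nu_k\}$ either converges in $X$ or has $\|\ell_\cdot\| \to \infty$. If both converge, continuity of $\rho$ collapses their limits to a common point, and continuity of $\ell$ (condition $(Q1)$) then forces $d_\ell(\mu_k, \nu_k) \to 0$, contradicting weak separation. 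If exactly one converges while the other escapes, the explicit hypothesis of the lemma yields $\rho(\mu_k, \nu_k) > \epsilon$ eventually, contradicting $\rho(\mu_k, \nu_k) \to 0$.

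The substantive case is when both $\|\ell_{\mu_k}\|, \|\ell_{\nu_k}\| \to \infty$, and this is the main obstacle. My plan is to thin the pair sequence $\{(\mu_k, \nu_k)\}$ further to indices $k_1 < k_2 < \cdots$ so that $\{\hat\ell_{\mu_{k_j}}\} \cup \{\hat\ell_{\nu_{k_j}}\}$ forms a Riesz system in $\mathcal{H}_\ell$ --- equivalently, an interpolating sequence for $\mathcal{H}_\ell$ via Lemma \ref{131} --- and then invoke the hypothesised characterisation to force $\rho$-separation of this sub-subsequence, which directly contradicts $\rho(\mu_{k_j}, \nu_{k_j}) \to 0$. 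The Riesz thinning mimics the inductive Grammian construction in the proof of Theorem \ref{833}: $\ell$-weak separation between $\mu_k$ and $\nu_k$ bounds the $2 \times 2$ Grammian block at each $k$ uniformly below by Sylvester's criterion (no Pick-factor input is needed, because block size $2$ matches the available $2$-weak separation), while $(Q2)$ makes $\hat\ell_{\mu_k}$ and $\hat\ell_{\nu_k}$ tend weakly to $0$, so cross-block inner products can be made arbitrarily small by choosing $k_{j+1}$ sufficiently far ahead of $k_j$. The delicate point is to confirm that the Theorem \ref{833} inductive construction really does go through with plain weak separation in place of $n$-weak separation; the matching of block size $2$ with weak separation is what makes this feasible, and I expect this to be the crux of writing the proof out in detail.
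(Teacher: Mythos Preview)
Your proposal is correct and follows essentially the same route as the paper's proof: the same contradiction setup, the same three-case split via (Q3), and in the main case the same appeal to the inductive Riesz-thinning construction from the proof of Theorem~\ref{833} (with block size $2$ matching weak separation, exactly as you noted). The paper's treatment of the ``both converge'' case is phrased in the reverse order (first using weak separation and (Q1) to force $p_1\neq p_2$, then contradicting via continuity of $\rho$), but this is logically interchangeable with your version.
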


\begin{proof}
 One direction is obvious; since any sequence $\{\lambda_i\}$ satisfying $\rho(\lambda_i, \lambda_j)>\delta$, for $i\neq j$, and (C) is interpolating for $\mathcal{H}_{\ell}$, it must also be weakly separated by $\ell.$ \par
 For the converse, suppose that $\{\lambda_i\}$ satisfies (C) and is weakly separated by $\ell.$ Aiming towards a contradiction, assume that there exist subsequences $\{\mu_i\}, \{\nu_i\}\subset \{\lambda_i\}$  such that $\mu_i\neq \nu_i$ for every $i$ and also 
 \begin{equation} \label{931}
 \lim_i\rho(\mu_i, \nu_i)=0.
 \end{equation}
 
 We can additionally assume (in view of (Q3)) that $\{\mu_i\}$ either converges to a point in $X$ or satisfies $||\ell_{\mu_i}||\to\infty$ and an analogous assumption can be made for $\{\nu_i\}$. Now, if both of them converge to points $p_1, p_2\in X$, the assumptions that $\ell$ is continuous and $\{\lambda_i\}$ is weakly separated imply that $d_{\ell}(p_1, p_2)\neq 0$, hence $p_1\neq p_2.$ But then, continuity of $\rho$ implies that 
 $$0= \lim_i\rho(\mu_i, \nu_i)=\rho(p_1, p_2)\neq 0,$$
 a contradiction. On the other hand, if $\{\mu_i\}$ converges to a point in $X$ and $\{\nu_i\}$ satisfies $||\ell_{\nu_i}||\to\infty$, we can find (in view of our assumptions) $\epsilon>0$ and $i_0\in\mathbb{N}$ such that $\rho(\mu_i, \nu_i)>\epsilon$ for all $i\ge i_0$, which again contradicts (\ref{931}). Finally, assume that $\{\mu_i\}, \{\nu_i\}$ both satisfy $||\ell_{\mu_i}||\to\infty$ and $||\ell_{\nu_i}||\to\infty$. Mimicking the proof of ``(i)$\Rightarrow$(ii)" from Theorem \ref{833}, we can extract a subsequence $\{m_i\}$ such that the union $\{\mu_{m_i}\}\cup \{\nu_{m_i}\}$ is interpolating for $\mathcal{H}_{\ell}$. This implies that $\{\mu_{m_i}\}\cup \{\nu_{m_i}\}$ must also satisfy $\rho(\mu_{m_i}, \nu_{m_i})>\delta>0$, for all $i$, which contradicts (\ref{931}). Our proof is complete.
 \end{proof}
 \begin{remark} \label{935}
In the setting of Lemma \ref{930}, assume that condition (C) possesses the following additional property:
 every sequence in $X$ that can be written as $\{\lambda_i\}=\cup^{\infty}_{j=1}\cup^n_{k=1}\{\mu^k_j\}$, where $\big|\big|\ell_{\mu^k_j}\big|\big|\to\infty$ as $j\to\infty,$ for every $1\le k\le n,$ contains a subsequence of the form $\cup^{\infty}_{j=1}\cup^n_{k=1}\{\mu^k_{m_j}\}$ that satisfies (C). If we also have that no two functions $\ell_z, \ell_w$ are linearly dependent if $z\neq w$, the conclusion of Lemma \ref{930} can be strengthened to give: $\{\lambda_i\}\subset X$ is weakly separated by $\ell$ if and only if there exists $\delta>0$ such that $\rho(\lambda_i, \lambda_j)>\delta$, for every $i\neq j$. We omit the proof of this claim, as it is very similar to that of the previous Lemma.
 \end{remark}
 
  Now, recall that, as seen in subsection \ref{302}, every kernel possessing the multiplier separation property must also be an AS kernel. In particular, Theorem \ref{843} tells us that, in a certain sense, ``proximity" to powers of products of $2$-point Pick kernels guarantees that the kernel will have the AS property. Well-studied examples of spaces associated with such kernels include (apart, of course, from complete Pick spaces) Bergman spaces with standard weights on the $n$-dimensional unit ball, where the reproducing kernel is equal to
  $$\ell_{\alpha}(\mathbf{z}, \mathbf{w})=\frac{1}{(1-\langle \mathbf{z}, \mathbf{w} \rangle)^{n+1+\alpha}}, \hspace{0.2 cm} \mathbf{z}, \mathbf{w}\in\mathbb{B}_n,  \hspace{0.1 cm} \alpha>-1.$$
  The norm of $\mathcal{H}_{\ell_{\alpha}}$ is given by integration against the weighted Lebesgue measure $dv_{\alpha}$ defined as
  $dv_{\alpha}(z)=c_{\alpha}(1-|z|^2)^{\alpha}dv(z),$ where $c_{\alpha}$ ensures that $dv_{\alpha}$ is a probability measure. See \cite{Zhuspaces} for more details on these spaces.
  
  \par 
  
  What happens then when we move beyond such weights? It would be natural to look at so-called ``large" Bergman spaces, where the weights are rapidly decreasing. One major difficulty  when studying such spaces arises from the lack of an explicit expression for the reproducing kernels  (see \cite{Largebergmanbackground} and the references therein for more information). For a particular example, let $\mathcal{H}_{k}$ denote the Bergman space on $\mathbb{D}$ with weight  $\exp\big(-\frac{1}{1-|z|^2}\big)$ and let 
 $$z_j=1-2^{-j}, \hspace{0.2 cm} w_j=z_j+i2^{-5j/4}, \text{ }j\ge 0.$$
As explained in \cite[Example 4.13]{InterpolatingPick}, $\{z_j\}\cup\{w_j\}$ forms a sequence in $\mathbb{D}$ that is $\mathcal{H}_{k}$-interpolating (in particular, $\{z_j\}\cup\{w_j\}$ must be weakly separated by $k$), but not weakly separated by $H^{\infty}=\text{Mult}(\mathcal{H}_{k})$. Thus, $k$ does not have the multiplier separation property and Proposition \ref{101} does not apply.
\par

Still, it turns out that $k$, as well as the kernels of many other large Bergman spaces on $\mathbb{D}$, is an AS kernel. 
\begin{exmp}[\textit{Large Bergman spaces on $\DD$}]\label{1002}  \hfill  \par
Given an increasing function $h: [0, 1)\to [0, \infty)$ such that $h(0)=0$ and $\lim_{r\to 1}h(r)=+\infty$, we extend it by $h(z)=h(|z|), z\in\DD.$ We also assume that $h\in C^2(\DD)$ and $\Delta h(z)=\Big(\frac{\partial^2}{\partial x^2}+\frac{\partial^2}{\partial y^2} \Big)h(x+iy)\ge 1.$ Define the weighted Bergman space
$$A^2_{h}(\DD)=\bigg\{f\in\text{Hol}(\DD): ||f||^2_{h}=\int_{\DD}|f(z)|^2e^{-2h(z)}dm(z)<\infty\bigg\},$$
where $dm$ is area measure.\par  $A^2_{h}(\DD)$-interpolating sequences for weights of polynomial growth (i.e. $h(z)=-\alpha\log(1-|z|), \text{ } \alpha>0$) were characterized by Seip in \cite{Seipinventiones} (see also \cite{Seip} for a thorough account). Later on, Borichev, Dhuez and Kellay \cite{Largebergman} tackled the case of radial weights of arbitrary (more than polynomial) growth. To state their results, let 
$$\rho(r)=\big[ (\Delta h)(r)\big]^{-1/2}, \hspace{0.3 cm} 0\le r<1.$$
Certain natural growth restrictions are imposed on $\rho$ in the setting of \cite{Largebergman}. Examples of admissible $h$ include 
$$h(r)=\log\log\frac{1}{1-r}\cdot\log\frac{1}{1-r}, \hspace{0.4 cm} h(r)=\frac{1}{1-r} \hspace{0.2 cm} \text{ and } \hspace{0.2 cm}  h(r)=\exp\frac{1}{1-r}.$$
Now, let $\mathcal{D}(z ,r)$ denote the disc of radius $r$ centered at $z$ and define $$d_{\rho}(z, w)=\frac{|z-w|}{\min\{\rho(z), \rho(w) \}}, \hspace{0.3 cm} z,w\in\DD.$$  A subset $\Gamma\subset\DD$ will be called \textit{$d_{\rho}$-separated} if there exists $c>0$ such that $d_{\rho}(z, w)>c,$ for all $z, w\in\Gamma$ such that $z\neq w.$ Also, define the \textit{upper $d_{\rho}$-density} of $\Gamma$ to be 
$$D^{+}_{\rho}(\Gamma)=\limsup_{R\to\infty}\limsup_{|z|\to 1, z\in\DD}\frac{\text{Card}(\Gamma\cap \mathcal{D}(z, R\rho(z)))}{R^2}.$$
By \cite[Theorem 2.4]{Largebergman}, a sequence $\{\lambda_i\}$ is interpolating for $A^2_{h}(\DD)$ if and only if it is $d_{\rho}$-separated and satisfies $D^{+}_{\rho}(\Gamma)<\frac{1}{2}.$ \par 
 Can Theorem \ref{833} be applied here? Let $\ell_{h}$ denote the associated reproducing kernel. First, note that $A^2_{h}(\DD)$ satisfies condition (Q0), as it contains all polynomials (thus, if $\mu_1, \dots, \mu_m\in\DD$ are $m$ distinct points, we can always find $f\in A^2_{h}(\DD)$ such that $f(\mu_1)=\dots=f(\mu_{m-1})=0$ and $f(\mu_m)\neq 0$). It also satisfies (Q1)-(Q3) and $||(\ell_{h})_{\lambda_i}||\to\infty$ if and only if $|\lambda_i|\to 1$ (see Theorems 3.2-3.3 in \cite{Largebergmanbackground}). Next, we observe that the conditions of Lemma \ref{930} are satisfied as well. Indeed, if $\{\mu_i\}\subset\DD$ converges to a point in $\DD$ and $\{\nu_i\}\subset\DD$ satisfies $||(\ell_h)_{\nu_i}||\to\infty$, then the fact that $\rho$ decreases to $0$ near the point $1$ (one of the additional restrictions imposed on $\rho$) implies that we can find $\epsilon>0$ such that $d_{\rho}(\mu_i, \nu_i)>\epsilon$ for all $i$. Thus, we can deduce that $\{\lambda_i\}$ is interpolating for $A^2_{h}(\DD)$ if and only if it is weakly separated\footnote{Actually, in view of Remark \ref{935}, one can easily show that $d_{\rho}$-separation is equivalent to weak separation by $\ell_h$.} by $\ell_h$ and satisfies $D^{+}_{\rho}(\Gamma)<\frac{1}{2}.$ Finally, suppose that $\{\lambda_i\}$ is a weakly separated by $\ell_h$ sequence that can be written as $\{\lambda_i\}=\cup^{\infty}_{j=1}\cup^n_{k=1}\{\mu^k_j\}$, where $\big|\big|(\ell_h)_{\mu^k_j}\big|\big|\to\infty$ as $j\to\infty,$ for every $1\le k\le n.$ Then, we can always find a subsequence $\{m_j\}$ such that  $S=\cup^{\infty}_{j=1}\cup^n_{k=1}\{\mu^k_{m_j}\}$ satisfies $D^{+}_{\rho}(S)=0$. Hence, $S$ must be interpolating for $A^2_{h}(\DD)$ and Theorem \ref{833} allows us to conclude that $\ell_h$ has the AS property.

\end{exmp}

For our next example, we turn to spaces of Bargmann-Fock type. Since these are spaces of entire functions, their multiplier algebras will consist solely of constants. This implies that the associated reproducing kernels will not contain any (non-trivial) complete Pick factors, thus Bargmann-Fock spaces are not really relevant in the context of Question \ref{4}. Still, it is worth noting that their kernels satisfy, in general, the AS property.

\begin{exmp}[\textit{Bargmann-Fock spaces on $\mathbb{C}^n$}]\label{1003}
Given $n\ge 1$ and $\alpha>0,$ the Bargmann-Fock space over $\mathbb{C}^n$ is defined as
$$F^2_{\alpha}=\bigg\{f\in\text{Hol}(\mathbb{C}^n): ||f||^2_{\alpha}=\int_{\mathbb{C}^n}|f(z)|^2e^{-\alpha|z|^2}dm(z)<\infty\bigg\}, $$
where $dm$ denotes Lebesgue measure on $\mathbb{C}^n$. \par 
Interpolating sequences for $F^2_{\alpha}$ in one variable have been completely characterized by Seip \cite{SeipFock} and Seip-Wallst\'{e}n \cite{SeipWallstFock}. For the general case, a  sufficient condition for $F^2_{\alpha}$-interpolation was given by Massaneda and Thomas in \cite{MassanedaPascal} (the authors also gave a necessary condition, although, as they admit, the gap between the two is rather large). Given $\{\lambda_i\}\subset\mathbb{C}^n$, we say that $\{\lambda_i\}$ is \textit{separated} if there exists $\delta>0$ such that $|\lambda_i-\lambda_j|>\delta$, for all $i\neq j.$ Also, let $B(z, r)$ denote the ball of center $z\in\mathbb{C}^n$ and radius $r.$ Given $\Gamma\subset\mathbb{C}^n,$ the \textit{upper density} of $\Gamma$ is defined as 
$$D^{+}(\Gamma)=\limsup_{r\to\infty}\sup_{z\in\mathbb{C}^n}\frac{\text{Card}(\Gamma\cap B(z,r))}{r^2}.$$
\cite[Theorem 5.1]{MassanedaPascal} states that any    $\{\lambda_i\}\subset\mathbb{C}^n$ that is separated and satisfies $D^{+}(\{\lambda_i\})<\alpha/n$ must be interpolating for  $F^2_{\alpha}$.  \par 
Now, let $\ell(z, w)=e^{\alpha z\overline{w}}$ denote the reproducing kernel of $F^2_{\alpha}$. Since $|\langle \hat{\ell}_{z}, \hat{\ell}_{w} \rangle|^2=e^{-\alpha|z-w|^2}$, a sequence $\{\lambda_i\}$ is weakly separated by $\ell$ if and only if it is separated. Also, it can be easily verified that $\ell$ satisfies conditions (Q0)-(Q3) from Theorem  \ref{833}. Next, suppose that $\{\lambda_i\}$ is a weakly separated by $\ell$ sequence that can be written as $\{\lambda_i\}=\cup^{\infty}_{j=1}\cup^n_{k=1}\{\mu^k_j\}$, where $\big|\big|\ell_{\mu^k_j}\big|\big|\to\infty$ as $j\to\infty,$ for every $1\le k\le n.$ This implies that $|\mu^k_j|\to\infty$, for every $1\le k\le n,$ hence we can always find a subsequence $\{m_j\}$ such that  $S=\cup^{\infty}_{j=1}\cup^n_{k=1}\{\mu^k_{m_j}\}$ satisfies $D^{+}(S)=0$. In view of \cite[Theorem 5.1]{MassanedaPascal}, $S$ must be interpolating for $F^2_{\alpha}$. By Theorem \ref{833}, we can conclude that $\ell$ has the AS property.
\end{exmp}
 \begin{remark}
 Let $\phi:\mathbb{C}\to\mathbb{R}$ be a subharmonic function and consider the weighted Bargmann-Fock space defined on $\DD$ by
 $$F^2_{\phi}=\bigg\{f\in\text{Hol}(\mathbb{C}): ||f||^2_{\phi}=\int_{\mathbb{C}}|f(z)|^2e^{-2\phi(z)}dm(z)<\infty\bigg\}.$$
 In \cite{BerndtssonCerdaSufficiency} and \cite{CerdaSeipNecessity}, the results of Seip \cite{SeipFock} and Seip-Wallst\'{e}n \cite{SeipWallstFock} were extended to $F^2_{\phi}$ with $\Delta \phi 	\simeq 1$. Later on, Marco, Massaneda and Ortega-Cerd\`a \cite{MarcoMassanedaCerda} described interpolating sequences for $F^2_{\phi}$ for a wide class of $\phi$ such that $\Delta\phi$ is a doubling measure. A further extension was achieved by Borichev, Dhuez and Kellay in \cite{Largebergman}, where a class of radial $\phi$ having more than polynomial growth was considered. Somewhat more recently, a sufficient condition for interpolation in ``small" Bargmann-Fock spaces (where $\phi(z)=\alpha(\log^{+}|z|)^2)$ was given by Baranov, Dumont, Hartmann and Kellay in \cite[Theorem 1.6]{SmallFock}. A common characteristic shared by all conditions that appear in the previously mentioned results (regardless of whether they are both necessary and sufficient or merely sufficient for $F^2_{\phi}$-interpolation) is that they have the form: $\{\text{\{separation by }d$\}$ \text{ }+ \text{ }\text{(D)}\}$, where $d: \mathbb{C}\times \mathbb{C}\to[0,\infty)$ plays the role of a distance function and (D) is a density condition that is, roughly, always satisfied by any finite union of ``sufficiently sparse" sequences. Thus, letting $\ell_{\phi}$ denote the kernel of $F^2_{\phi}$, Theorem \ref{833} tells us that, for any $\phi$ corresponding to one of the previous cases and such that $\ell_{\phi}$ satisfies (Q0)-(Q3) and $d$ satisfies the hypotheses of Lemma \ref{930},  $\ell_{\phi}$ will have the AS property.  
  
 \end{remark}
 We end this subsection by giving a general class of pairs $(s, \ell)$ for which Question \ref{4} has a positive answer. 
 \begin{theorem}\label{1000}
Let $\ell$ be a kernel on $X$ defined by performing a finite number of any of the operations from Proposition \ref{942} to one or more kernels having the $2$-point Pick property and/or to one or more kernels from Example \ref{1002}. Suppose also that $s$ is a complete Pick factor of $\ell$. Then, a sequence $\{\lambda_i\}\subset X$
is interpolating for $\text{Mult}(\mathcal{H}_{s}, \mathcal{H}_{\ell})$ if and only if it satisfies the Carleson measure condition for $\mathcal{H}_s$ and is weakly separated by $\ell$.

 \end{theorem}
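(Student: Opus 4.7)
The plan is to reduce the result to the automatic separation property via Corollary \ref{507}. More precisely, the strategy has two stages: first establish that $\ell$ itself is an AS kernel, and then invoke Corollary \ref{507}, which already gives the equivalence between $\text{Mult}(\mathcal{H}_s, \mathcal{H}_\ell)$-interpolation and the conjunction of (CM) with respect to $\mathcal{H}_s$ and weak separation by $\ell$ for any pair $(s, \ell)$ in which $\ell$ is AS and $s$ is a normalized complete Pick factor of $\ell$. Once $\ell$ is shown to be AS, the theorem follows immediately.

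To establish that $\ell$ is AS, I would argue by induction on the number of operations used to construct $\ell$ from the base kernels. For the base case I need to confirm that each of the two families of ``atomic'' kernels is AS. Kernels with the $2$-point scalar Pick property possess the multiplier separation property (as verified in Example \ref{821}), and hence, by Proposition \ref{101}, they have the AS property. Kernels from Example \ref{1002} (the large weighted Bergman spaces on $\DD$ covered by Borichev--Dhuez--Kellay) were shown directly in that example to be AS by an application of Theorem \ref{833}. So both types of base kernels are AS.

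For the inductive step, I would apply Proposition \ref{942}, which states that each of the operations (a)--(e)---tensor product, power, composition with a map $\phi$, sandwiching between $\ell$ and $\ell^a$ via positive semi-definite ratios, and norm-equivalent change of kernel---takes AS kernels to AS kernels. Hence a finite iteration of these operations starting from the AS base kernels above produces an AS kernel, so $\ell$ is AS as claimed.

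With $\ell$ an AS kernel and $s$ a normalized complete Pick factor of $\ell$, Corollary \ref{507} directly yields the desired characterization: $\{\lambda_i\}\subset X$ is interpolating for $\text{Mult}(\mathcal{H}_s, \mathcal{H}_\ell)$ if and only if it satisfies (CM) for $\mathcal{H}_s$ and is weakly separated by $\ell$. There is no essential obstacle in the argument itself---the work has all been done in Sections \ref{5} and \ref{301}; the only ``care'' required is simply checking that the hypothesis that $\ell$ is built by \emph{finitely many} applications of operations (a)--(e) legitimately permits an induction whose base case covers both the $2$-point Pick family and the kernels of Example \ref{1002}. Thus the proof is essentially a bookkeeping application of Proposition \ref{942}, Proposition \ref{101}, Example \ref{821}, Example \ref{1002}, and Corollary \ref{507}.
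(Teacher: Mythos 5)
Your proposal is correct and follows exactly the same route as the paper's own (one-line) proof of Theorem \ref{1000}: it reduces the claim to Corollary \ref{507} after noting that the base kernels are AS (via Example \ref{821} and Proposition \ref{101} for the $2$-point Pick case, and Example \ref{1002} for the large Bergman case) and that Proposition \ref{942} propagates the AS property through the listed operations. You have merely unpacked the finite-iteration step as an explicit induction, which is the intended reading.
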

 \begin{proof}
This is a consequence of Corollary \ref{507} and of Propositions \ref{101} and \ref{942}, since $2$-point Pick kernels and the kernels from Example \ref{1002} are all AS kernels. 
 \end{proof}

\subsection{A holomorphic non-example} \label{304}
 In this subsection, we construct a holomorphic pair $(s, \ell)$ on $\DD^2$, where $s$ is a CP factor of $\ell$ and $\ell$ satisfies properties (Q0)-(Q3) from Theorem \ref{833}, such that there exists an (infinite) sequence $\{\lambda_i\}\subset\DD^2$ satisfying the Carleson measure condition for $\mathcal{H}_{s}$ and being weakly separated by $\ell$, but not $\text{Mult}(\mathcal{H}_{s}, \mathcal{H}_{\ell})$-interpolating. In particular, we will construct a sequence that is weakly but not $4$-weakly separated by $\ell.$
 \begin{exmp}\label{1100}
 Let $\mathcal{H}_k$ denote the weighted Bergman space on $\DD$ with weight $e^{-\frac{1}{1-|z|^2}}$ and define the kernels
 $$\ell((\lambda_1, \lambda_2),(\mu_1, \mu_2))=\frac{k(\lambda_1, \mu_1)+k(\lambda_2, \mu_2)}{(1-\lambda_1\overline{\mu}_1)(1-\lambda_2\overline{\mu}_2)},$$
 and 
$$s((\lambda_1, \lambda_2),(\mu_1, \mu_2))=\frac{1}{2-\lambda_1\overline{\mu}_1-\lambda_2\overline{\mu}_2}=\frac{\frac{1}{2}}{1-\big\langle \frac{1}{\sqrt{2}}(\lambda_1, \lambda_2), \frac{1}{\sqrt{2}}(\mu_1, \mu_2)\big\rangle}, $$
 where $(\lambda_1, \lambda_2),(\mu_1, \mu_2)\in\DD\times\DD.$ \par Evidently, $s$ is a complete Pick kernel. Also, letting $H^2_{\DD^2}$ denote the Hardy space on $\DD^2,$ we observe that the vector-valued function $\phi(\lambda_1, \lambda_2)= \begin{bmatrix}
    \lambda_1/\sqrt{2} &  \lambda_2/\sqrt{2} 
 \end{bmatrix}\in\text{Mult}(H^2_{\DD^2}\otimes \mathbb{C}^2, H^2_{\DD^2})$ is a contractive multiplier. As seen in subsection \ref{1030}, this implies that $s$ is a complete Pick factor of $1/(1-\lambda_1\overline{\mu}_1)(1-\lambda_2\overline{\mu}_2)$ and hence also of $\ell.$ Regarding $\ell$, it is easily verified that $\ell((\lambda_1, \lambda_2),(\mu_1, \mu_2)) $ is of the form $\sum^{\infty}_{n, m=0}a_{n, m}(\lambda_1\overline{\mu_1})^n(\lambda_2\overline{\mu_2})^m$, where every $a_{n, m}$ is nonzero. We deduce that the monomials $\lambda_1^n\lambda_2^m$ form a complete orthogonal set for $\mathcal{H}_{\ell}$. Thus, $\ell$ satisfies (Q0) (and the same must be true for $s$). In view of \cite[Theorems 3.2-3.3]{Largebergmanbackground}, $\ell$ must also satisfy (Q1)-(Q3) (this is because $k$ already satisfies these properties). Finally, it is worth noting that the existence of the factor $1/(1-\lambda_1\overline{\mu}_1)(1-\lambda_2\overline{\mu}_2)$ implies that Mult$(\mathcal{H}_{\ell})=H^{\infty}(\DD^2)$. \par 
Now, define the sequence 
$$\lambda_{4j+n}=(z_{4j+n}, w_{4j+n})=\begin{cases} 
(1-2^{-j}, 1-2^{-j}), & \text{ } \text{ if } n=0;
\\ \\
(1-2^{-j}, 1-2^{-j}+i2^{-5j/4}), & \text{ } \text{ if } n=1;
\\ \\
(1-2^{-j}+i2^{-5j/4}, 1-2^{-j}), & \text{ } \text{ if } n=2;
\\ \\
(1-2^{-j}+i2^{-5j/4}, 1-2^{-j}+i2^{-5j/4}), & \text{ } \text{ if } n=3,
\end{cases}$$
  where $j\ge 1.$ Letting 
$$K((\lambda_1, \lambda_2),(\mu_1, \mu_2))=k(\lambda_1, \mu_1)+k(\lambda_2, \mu_2),$$
it can be easily verified that
$$K_{\lambda_{4j}}-K_{\lambda_{4j+1}}-K_{\lambda_{4j+2}}+K_{\lambda_{4j+3}}=0,$$ 
  for all $j\ge 1.$ Thus, \begin{equation} \label{1200} \det[\langle \hat{K}_{\lambda_{4j+n}}, \hat{K}_{\lambda_{4j+m}}\rangle]_{0\le n, m\le 3}=0, \end{equation}
  for all $j\ge 1.$ Also, let $r$ and $t$ denote the Szeg{\H o} kernels on $\DD$ and $\DD^2,$ respectively. A short calculation reveals that $$\lim_j\frac{r(1-2^{-j}, 1-2^{-j}+i2^{-5j/4})}{\sqrt{r(1-2^{-j}, 1-2^{-j})r(1-2^{-j}+i2^{-5j/4}, 1-2^{-j}+i2^{-5j/4})}}=1.$$ 
  Thus, we obtain 
  $$\lim_j\langle \hat{t}_{\lambda_{4j+n}}, \hat{t}_{\lambda_{4j+m}}\rangle =\lim_j\big[\langle \hat{r}_{z_{4j+n}}, \hat{r}_{z_{4j+m}}\rangle \cdot
  \langle \hat{r}_{w_{4j+n}}, \hat{r}_{w_{4j+m}}\rangle \big]=1,$$
  for all $n, m\in\{0, 1, 2, 3\}.$
  In view of (\ref{1200}) and the previous limit, we can write 
   $$ \det[\langle \hat{\ell}_{\lambda_{4j+n}}, \hat{\ell}_{\lambda_{4j+m}}\rangle]_{0\le n, m\le 3}$$
      \begin{equation} \label{1209}=\det[\langle \hat{K}_{\lambda_{4j+n}}, \hat{K}_{\lambda_{4j+m}}\rangle\cdot \langle \hat{t}_{\lambda_{4j+n}}, \hat{t}_{\lambda_{4j+m}}\rangle]_{0\le n, m\le 3}\to 0,
    \end{equation}
  as $j\to\infty.$ \\ Next, we prove the existence of $\epsilon>0$ such that 
  \begin{equation}\label{1210}
   d_{\ell}(\lambda_{4j+n}, \lambda_{4j+m})  >\epsilon,  
  \end{equation}
   $\text{for all }j\ge 1 \text{ and } n, m\in\{0, 1, 2, 3\}, n\neq m$. First, note that the discussion preceding Example \ref{1002} implies the existence of $\delta>0$ such that 
   \begin{equation} \label{1304} d_k(1-2^{-j}, 1-2^{-j}+i2^{-5j/4})>\delta, \hspace{0.3 cm} \forall j\ge 1.\end{equation}
Now, let $j\ge 1 \text{ and } n, m\in\{0, 1, 2, 3\},$ with $n\neq m$. Without loss of generality, we may assume that $z_{4j+n}=1-2^{-j}$ and $z_{4j+m}=1-2^{-j}+i2^{-5j/4}$ (if $z_{4j+n}=z_{4j+m}$, we would work with $w_{4j+n}$ and $w_{4j+m}$ instead). Note that $|w_{4j+n}|, |w_{4j+m}|, |z_{4j+n}|\le |z_{4j+m}| $, hence $||k_{w_{4j+n}}||, ||k_{w_{4j+m}}||, ||k_{z_{4j+n}}|| \le ||k_{z_{4j+m}}||$ ($k$ is rotationally invariant). Also, in view of  (\ref{1303}) and (\ref{1304}), we can find $f\in\mathcal{H}_k$ such that $||f||_{\mathcal{H}_k}<1/\delta$, $f(z_{4j+n})=0$ and $f(z_{4j+m})=||k_{z_{4j+m}}||.$ Define 
$$F(\lambda_1, \lambda_2)=\frac{\sqrt{||k_{z_{4j+m}}||^2+||k_{w_{4j+m}}||^2}}{||k_{z_{4j+m}}||}f(\lambda_1), \hspace{0.3 cm} (\lambda_1, \lambda_2)\in\DD^2.$$
 \cite[Theorem 5.4]{PauRa} implies that $||f||_{\mathcal{H}_K}\le ||f||_{\mathcal{H}_k}$. Thus, $$||F||_{\mathcal{H}_K}\le \frac{\sqrt{||k_{z_{4j+m}}||^2+||k_{z_{4j+m}}||^2}}{||k_{z_{4j+m}}||} ||f||_{\mathcal{H}_k}<\sqrt{2}/\delta.$$
 Also, observe that $F(\lambda_{4j+n})=0$ and 
$$F(\lambda_{4j+m})=\frac{\sqrt{||k_{z_{4j+m}}||^2+||k_{w_{4j+m}}||^2}}{||k_{z_{4j+m}}||}f(z_{4j+m})=||K_{\lambda_{4j+m}}||.$$
In view of (\ref{1303}), we obtain $d_{K}(\lambda_{4j+n}, \lambda_{4j+m})  >\delta/\sqrt{2}$. An application of Lemma \ref{944} then gives us (\ref{1210}). \\
Next, note that both $||\ell_{\lambda_{4j+n}}||, ||s_{\lambda_{4j+n}}||\to\infty$ as $j\to\infty,$ for all $n\in\{0, 1, 2, 3\}.$ Thus, there exists a subsequence $\{m_j\}$ such that $\cup_{n=0}^3\{\lambda_{4m_j+n}\}$ satisfies the Carleson measure condition for $\mathcal{H}_s$. Also, after some calculations, we can deduce the existence of $\epsilon'>0$ with the property
 \begin{equation} 
   d_{t}(\lambda_{4j+n}, \lambda_{4i+m})  >\epsilon',  
  \end{equation}
  for all $i\neq j$ and all $n, m\in\{0, 1, 2, 3\}.$ Lemma \ref{944} then implies that
  \begin{equation}\label{1211}
   d_{\ell}(\lambda_{4j+n}, \lambda_{4i+m})  >\epsilon',  
  \end{equation}
  for all $i\neq j$ and all $n, m\in\{0, 1, 2, 3\}.$ (\ref{1210}) combined with (\ref{1211}) tell us that $\cup_{n=0}^3\{\lambda_{4j+n}\}$ (and hence $\cup_{n=0}^3\{\lambda_{4m_j+n}\}$ as well) is weakly separated by $\ell.$ However, Lemma \ref{708} and (\ref{1209}) imply that $\cup_{n=0}^3\{\lambda_{4j+n}\}$ (and hence $\cup_{n=0}^3\{\lambda_{4m_j+n}\}$ as well) is not $4$-weakly separated by $\ell$. We deduce that the pair $(s, \ell)$ constitutes a counterexample to Question \ref{4}.
 \end{exmp}
\begin{remark}
The specific choice of $s$ in the previous example is not important; all that was required for the proof to go through was a CP factor $s$ of $\ell$ such that $||s_{\lambda_{4j+n}}||\to\infty$ as $j\to\infty,$ for all $n\in\{0, 1, 2, 3\}.$
\end{remark}

 \par\textit{Acknowledgements}.  I would like to thank John M\raise.5ex\hbox{c}Carthy, Michael Hartz and Brett Wick for helpful discussion. I also thank the Onassis Foundation for providing financial support for my PhD studies.

\printbibliography

\end{document}